\documentclass[11pt]{amsart} 
\usepackage[english]{babel} 
\usepackage[T1]{fontenc} 
\usepackage{graphicx} 
\usepackage{amsmath,amsthm,amssymb} 
\usepackage{mathrsfs} 
\usepackage{enumitem} 
\usepackage[all]{xy} \usepackage{tikz-cd} 
\usepackage{xcolor} 
\usepackage{hyperref}

\DeclareMathOperator{\Aut}{Aut}
\newcommand{\idgen}{\Ch(X^\infty)_{\mathrm{id-gen}}^{[3]}}
\newcommand{\affgen}{\Ch(X^\infty)_{\mathrm{aff-gen}}^{[3]}}

\newcommand{\bdinf}{X^\infty}

\newcommand{\rk}{\mathrm{rk}}

\newcommand{\bbR}{\mathbb{R}}

\newcommand{\bbQ}{\mathbb{Q}}
\newcommand{\bbZ}{\mathbb{Z}}

\newcommand{\bbG}{\mathbb{G}}

\newcommand{\bfG}{\mathbf{G}}

\newcommand{\bfM}{\mathbf{M}}
\newcommand{\bfL}{\mathbf{L}}
\newcommand{\bfS}{\mathbf{S}}

\newcommand{\SL}{\mathrm{SL}}
\newcommand{\GL}{\mathrm{GL}}

\newcommand{\Ch}{\mathrm{ch}}
\DeclareMathOperator{\Stab}{Stab}
\DeclareMathOperator{\Span}{span}

\theoremstyle{plain}
\newtheorem{thm}{Theorem}[section]

\newtheorem{cor}[thm]{Corollary}
\newtheorem{lem}[thm]{Lemma}
\newtheorem{prop}[thm]{Proposition}
\theoremstyle{definition} 
\newtheorem{Def}[thm]{Definition}
\theoremstyle{remark}
\newtheorem{rem}[thm]{Remark}
\newtheorem{remdef}[thm]{Remark/Definition}

\newtheorem{ex}[thm]{Example}

\newtheorem*{ackn}{Acknowledgements}

\title[]{The geometry of triples of antipodal ideal chambers of affine buildings}
\author{Corina Ciobotaru} \address{Department of Mathematics, Aarhus University, Denmark} \email{cociobotaru@math.au.dk}
\author{Corentin Le Bars} \address{DMA, \'Ecole Normale Sup\'erieure, PSL University, France} \email{corentin.le.bars@math.ens.psl.eu}
\date{August 5, 2026}

\begin{document}
	\begin{abstract}
		In this article, we investigate two notions of \emph{genericity} for triples of antipodal ideal chambers in a locally finite affine building \(X\): one defined at the ideal boundary \(X^{\infty}\), which we call \emph{ideal-genericity}, and the other defined from within the affine building \(X\), which we call \emph{affine-genericity}. While ideal-genericity implies affine-genericity, the latter is the more suitable notion for constructing a barycenter map associated with affine-generic triples of antipodal ideal chambers. This perspective also allows us to establish that this barycenter map is locally constant, and hence continuous. 
		
		Finally, we provide sufficient geometric conditions on the affine Weyl group associated with $X$ that guarantee both ideal- and affine-genericity for all triples of antipodal ideal chambers of $X^\infty$. These conditions yield an algorithmic method for constructing geometric configurations in $X \cup X^{\infty}$ (when they exist) that correspond to non-generic triples of antipodal ideal chambers of $X^{\infty}$. Furthermore, our computations for the irreducible finite Weyl groups of rank at least two show that automatic ideal-genericity holds for all triples of antipodal ideal chambers only in types $B_2 = C_2$, $G_2$ and $B_3$.
	\end{abstract}
	\maketitle

	\setcounter{tocdepth}{1}
	\tableofcontents

	\section{Introduction}
	
	Affine buildings  and their spherical boundaries at infinity  play a central role in geometry, representation theory, and the structure theory of groups over non-archimedean local fields. In the authors' previous joint work \cite{CiBars} on $C^\ast$-simplicity for groups acting on affine buildings, triples of opposite (also called antipodal) ideal chambers play a significant role. Motivated by this, in the present article we investigate more closely the geometric configurations formed by those triples of pairwise opposite chambers in the spherical building at infinity $X^\infty$ of a locally finite affine building $X$. Our main focus is the interplay between two natural notions of genericity for such triples: one defined at infinity and the other defined intrinsically inside the affine building.

	More precisely, given a triple of antipodal ideal chambers
	\[
	\{C_1,C_2,C_3\}\subset \mathrm{ch}(X^\infty),
	\]
	we distinguish between \emph{ideal-genericity} (see Definition \ref{def generic}), expressed in terms of the intersections of the apartments at infinity determined by the pairs $(C_i,C_j)$, and \emph{affine-genericity} (see Definition \ref{def::generic_position_1}), formulated using the geometry of the intersections of the corresponding affine apartments inside the affine building. Although the two notions are closely related, they are not equivalent in general.
	
	The first structural result of the paper is Proposition~\ref{prop::non_generic_flats}, where we show that affine non-genericity propagates simultaneously through the entire antipodal triple. More precisely, if one of the intersections of two affine apartments has no bounded boundary component, then the same phenomenon occurs for all three intersections. As a consequence, one obtains a canonical root-flat (see Definition \ref{def::affine_flat} ), well defined up to parallelism, which governs the geometric obstruction to genericity.
	
	A second main contribution is the construction of a barycenter map for affine-generic triples. Given an affine-generic antipodal triple, we associate to it a finite convex chamber subcomplex inside the affine building and define its barycenter. Proposition~\ref{prop::barycenter_map} proves that the resulting barycenter map is locally constant, hence continuous. This construction provides a geometric invariant naturally attached to affine-generic triples and suggests new connections between asymptotic geometry and convexity phenomena in affine buildings.
	
	The second part of the paper is devoted to understanding when non-generic configurations may occur. Proposition~\ref{prop antipodal non generic case}  gives a geometric criterion in terms of the affine Coxeter complex: if there is no root-flat perpendicular to a bisector, then every antipodal triple of ideal chambers is automatically ideal-generic (and therefore affine-generic). Proposition~\ref{prop:geom_criterion}
	reformulates this condition purely in terms of finite root systems and Weyl vectors, reducing the problem to the existence of proper root subspaces containing the Weyl vector.
	
	This reduction allows us to carry out an explicit case-by-case analysis of irreducible finite Weyl groups. The outcome is summarized in Theorem~ \ref{thm:summary}, which shows that among irreducible finite Weyl groups of rank at least two, the geometric condition of Proposition~\ref{prop:geom_criterion} holds precisely in the three cases
	\[
	G_2,\qquad B_2=C_2,\qquad B_3.
	\]
	In all remaining types, there exist root-flats perpendicular to bisectors, leading to explicit constructions of non-generic configurations.
	
	Finally, we use these geometric configurations to construct examples of antipodal triples which fail to be affine-generic or ideal-generic. In the algebraic setting of Bruhat--Tits buildings, these constructions also admit an interpretation in terms of stabilizers and parabolic dynamics.
	
	\section{Basic definitions}
	\label{section::opposite_ideal_ch}
	
	For the rest of the paper, we use the following notation. If $\Delta$ denotes an affine or spherical building, then $\Ch(\Delta)$ denotes the set of all chambers of $\Delta$. If $X$ is an affine building, then $X^\infty$ denotes the spherical building at infinity of $X$, or equivalently, the ideal (also called visual) boundary of $X$.
	
	\begin{Def}
		\label{def::affine_flat} 
		Let \( X \) be an affine building, and let $A$ be an affine apartment of $X$. A subset $F \subseteq A$ is called a \textbf{root-flat} of $A$ if it is the intersection of finitely many affine walls of $A$ passing through a common point, and $F$ is not a single point.
		
		We say that $F$ is a root-flat of an affine building $X$, if there is an apartment $A$ of $X$ such that $F$ is a root-flat of $A$. Using the last axiom of buildings, together with the fact that walls extend across apartments, one observes that the root-flat $F$ is independent of the choice of apartment $A$ in $X$.
	\end{Def}		
	
	Notice that being a root-flat in an affine apartment $A$ is equivalent to being an unbounded intersection of a set of affine walls of $A$. Indeed, the intersection of walls in an apartment through a point is an affine subspace, so it is either a point or unbounded.

	\begin{remdef}[Finite and infinite boundary of the intersection of two affine apartments]
			\label{rem::bounded_cone_affine_flats}
			Let \(X\) be an affine building, and let \(A_1,A_2\) be two distinct affine apartments such that $\Ch(A_1^\infty)\cap \Ch(A_2^\infty)\neq \emptyset$, and set $Q_{12}:=A_1\cap A_2$. The set \(Q_{12}\) is a convex polyhedron in the affine Coxeter complex \(A_1\), i.e. there is a finite family of affine half-apartments \(D_1,\ldots,D_N\) of \(A_1\) such that
			\[
			Q_{12}=\bigcap_{\nu=1}^N D_\nu .
			\]
			This description is not unique, but it determines a canonical finite face poset \(\mathscr F(Q_{12})\), namely the usual face poset of the convex polyhedron \(Q_{12}\). We write
			\[
			\mathscr F_\partial(Q_{12})
			:=
			\mathscr F(Q_{12})\setminus\{Q_{12}\}
			\]
			for the set of proper non-empty faces. These are precisely the faces contained in the relative boundary \(\partial Q_{12}\) of \(Q_{12}\) in $X$. 
			
			We define the finite boundary part of \(Q_{12}\) by
			\[
			\partial Q_{12}^{\mathrm{fin}}
			:=
			\bigcup \{F \mid \ F\in \mathscr F_\partial(Q_{12}), \
			F \text{ bounded}\},
			\]
			and the infinite boundary part by
			\[
			\partial Q_{12}^{\mathrm{inf}}: = \partial Q_{12}\setminus \partial Q_{12}^{\mathrm{fin}}.
			\]
		\end{remdef}

		Observe that each face of the polyhedron \(Q_{12}\) is contained in an intersection of affine walls of \(A_1\). In particular, the affine span of every positive-dimensional face is a root-flat of \(A_1\). The same definition is obtained if \(Q_{12}\) is viewed inside \(A_2\), because any apartment isomorphism \(A_1\to A_2\) fixing \(A_1\cap A_2\) pointwise preserves the face poset of \(Q_{12}\).
		
		Observe also that at least one of the sets \(\partial Q_{12}^{\mathrm{fin}}\) and \(\partial Q_{12}^{\mathrm{inf}}\) is non-empty. For example, if \(Q_{12}\) is a half-apartment, then its boundary is one unbounded face, and therefore \(\partial Q_{12}^{\mathrm{fin}}=\emptyset\). If \(Q_{12}\) is a sector, then the vertex at its tip is a bounded face, so \(\partial Q_{12}^{\mathrm{fin}}\neq\emptyset\).

	\begin{lem}
			\label{lem::affine_flat}
			Let \(A_1,A_2\) be two distinct affine apartments of an affine building \(X\), and suppose that
			\(Q_{12}:=A_1\cap A_2\) contains a sector. If
			\[
			\partial Q_{12}^{\mathrm{fin}}=\emptyset,
			\]
			then \(\partial Q_{12}^{\mathrm{inf}}\) contains a root-flat of both \(A_1\)  and \(A_2\).
		\end{lem}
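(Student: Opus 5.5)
We study the face poset of the convex polyhedron \(Q_{12}=\bigcap_{\nu=1}^N D_\nu\subset A_1\) and extract a root-flat from a minimal face.

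First, \(\partial Q_{12}\neq\emptyset\). Since \(A_1\neq A_2\) and \(A_1\cap A_2\) is convex and closed, \(Q_{12}\neq A_1\). Indeed, if \(Q_{12}=A_1\), then \(A_1\subseteq A_2\), and since apartments are isometric copies of the same Euclidean space this forces \(A_1=A_2\). Hence the description of \(Q_{12}\) uses at least one proper half-apartment, and \(\mathscr F_\partial(Q_{12})\) is non-empty.

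Next, let \(F_0\) be a minimal element of \(\mathscr F_\partial(Q_{12})\) with respect to inclusion; it exists because the face poset is finite. By the hypothesis \(\partial Q_{12}^{\mathrm{fin}}=\emptyset\), every proper face is unbounded, so \(F_0\) is unbounded. A minimal non-empty face of a polyhedron is an affine subspace: it equals the intersection of the boundary walls \(H_\nu=\partial D_\nu\) that are active on it. Indeed, if \(F_0\) were a proper subset of that intersection \(L\), it would have a relative boundary inside \(L\), and this boundary would contain a strictly smaller face of \(Q_{12}\). That smaller face is still proper, contradicting minimality. Therefore \(F_0=\bigcap_{\nu\in I}H_\nu\) for some non-empty index set \(I\). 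This is a non-empty intersection of affine walls of \(A_1\) through a common point, and it is not a single point since it is unbounded. So \(F_0\) is a root-flat of \(A_1\), and \(F_0\subseteq\partial Q_{12}\). Because every point of \(F_0\) lies in an unbounded face, in fact every point of \(\partial Q_{12}\) does, so \(F_0\subseteq\partial Q_{12}^{\mathrm{inf}}\).

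Finally, \(F_0\) is also a root-flat of \(A_2\). Take the isomorphism \(A_1\to A_2\) fixing \(Q_{12}\) pointwise. It maps walls of \(A_1\) to walls of \(A_2\) and fixes \(F_0\subset Q_{12}\), so \(F_0\) is an intersection of walls of \(A_2\). Alternatively, one can invoke the independence of apartment noted in Definition~\ref{def::affine_flat}.

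The main obstacle is the claim that the faces of \(Q_{12}\) are cut out by walls. We need each \(D_\nu\) to be bounded by a genuine affine wall of \(A_1\); this is exactly the content of Remark/Definition~\ref{rem::bounded_cone_affine_flats}, where \(Q_{12}\) is written as an intersection of affine half-apartments. We also need the minimal face to be a full affine subspace rather than merely unbounded. That step uses the standard polyhedral fact that a face with no proper subfaces has empty relative boundary. The sector hypothesis plays only a minor role: it guarantees that \(Q_{12}\) is full-dimensional with non-trivial recession cone. It is used to ensure that \(\partial Q_{12}\) really lies in \(A_1\) as a codimension-\(\geq1\) polyhedral boundary, so that minimal faces are positive-dimensional rather than the whole of \(Q_{12}\).
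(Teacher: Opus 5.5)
Your proof is correct and follows essentially the same route as the paper. Both take a minimal proper face (the paper minimizes dimension, you minimize under inclusion), show it has no proper subfaces and is therefore an affine subspace cut out by walls, and conclude it is an unbounded root-flat of \(A_1\) lying in \(\partial Q_{12}=\partial Q_{12}^{\mathrm{inf}}\); both then transfer it to \(A_2\) via the isomorphism fixing \(Q_{12}\) pointwise. The sector hypothesis enters where you say it does, through full-dimensionality of \(Q_{12}\), which is what guarantees \(\mathscr F_\partial(Q_{12})\neq\emptyset\), since a lower-dimensional affine subspace would be a proper subset of \(A_1\) with no proper faces.
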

		
		\begin{proof}
			As before we view \(Q_{12}\) as a closed convex polyhedron in \(A_1\), and consider its	finite face poset \(\mathscr F(Q_{12})\). Since \(A_1\neq A_2\), the set \(Q_{12}\) is a proper convex subset of \(A_1\), hence \(\mathscr F_\partial(Q_{12})\neq\emptyset\). Choose a face
			\[
			F\in \mathscr F_\partial(Q_{12})
			\]
			of minimal possible dimension.
			
			By assumption, \(Q_{12}\) has no bounded proper face, hence \(F\) is unbounded. We claim that \(F\) is an affine subspace. Indeed, otherwise it would have a non-empty proper face. Such a face would also be a proper face of \(Q_{12}\), contradicting the minimality of \(\dim F\). Thus \(F\) is indeed an affine subspace.
			
			Since \(F\) is a face of the polyhedron \(Q_{12}\), it is obtained as an intersection of affine walls of \(A_1\). Since \(F\) is unbounded and an affine subspace, it has positive dimension, and hence it is a root-flat of \(A_1\). Moreover \(F\subseteq \partial Q_{12}^{\mathrm{inf}}\).
			
			Finally, an apartment isomorphism \(A_1\to A_2\) fixing \(A_1\cap A_2=Q_{12}\) pointwise sends affine walls of \(A_1\) to affine walls of \(A_2\), and fixes \(F\) pointwise. Hence \(F\) is also an intersection of affine walls of \(A_2\), and thus \(F\) is a root-flat in \(A_2\) as well.
	\end{proof}
	
	There are two natural approaches to defining generic triples of ideal chambers: one is from the viewpoint at infinity, and the other is from the perspective inside the affine building.
	
	\begin{Def}\label{def generic}
		Let \( X \) be a locally finite affine building, and let \( X^\infty \) denote its spherical boundary at infinity. We say that a triple of distinct ideal chambers \( \{C_1, C_2, C_3\} \subset \Ch(X^\infty) \) is \textbf{antipodal} if the chambers \( C_1, C_2, C_3 \) are pairwise opposite.
		
		We say that the triple \( \{C_1, C_2, C_3\} \) is in \textbf{ideal-generic position} if it is antipodal and, for all \( 1 \leq i \neq j \leq 3 \), denoting by \( A_{ij} \) the unique apartment in $X$ having both  \( C_i \) and \( C_j \) as chambers in its boundary at infinity \(A_{ij}^{\infty} \), we have:
		\[
		A_{12}^{\infty} \cap  A_{13}^{\infty} \cap A_{23}^{\infty} = \emptyset.
		\]
		Let us denote by $\Ch(X^\infty)^{[3]}$ the set of triples of ideal chambers of $X$ that are antipodal and by $\idgen$ those that are in ideal-generic position.
	\end{Def}

	\begin{Def}
		\label{def::generic_position_1}
		Let \( X \) be a locally finite affine building, and let  \(\{C_1, C_2, C_3\}\)
		be an antipodal triple of ideal chambers in  $\Ch(X^\infty)$. Denote by \( A_{ij} \) the unique apartment in \( X \) whose boundary at infinity \( A_{ij}^{\infty} \) contains both \( C_i \) and \( C_j \) as chambers. For \(\{i,j,k\}=\{1,2,3\}\), define
		\[
		Q_i := A_{ij} \cap A_{ik}.
		\]
		Using the notation from Remark~\ref{rem::bounded_cone_affine_flats}, we say that the triple $\{C_1, C_2, C_3\}$ is in \textbf{affine-generic position} if 
		\[
		\partial Q_i^{\mathrm{fin}} \neq \emptyset,
		\]
		for every $i \in \{1,2,3\}$.
		Let us denote by $\affgen$ the set of antipodal triples of ideal chambers in $\Ch(X^\infty)$ that are in affine-generic position.
	\end{Def}

	\section{Sub-buildings arising from root-flats}
	\label{sec::sub_buildings_affine_flats}
	
	Let \( X \) be a locally finite affine building. Let \( \Sigma \) be an apartment of \( X \), and let \( \eta_{+} \) and \( \eta_{-} \) be two opposite ideal simplices in \( \Sigma^\infty \) that are fixed for the rest of this section. Let \( (W_I, I) \) denote the subgroup of the finite Weyl group \( (W_{\mathrm{fin}}, S_{\mathrm{fin}}) \) associated with \( X \) that stabilizes the simplex \( \eta_{+} \). The \textbf{residue} \( \operatorname{res}(\eta_{+}) \) is defined as the set of all ideal chambers in \( \Ch(X^\infty) \) that contain \( \eta_{+} \). Let \( X(\eta_{+}, \eta_{-}) \) denote the union of all apartments in \( X \) whose ideal boundaries contain both \( \eta_{+} \) and \( \eta_{-} \). Let \( n \) denote the dimension of \( X \).
	
	\begin{prop}[Section 4.3 in \cite{Rousseau2011}]
		\label{prop::res_building}
		With the notation introduced above, the union of apartments \( X(\eta_{+}, \eta_{-}) \) forms a closed, convex subset of \( X \). It is an extended, locally finite, thick affine building, and satisfies
		\[
		X(\eta_{+}, \eta_{-}) \cong \mathbb{R}^{n - |I|} \times X_I,
		\]
		where \( X_I \) is a locally finite affine building of dimension \( |I| \) and of finite Weyl group $(W_I,I)$. Moreover, the residue \( \operatorname{res}(\eta_{+}) \) is a compact subset of \( \Ch(X^\infty) \). It is a spherical building, and
		\[
		\operatorname{res}(\eta_{+}) \cong \Ch(X_I^\infty).
		\]
	\end{prop}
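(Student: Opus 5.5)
The plan is to reduce everything to the structure of a single apartment \(\Sigma\) containing \(\eta_\pm\) and then glue, following the strategy of Rousseau's Section 4.3. In \(\Sigma\), let \(V\) be the direction vector space. Let \(\Phi_I\subset\Phi\) be the roots vanishing on the face of the Weyl chamber corresponding to \(\eta_+\) (equivalently, the roots of the parabolic subsystem generated by \(I\)). Set \(V_0:=\bigcap_{\alpha\in\Phi_I}\ker\alpha\); it has dimension \(n-|I|\). We then split \(V=V_0\oplus V_I\) with \(V_I=\Span(\Phi_I^\vee)\).

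Walls of \(\Sigma\) whose direction is \(\ker\alpha\) with \(\alpha\in\Phi_I\) are exactly the walls containing the direction \(V_0\), i.e.\ invariant under translations by \(V_0\). So \(\Sigma\) with only these walls is \(V_0\times\Sigma_I\), where \(\Sigma_I\) is an affine Coxeter complex for the affine Weyl group built on \((W_I,I)\).

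\textbf{Step 1: convexity and closedness.} A point \(x\in X\) lies in \(X(\eta_+,\eta_-)\) iff the geodesic lines through \(x\) with endpoints in \(\eta_+\) and \(\eta_-\) (for interior points of the simplices) exist. Equivalently, \(x\) lies on a flat of dimension \(n-|I|\) parallel to the flat \(F_0\subset\Sigma\) spanned by \(\eta_\pm\). Thus \(X(\eta_+,\eta_-)\) is the parallel set \(P(F_0)\) of a flat in the CAT(0) space \(X\), which is closed and convex and splits isometrically as \(F_0\times Y\), with \(F_0\cong\mathbb{R}^{n-|I|}\) (standard CAT(0) flat strip and product decomposition theorem).

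We must also check the union-of-apartments description. Any point of \(P(F_0)\) lies on a parallel flat \(F\), and \(F^\infty\supset\eta_\pm\). Given two opposite simplices at infinity and a point \(x\), the building axioms (existence of an apartment containing a sector face germ and an opposite sector face, via retraction arguments) give an apartment containing \(F\) and chambers through \(\eta_\pm\).

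\textbf{Step 2: the building structure on \(Y\).} Define \(X_I:=Y\) and equip it with apartments \(A\cap Y\), i.e.\ \(A/V_0\), for \(A\) an apartment in \(X(\eta_+,\eta_-)\), with walls induced by the \(\Phi_I\)-walls. The remaining walls of \(A\) are transverse to \(V_0\) and are simply forgotten; this is what makes the structure "extended". We verify the axioms for \(X_I\):
\begin{enumerate}[label=(\roman*)]
\item Any two points (or chambers) of \(X_I\) lie in a common apartment. Lift them to \(X(\eta_+,\eta_-)\) and use the fact that a chamber germ together with the sector face germ of \(\eta_+\) lies in an apartment containing \(\eta_-\).
\item The intersection of two apartments is a convex union of \(\Phi_I\)-enclosures, with an isomorphism fixing it. The isomorphisms of \(X\)-apartments fixing \(A\cap A'\) fix \(\eta_\pm\), hence commute with \(V_0\)-translations and descend.
\end{enumerate}
Thickness and local finiteness descend from those of \(X\), since panels of \(X_I\) correspond to \(\Phi_I\)-panels of \(X\). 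Thickness is where I expect trouble: one must produce three half-apartments along a \(\Phi_I\)-wall, all containing \(\eta_\pm\) at infinity. Here I would use that the wall's boundary contains \(\eta_\pm\), so the branching half-apartments from \(X\) all contain them.

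\textbf{Step 3: the residue.} Map \(C\in\operatorname{res}(\eta_+)\) to its projection: choose an apartment containing \(C\) and \(\eta_-\). It lies in \(X(\eta_+,\eta_-)\), and the image of \(C\) in \(A^\infty/\) (link of \(\eta_+\)) identifies with a chamber of \(X_I^\infty\), the \(\eta_+\)-side of \(\Sigma_I^\infty\). This is well defined, because two such apartments meet in a set containing a sector germ toward \(C\). It is bijective by Step 2(i), and it preserves adjacency and opposition. Hence \(\operatorname{res}(\eta_+)\cong\Ch(X_I^\infty)\), which is a spherical building.

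Compactness holds because \(\operatorname{res}(\eta_+)\) is closed in \(\Ch(X^\infty)\), and \(\Ch(X^\infty)\) is compact for the cone topology by local finiteness (it is an inverse limit of finite sets of sector germs at a base point). Closedness follows since containing \(\eta_+\) is detected on finite-level germs.

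The main obstacle is Step 2(i)/(ii), namely showing that the induced apartment system of \(X_I\) satisfies the exchange and isomorphism axioms. I would import these directly from Rousseau's treatment of façades rather than reprove them.
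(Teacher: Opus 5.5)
\paragraph{Comparison with the paper.}
The paper gives no argument for this statement; it is quoted from Rousseau, \S4.3. Your outline therefore goes beyond what the paper provides. At the one genuinely hard point, namely that the projected apartments make \(X_I\) an affine building, it rests on the same source, so relative to the paper nothing is missing.

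Where you do argue, your route differs from the combinatorial treatment in the cited source:
\begin{itemize}
\item You get closedness, convexity and the splitting \(F_0\times Y\) from CAT(0) parallel-set theory. This goes through after identifying \(X(\eta_+,\eta_-)\) with the parallel set of a line whose endpoints lie in the open simplices \(\eta_\pm\). That identification uses that every geodesic line lies in an apartment, i.e.\ the complete apartment system, which the paper needs anyway.
\item You get thickness by gluing half-apartments along walls whose ideal boundary contains \(\eta_\pm\). This is correct.
\end{itemize}
Your route makes the metric assertions cheap and independent of the building structure on \(X_I\). The combinatorial construction is what actually produces the apartment system of \(X_I\), and with it the identification \(\operatorname{res}(\eta_+)\cong\Ch(X_I^\infty)\).

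\paragraph{The justification in Step 2(i) should be dropped.}
It would not work. The asserted fact is false. For example, let \(X\) be a tree and \(\eta_\pm\) two ends joined by the line \(\ell\). An edge at a vertex of \(\ell\) but not on \(\ell\) lies in no apartment having both \(\eta_\pm\) at infinity.

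Even restricted to chamber germs inside \(X(\eta_+,\eta_-)\), the fact only places one point in an apartment. What (i) needs is a single apartment with \(\eta_\pm\) at infinity that contains both parallel flats through given points \(x,x'\in X(\eta_+,\eta_-)\). For \(X=T\times T'\) with \(\eta_\pm\) two ends of \(T\), this is exactly the statement that any two points of \(T'\) lie on a common line. That is precisely the content you import from Rousseau, and your sketch cannot replace it.

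\paragraph{Two smaller points in Step 3.}
First, bijectivity does not follow from (i). Injectivity comes from the product structure. Two apartments of \(X(\eta_+,\eta_-)\) whose projections share a sector \(\mathcal Q\) of \(X_I\) both contain \(F_0\times\mathcal Q\). Within one apartment, exactly one chamber containing \(\eta_+\) projects onto a given ideal chamber of the projected apartment.

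Second, preservation of adjacency and opposition needs an apartment containing \(C,C'\in\operatorname{res}(\eta_+)\) together with \(\eta_-\). To get it, choose \(E\supseteq\eta_-\) with \(\operatorname{proj}_{\eta_+}E=C\); this is possible because projections between residues of opposite simplices are bijective. By the gate property, \(C\) then lies on a minimal gallery from \(E\) to \(C'\), so any apartment containing \(E\) and \(C'\) contains \(C\).
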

	
	When \( \eta_{+} \) and \( \eta_{-} \) are two opposite chambers in \(\Sigma^\infty\), we have \( X(\eta_{+}, \eta_{-}) \cong \mathbb{R}^{n} \cong \Sigma \), and \( X_I \) reduces to a point. Note that the Euclidean factor \( \mathbb{R}^{n - |I|} \) in the splitting of \( X(\eta_{+}, \eta_{-}) \) contains the simplices \( \eta_{+} \) and \( \eta_{-} \) in its boundary at infinity. Moreover, any flat of minimal dimension in \( X \) whose ideal boundary contains \( \eta_{+} \) and \( \eta_{-} \) is isomorphic to \( \mathbb{R}^{n - |I|} \).
	
	\begin{rem}
		\label{rem::apartments_panel_building}
		From the construction of $X(\eta_{+},\eta_{-}) \cong \mathbb{R}^{\,n - |I|} \times X_I$, it follows that the affine apartments of $X_I$  are exactly in bijection with the apartments of $X$ whose ideal boundaries contain both $\eta_+, \eta_-$ (thus the apartments in $X(\eta_{+},\eta_{-})$. Moreover, it follows that the affine walls of $X_I$ are in bijection with the projections of the affine walls of $X$ (i.e. after quotienting out the $\mathbb{R}^{\,n - |I|}$ factor), and thus of $X(\eta_{+},\eta_{-})$, whose ideal boundaries contain $\eta_{+}$ and $\eta_{-}$. The same holds true for root-flats in $X_I$.
	\end{rem}
	
	\begin{lem}
		\label{lem::opposition_projection}
		With the notation fixed in this section, let $A$ be an apartment of
		$X(\eta_{+},\eta_{-})$, and let $C_{+}, C_{-} \in \Ch(A^{\infty})$ be two opposite
		ideal chambers. Consider the projection
		\[
		p_I : X(\eta_{+},\eta_{-})
		\cong \mathbb{R}^{\,n - |I|} \times X_{I}
		\longrightarrow
		X_{I},
		\]
		and set $\mathcal{A} := p_I(A)$.  Then:
		\begin{enumerate}
			\item The projection $p_I(A)=:\mathcal A$ is an apartment of $X_I$, and $(p_I(A))^\infty=:\mathcal A^\infty$ is an apartment of $X_I^\infty$.
			\item The interiors of $C_+$ and $C_-$ project by the map $p_I$ to the interiors of two chambers $\mathcal C_+,\mathcal C_-\in\Ch(\mathcal A^\infty)$.
			\item These two chambers $\mathcal C_+$ and $\mathcal C_-$ are opposite in $X_I^\infty$.
			\item For any chambers $\mathcal C_1,\mathcal C_2\in\Ch(X_I^\infty)$, there exists an apartment $A_{12}$ of $X(\eta_+,\eta_-)$ such that $p_I(A_{12})$ is an apartment of $X_I$ whose ideal boundary contains $\mathcal C_1$ and $\mathcal C_2$.  If $\mathcal C_1$ and $\mathcal C_2$ are opposite, then $A_{12}$ is unique.
			\item More generally, the interiors of two opposite chambers of $X(\eta_+,\eta_-)^\infty$ project by the map $p_I$ to the interiors of two opposite chambers in $\Ch(X_I^\infty)$.
		\end{enumerate}
	\end{lem}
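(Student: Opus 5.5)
The plan is to work inside a single apartment first, where everything is a statement about Euclidean Coxeter complexes and their spherical boundaries, and then pass to the general case using Proposition~\ref{prop::res_building} and Remark~\ref{rem::apartments_panel_building}.

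Fix an apartment $A$ of $X(\eta_+,\eta_-)$. By Remark~\ref{rem::apartments_panel_building}, apartments of $X(\eta_+,\eta_-)$ correspond bijectively to apartments of $X_I$. Under the splitting $\mathbb{R}^{n-|I|}\times X_I$, the apartment $A$ is a product $\mathbb{R}^{n-|I|}\times \mathcal A$. This gives (1) at the level of $X_I$. For the boundary statement, note that $\mathcal A^\infty$ is a round sphere of dimension $|I|-1$. Its simplicial structure comes from the walls of $\mathcal A$, which are the projections of the walls of $A$ containing $\eta_\pm$ at infinity. Hence it is an apartment of $X_I^\infty$ with Weyl group $W_I$.

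For (2) I will use explicit linear algebra in $A\cong\mathbb{R}^n$. Write $V=\mathbb{R}^n=E\oplus E^\perp$, where $E$ is the linear span of the direction of the Euclidean factor, i.e. the intersection of the walls through $\eta_\pm$. The map $p_I$ on $A$ is then the orthogonal projection onto $E^\perp$. The interior of an ideal chamber $C_+$ is the set of directions of an open Weyl cone $\{v : \alpha(v)>0\ \forall \alpha\in\Phi^+\}$ for some choice of positive system $\Phi^+$. The roots of $W_I$ are $\Phi_I=\{\alpha\in\Phi : \alpha|_E=0\}$. For $v\in E^\perp$ we have $\alpha(v)=\alpha(\pi v)$ when $\alpha\in\Phi_I$, where $\pi$ is the orthogonal projection onto $E^\perp$. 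Therefore $\pi$ maps the open cone $\{\alpha>0,\ \alpha\in\Phi^+\}$ into the open cone $\{\alpha>0,\ \alpha\in\Phi^+\cap\Phi_I\}$, and $\Phi^+\cap\Phi_I$ is a positive system of $\Phi_I$. One also has to check that $\pi v\neq 0$. This holds because $v$ lies in an open chamber and therefore in no wall, so $v\notin E$. (This uses $I\neq\emptyset$; if $I=\emptyset$ then $X_I$ is a point and the statement is vacuous.) The image is open in $E^\perp$, so it is the interior of a chamber $\mathcal C_+$ of $\mathcal A^\infty$. For (3), $C_-$ corresponds to $-\Phi^+$, which gives $-(\Phi^+\cap\Phi_I)$ and hence the opposite chamber in $\mathcal A^\infty$. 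Since $\mathcal A$ is an apartment, opposition in $\mathcal A^\infty$ is the same as opposition in $X_I^\infty$. Statement (5) is exactly (2)+(3), applied to an apartment of $X(\eta_+,\eta_-)$ containing both chambers. That such an apartment exists follows from the building axiom in the extended building $X(\eta_+,\eta_-)$, since any two ideal chambers lie in a common apartment.

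For (4), take an apartment $\mathcal B$ of $X_I$ whose boundary contains $\mathcal C_1$ and $\mathcal C_2$; it exists by the building axiom at infinity of $X_I$. Let $A_{12}:=p_I^{-1}(\mathcal B)=\mathbb{R}^{n-|I|}\times\mathcal B$. By Remark~\ref{rem::apartments_panel_building} this is an apartment of $X(\eta_+,\eta_-)$ projecting to $\mathcal B$. For uniqueness when $\mathcal C_1$ and $\mathcal C_2$ are opposite: the apartment $\mathcal B$ is unique in $X_I$, because opposite chambers at infinity determine a unique apartment in a locally finite affine building. The bijection of Remark~\ref{rem::apartments_panel_building} then makes $A_{12}$ unique.

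The main obstacle is the identification of the root subsystem $\Phi_I$ with the walls through $\eta_\pm$, together with the compatibility of $p_I$ with the orthogonal projection in each apartment. That is, I must know that the isomorphism of Proposition~\ref{prop::res_building} restricts on each apartment to this linear splitting. I would extract this from Rousseau's construction, where the factor $\mathbb{R}^{n-|I|}$ is the span of the directions of $\eta_\pm$.
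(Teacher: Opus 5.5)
Your proposal is correct and follows the paper's proof. In both, the walls through the Euclidean factor cut $A$ into cones $E\times(\text{open chamber of }\mathcal A)$, opposite ideal chambers lie in opposite cones, and (4) comes from $\mathbb{R}^{n-|I|}\times\mathcal B$ together with uniqueness in $X_I$. Your identity $\alpha(v)=\alpha(\pi v)$ for $\alpha\in\Phi_I$, combined with the positive system $\Phi^+\cap\Phi_I$, is a sharper version of the paper's cone picture; note that the identity holds for all $v\in V$, not only for $v\in E^\perp$ as you wrote.

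One correction: ``the image is open, so it is the interior of a chamber'' does not follow, and it is false in general. In $A_3$ with $E=\mathbb{R}(1,1,-1,-1)$, the chamber $x_1>x_3>x_4>x_2$ projects onto $\{w_1-w_2>w_3-w_4>0\}$. This is a proper subcone of the $\Phi_I$-chamber $\{w_1>w_2,\ w_3>w_4\}$. So (2) must be read as ``into the interior of a unique chamber''. That is exactly what your containment argument proves, and it is all the paper uses later.
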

	
	\begin{proof}
		By Proposition~\ref{prop::res_building} and Remark~\ref{rem::apartments_panel_building}, every apartment of $X(\eta_+,\eta_-)$ is of the form
		\[
		A=\mathbb{R}^{\,n - |I|}\times\mathcal A
		\]
		for a unique affine apartment $\mathcal A$ of $X_I$, and the projection $p_I:\mathbb{R}^{\,n - |I|}\times X_I\to X_I$ sends $A$ onto $\mathcal A$. Hence $p_I(A)=\mathcal A$ and $(p_I(A))^\infty=\mathcal A^\infty$. This proves (1).
		
		Let us prove (2) and (3).  From the same remark, we recall that in the apartment \(A\) of \(X(\eta_{+},\eta_{-})\), we retain only those affine walls of \(A\) whose ideal boundaries contain both \(\eta_{+}\) and \(\eta_{-}\). These are precisely the walls that project to the affine walls in the apartment \(\mathcal{A}\) of the affine building \(X_I\).
		
		Moreover, in the spherical apartment \(A^\infty\), any two opposite chambers are separated by the walls of that spherical apartment. Viewing this configuration from the corresponding affine apartment \(A\), the two opposite ideal chambers \(C_{+}\) and \(C_{-}\) are therefore separated, in particular, by each affine wall of \(A\) whose ideal boundary contains \(\eta_{+}\) and \(\eta_{-}\).
		
		In addition, let \(F\) be any root-flat in \(A\) that contains \(\eta_{+}\) and \(\eta_{-}\) in its ideal boundary and has minimal
		dimension with this property. Since \(F\) is, by definition, a root-flat and \(X\) is locally finite, the number of affine walls of \(A\)
		containing \(F\) is finite. Moreover, these affine walls, whose collection is denoted by \(\mathcal{H}(A,F)\), partition the apartment \(A\) into a finite collection of convex cones, all of which intersect in the affine space \(F\).
		
		Each ideal chamber of \(A^\infty\) lies in the ideal boundary of exactly one of these cones. Because \(C_{+}\) and \(C_{-}\) are opposite by assumption, the cones whose ideal boundaries contain \(C_{+}\) and \(C_{-}\) must themselves be opposite. Indeed, these cones must be defined by the same affine walls of \(\mathcal{H}(A,F)\) in order to separate the opposite ideal chambers \(C_{+}\) and \(C_{-}\).
		
		Projecting the cones in \(A \cup A^{\infty}\) determined by the family \(\mathcal{H}(A,F)\), as well as the affine walls in \(\mathcal{H}(A,F)\) themselves, to \(\mathcal{A} \cup \mathcal{A}^{\infty}\) (that is,
		quotienting out the factor \(F \cong \mathbb{R}^{n - |I|}\)) preserves the property of being an affine wall or a root-flat, and, as a consequence, preserves opposition as well.  
		
		We observe that the family \(\mathcal{H}(A,F)\) does not contain a representative from every parallelism class of affine walls in \(A\). This is because the finite Weyl group \((W_I, I)\) associated with \(X_I\) is a subgroup of the finite Weyl group \((W_{\mathrm{fin}}, S_{\mathrm{fin}})\) of \(X\), where \(I \subsetneq S\). On the one hand, this implies that each facet (i.e., each codimension-one simplex) of an ideal chamber of \(\mathcal{A}^{\infty}\) arises as the projection obtained by quotienting out the factor \(F \cong \mathbb{R}^{n - |I|}\) of some affine wall in \(\mathcal{H}(A,F)\). On the other hand, it also implies that the cones in \(A\) determined by \(\mathcal{H}(A,F)\) are wider than the usual Weyl sectors in \(X\) coming from $(W_{\mathrm{fin}}, S_{\mathrm{fin}})$.  
		
		Consequently, when projecting \(C_{+}\) and \(C_{-}\), together with their cones determined by \(\mathcal{H}(A,F)\) that uniquely contain them, respectively, to the ideal boundary \(X_I^{\infty}\) of \(X_I\), their images lie, in the interiors of, and respectively give exactly, two ideal chambers \(\mathcal{C}_{+}\) and \(\mathcal{C}_{-}\) of \(\mathcal{A}^{\infty}\), respectively. Moreover, since the cones determined by \(\mathcal{H}(A,F)\) that uniquely contain \(C_{+}\) and \(C_{-}\), respectively, are opposite, so should be the two ideal chambers \(\mathcal{C}_{+}\) and \(\mathcal{C}_{-}\) in \(\mathcal{A}^{\infty}\), thus in $X_I^{\infty}$. Thus (2) and (3) are proven. The same arguments prove (5), after choosing an apartment of $X(\eta_+,\eta_-)$ containing the two projected chambers.

%Finally, let $\mathcal C_1,\mathcal C_2\in\Ch(X_I^\infty)$. Since $X_I^\infty$ is a spherical building, there is an apartment $\mathcal A_{12}^\infty\subset X_I^\infty$ containing both $\mathcal C_1,\mathcal C_2$. Equivalently, there is an affine apartment $\mathcal A_{12}\subset X_I$ with $\mathcal C_1,\mathcal C_2\in\Ch(\mathcal A_{12}^\infty)$. Then, by  Remark~\ref{rem::apartments_panel_building}, $A_{12}:=\mathbb{R}^{\,n - |I|}\times\mathcal A_{12}$ is an apartment of $X(\eta_+,\eta_-)$, proving existence. If $\mathcal C_1$ and $\mathcal C_2$ are opposite, the apartment $\mathcal A_{12}$ is unique, and the bijection $\mathcal A\mapsto E\times\mathcal A$ gives uniqueness of $A_{12}$. This proves (4).
		
		Let us now prove (4), which serves as a converse to parts (2) and (3).  
		Since $X_{I}$ is an affine building, standard results guarantee the existence of an apartment $\mathcal{A}_{12}$ in $X_{I}$ whose ideal boundary $\mathcal{A}_{12}^{\infty}$ contains the chambers $\mathcal{C}_{1}$ and $\mathcal{C}_{2}$.  
		Define
		\[
		A_{12} := \mathbb{R}^{\,n - |I|} \times \mathcal{A}_{12}.
		\]
		By Remark~\ref{rem::apartments_panel_building}, this is an apartment of 
		$X(\eta_{+},\eta_{-})$.
		
		Inside the apartment $\mathcal{A}_{12}$, choose Weyl sectors  $\mathcal{Q}_{1}$ and $\mathcal{Q}_{2}$ based at a common vertex $x \in \mathcal{A}_{12}$ and having ideal boundaries $\mathcal{C}_{1}$ and $\mathcal{C}_{2}$, respectively.  
		Then the sets
		\[
		Q_{1} := \mathbb{R}^{\,n - |I|} \times \mathcal{Q}_{1},
		\qquad
		Q_{2} := \mathbb{R}^{\,n - |I|} \times \mathcal{Q}_{2}
		\]
		are cones in the apartment $A_{12}$, and
		\(
		F := \mathbb{R}^{\,n - |I|} \times \{x\}
		\)
		is a root-flat of $A_{12}$ whose ideal boundary contains 
		$\eta_{+}$ and $\eta_{-}$.  Moreover, the cones $Q_{1}$ and $Q_{2}$ are precisely the cones determined by the family $\mathcal{H}(A,F)$ (as defined in the proof of parts (2) and (3)), which separates the apartment $A_{12}$. 
		
		As in the proof of parts (2) and (3), the fact that the chambers $\mathcal{C}_{1}$ and $\mathcal{C}_{2}$ are opposite - and hence that the corresponding Weyl sectors $\mathcal{Q}_{1}$ and $\mathcal{Q}_{2}$ based at the same point $x$ are opposite - is equivalent to the opposition of the cones $Q_{1}$ and $Q_{2}$ in the apartment $A_{12}$. In particular, this is equivalent to the opposition of the ideal boundaries of $Q_1$ and $Q_2$. The uniqueness of the apartment $A_{12}$ follows immediately from the uniqueness of the apartment $\mathcal{A}_{12}$ in $X_{I}$ when $\mathcal{C}_{1}$ and 
		$\mathcal{C}_{2}$ are opposite, together with the way $A_{12}$ is constructed from $\mathcal{A}_{12}$. Part (4) is proven, and so is the lemma.
	\end{proof}

	\section{Non-generic antipodal triples and their associated root-flats}
	In the next proposition, we say that a root-flat \( F_1 \) is parallel to a root-flat \( F_2 \) if \( \dim F_1 \leq \dim F_2 \) and there exists a root-flat in \( F_2 \) whose ideal boundary is exactly the ideal boundary of \( F_1 \).
	
	\begin{prop}
		\label{prop::non_generic_flats}
		Let $X$ be a locally finite affine building, and let $\{C_1, C_2, C_3\}$ be an antipodal triple of chambers in $X^\infty$.
		Suppose that there exists $i \in \{1,2,3\}$ such that  $\partial Q_{i}^{\mathrm{fin}} = \emptyset$.
		Then for each $j \in \{1,2,3\}$ we have $\partial Q_{j}^{\mathrm{fin}} = \emptyset$.
		Moreover, there exists a root-flat of $X$ that is parallel to a root-flat in $\partial Q_{k}^{\mathrm{inf}}$ for every $k \in \{1,2,3\}$, and
		\[
		A_{12}^{\infty} \cap A_{13}^{\infty} \cap A_{23}^{\infty} \neq \emptyset.
		\]
	\end{prop}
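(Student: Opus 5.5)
Without loss of generality take $i=1$, so $Q_1=A_{12}\cap A_{13}$ contains a Weyl sector with ideal boundary $C_1$ (two apartments sharing the ideal chamber $C_1$ share a subsector), and $\partial Q_1^{\mathrm{fin}}=\emptyset$. By Lemma~\ref{lem::affine_flat}, $\partial Q_1^{\mathrm{inf}}$ contains a root-flat $F$ of both $A_{12}$ and $A_{13}$. I will take $F$ of minimal dimension among the proper faces of $Q_1$, so that $F$ is an affine subspace and every face of $Q_1$ contains a translate of $F$. The argument then has three steps.

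\emph{Step 1: $F^\infty$ lies in all three apartments at infinity.} Since $Q_1$ is a convex polyhedron all of whose proper faces are unbounded, its lineality space is non-trivial: the recession cone of a polyhedron without vertices contains a line. Hence $Q_1$ is invariant under translations along $F$, and $Q_1\cong F\times Q_1'$ with $Q_1'$ a pointed polyhedron. Thus $F^\infty\subset A_{12}^\infty\cap A_{13}^\infty$, and $F^\infty$ is a sphere, so it contains a pair of opposite simplices $\eta_+,\eta_-$, namely the ideal simplices spanned by $F^\infty$. I then claim $F^\infty\subset A_{23}^\infty$. Both $A_{12}$ and $A_{13}$ lie in $X(\eta_+,\eta_-)$, and $C_1,C_2,C_3$ all contain faces of type compatible with $\eta_\pm$. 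Using the product structure of Proposition~\ref{prop::res_building}, since $Q_1$ splits off $F$ the chambers $C_2$ and $C_3$ must both lie in $X(\eta_+,\eta_-)^\infty$. Indeed, $A_{12}$ contains $C_2$ and is a product $F$-direction $\times$ something, so $C_2$ lies in the boundary of $X(\eta_+,\eta_-)$, and likewise $C_3$. Then $C_2$ and $C_3$ are opposite chambers in $X(\eta_+,\eta_-)^\infty$, and by Lemma~\ref{lem::opposition_projection}(4) the unique apartment containing them lies in $X(\eta_+,\eta_-)$. Therefore $A_{23}\supset$ a flat parallel to $F$, so $\eta_\pm\in A_{23}^\infty$. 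This gives $A_{12}^\infty\cap A_{13}^\infty\cap A_{23}^\infty\supseteq F^\infty\ne\emptyset$.

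\emph{Step 2: propagation to $Q_2$ and $Q_3$.} Project everything by $p_I$ to $X_I$. By Lemma~\ref{lem::opposition_projection}, the $C_k$ project to pairwise opposite chambers $\mathcal C_k$, and the $A_{jk}$ project to the apartments $\mathcal A_{jk}$. The intersections satisfy $Q_k=\mathbb R^{n-|I|}\times\mathcal Q_k$ with $\mathcal Q_k=\mathcal A_{kj}\cap\mathcal A_{kl}$. Any such product has no bounded proper face, because every face contains a translate of the $\mathbb R^{n-|I|}$ factor. Hence $\partial Q_k^{\mathrm{fin}}=\emptyset$ for all $k$. The root-flat $F$, or more precisely any root-flat of $X$ of the form $\mathbb R^{n-|I|}\times\{x\}$, is parallel to a minimal face of each $Q_k$: each minimal face of $Q_k$ is $\mathbb R^{n-|I|}\times\{\text{vertex of }\mathcal Q_k\}$, at least when $\mathcal Q_k$ has a vertex. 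If not, repeat the argument with the larger lineality space, which only enlarges the flat contained in each face. This establishes the parallelism statement.

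\emph{Main obstacle.} The delicate point is Step 1, namely showing that $C_2$ and $C_3$ lie in $X(\eta_+,\eta_-)^\infty$, equivalently that $A_{23}$ contains a flat parallel to $F$. I would argue via Busemann/retraction: $A_{12}$ and $A_{13}$ each contain a flat parallel to $F$ through every point of $Q_1$, and $A_{12}$ contains the full geodesic lines in direction $F$. A sector in $A_{12}$ toward $C_2$ together with its $F$-translates then shows that $C_2$ contains a face compatible with $\eta_\pm$ being in $A_{12}^\infty$. Combining this with the uniqueness statement of Lemma~\ref{lem::opposition_projection}(4) yields $A_{23}\subset X(\eta_+,\eta_-)$. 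Should this fail, I would instead work in the spherical building $X^\infty$: $F^\infty\subset A_{12}^\infty\cap A_{13}^\infty$ is a subsphere fixed by the opposition involutions. Convexity of $A_{12}^\infty\cap A_{13}^\infty\cap A_{23}^\infty$-type arguments with the projection $\mathrm{proj}_{\eta_\pm}$ of $C_2,C_3$ would then show that the projections are opposite in $\operatorname{res}(\eta_+)$, forcing $A_{23}^\infty\ni\eta_\pm$.
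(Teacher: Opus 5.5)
Your proposal is essentially the paper's proof: a root-flat from Lemma~\ref{lem::affine_flat}, the splitting $X(\eta_+,\eta_-)\cong\mathbb{R}^m\times X_I$, Lemma~\ref{lem::opposition_projection}(4) to put $A_{23}$ inside $X(\eta_+,\eta_-)$, and then $Q_k=\mathbb{R}^m\times\mathcal{Q}_k$ has only unbounded faces. Your lineality-space observation is a cleaner substitute for the paper's claim that all maximal-dimensional root-flats in $\partial Q_1^{\mathrm{inf}}$ are parallel.

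The step you flag as the main obstacle is exactly where the paper also just cites Lemma~\ref{lem::opposition_projection}(4), and it closes without your fallbacks. The ideal boundary of $\mathbb{R}^m\times\mathcal{A}_{23}$ is the spherical join $F^\infty*\mathcal{A}_{23}^\infty$. It therefore contains $C_2\subset F^\infty*\overline{\mathcal{C}_2}$ and $C_3\subset F^\infty*\overline{\mathcal{C}_3}$ (by Lemma~\ref{lem::opposition_projection}(2)), so this apartment is $A_{23}$ by uniqueness.
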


	\begin{proof}
		Assume without loss of generality that $i = 1$ and that $\partial Q_{1}^{\mathrm{fin}} = \emptyset$. By applying Lemma~\ref{lem::affine_flat} to the apartments $A_{12}$ and $A_{13}$, we deduce that $\partial Q_{1}^{\mathrm{inf}}$ contains a root-flat.  Choose such a root-flat of maximal dimension and denote it by $F_1$.  Moreover, we may assume that the dimension of $F_1$ is maximal among the dimensions of all root-flats contained in $\partial Q_{i}^{\mathrm{inf}}$ 
		for any $i \in \{1,2,3\}$.
		
We claim that any other root-flat of maximal dimension contained in \(\partial Q_{1}^{\mathrm{inf}}\), if such a root-flat exists, is parallel to \(F_1\) (in the sense defined just before the proposition). Indeed, let \(F_2\) be such a root-flat. Since both \(F_1\) and \(F_2\) are root-flats contained in \(\partial Q_{1}^{\mathrm{inf}}\), they cannot intersect. Indeed, if they did, then by the geodesic convexity of \(Q_1\) and of the unbounded components of \(\partial Q_{1}^{\mathrm{inf}}\), either they would generate a root-flat of strictly larger dimension, contradicting the maximality assumption of $F_1$, or they would fail to remain root-flats contained in \(\partial Q_{1}^{\mathrm{inf}}\).

Consider now the finite set $\mathcal{W}$ of all affine walls in $A_{12}$ such that
\[
F_1=\bigcap_{H\in\mathcal{W}} H.
\]
For each $H\in\mathcal{W}$, let $D_H$ be the half-apartment of $A_{12}$ containing $Q_1 \cup \partial Q_1$. Then $F_2 \subset Q_1 \cup \partial Q_1 \subset \bigcap_{H\in\mathcal{W}} D_H$. Hence, $F_2$ is contained in $D_H$ for every affine wall $H\in\mathcal{W}$. It follows from standard Euclidean geometry that $F_2$ is parallel to $H$ for every $H\in\mathcal{W}$. Since we are working in a Euclidean space, $F_1$ and $F_2$ are disjoint, and since
\[
F_1=\bigcap_{H\in\mathcal{W}} H,
\]
we conclude that $F_2$ is parallel to $F_1$.

		Given the root-flat $F_1$ in $\partial Q_{1}^{\mathrm{inf}}$ of dimension $m$, chosen as at the beginning of the proof, we may apply 
		Proposition~\ref{prop::res_building} to the set $X(F_1)$ of all affine apartments of $X$ that contain root-flats parallel to $F_1$ and of the 
		same dimension as $F_1$. Then
		\[
		X(F_1) \cong \mathbb{R}^m \times X_{F_1},
		\]
		where $\mathbb{R}^m \cong F_1$ and $X_{F_1}$ is a locally finite affine building of dimension $n - m$. One may view $X_{F_1}$ as a transversal section of $X(F_1)$ that is ``perpendicular'' to the root-flats parallel to $F_1$.
		
		Notice that $A_{12}$ and $A_{13}$ are apartments of $X(F_1)$, and therefore determine two affine apartments $\widetilde{A}_{12}$ and $\widetilde{A}_{13}$ of $X_{F_1}$. Moreover, the affine walls of $X_{F_1}$ arise from the affine walls of $X$ that contain root-flats parallel to $F_1$ and of the same dimension as $F_1$.
		
		Since $C_1, C_2 \in \Ch(A_{12}^{\infty})$ and $C_1, C_3 \in \Ch(A_{13}^{\infty})$, their projections to the transversal section $X_{F_1} \cup X_{F_1}^{\infty}$ yield subsets $\widetilde{C}_1, \widetilde{C}_2, \widetilde{C}_3$ of three ideal chambers of $X_{F_1}^{\infty}$, denoted $\mathcal{C}_1, \mathcal{C}_2, \mathcal{C}_3$. These subsets $\widetilde{C}_1, \widetilde{C}_2, \widetilde{C}_3$ have the same dimensions as the ideal chambers  $\mathcal{C}_1, \mathcal{C}_2, \mathcal{C}_3 \in \Ch(X_{F_1}^{\infty})$. Moreover, by Lemma \ref{lem::opposition_projection}(2) and (3), the chambers 
		$\mathcal{C}_1, \mathcal{C}_2, \mathcal{C}_3$ still form an antipodal triple in $ \Ch(X_{F_1}^{\infty})$.
		
		This implies that there exist unique apartments $\mathcal{A}_{12}, \mathcal{A}_{13}, \mathcal{A}_{23}$ in $X_{F_1}$ such that 
		$\mathcal{C}_i, \mathcal{C}_j \in \Ch(\mathcal{A}_{ij})$ for every $i \neq j \in \{1,2,3\}$. Define
		\[
		\mathcal{Q}_i := \mathcal{A}_{ij} \cap \mathcal{A}_{ik},
		\qquad \text{for } i \neq j \neq k \in \{1,2,3\}.
		\]
		By the choice of $F_1$ as having maximal dimension among all possible root-flats contained in $\partial Q_1$, we must have
		\[
		\partial \mathcal{Q}_1^{\mathrm{fin}} \neq \emptyset.
		\]
		
		Moreover, by the fact that $\mathcal{C}_1, \mathcal{C}_2, \mathcal{C}_3$ are pairwise opposite, we obtain
		\[
		\mathcal{A}_{12} = \widetilde{A}_{12} \qquad \text{and} \qquad 
		\mathcal{A}_{13} = \widetilde{A}_{13}.
		\]
		Furthermore, taking the product $\mathbb{R}^m \times \mathcal{A}_{23}$ yields a unique apartment in $X$, whose ideal boundary must contain $C_2$ and $C_3$ (see Lemma \ref{lem::opposition_projection}(4)). Thus
		\[
		\mathbb{R}^m \times \mathcal{A}_{23} = A_{23},
		\]
		and therefore $A_{23}$ contains a root-flat parallel to $F_1$. By construction, we also have
		\[
		\mathbb{R}^m \times \mathcal{Q}_j = Q_j, 
		\qquad \text{for every } j \in \{1,2,3\}.
		\]
		
		Finally, by the choice of $F_1$ as having maximal dimension among all possible root-flats contained in $\partial Q_i$, we must have
		\[
		\partial \mathcal{Q}_j^{\mathrm{fin}} \neq \emptyset,
		\qquad \text{for every } j \in \{1,2,3\}.
		\]
		From here the conclusion of the proposition follows easily.
	\end{proof}

	\begin{rem}
		\label{rem::ideal_affine}
		We observe that Proposition~\ref{prop::non_generic_flats} implies that \(\Ch(X^{\infty})_{\mathrm{id\text{-}gen}}^{[3]}\) is contained in \(\Ch(X^{\infty})_{\mathrm{aff\text{-}gen}}^{[3]}\).  In general, the converse inclusion does not hold, and in Section~\ref{subsec::conf_non_affine_generic} we construct explicit examples illustrating this failure.
	\end{rem}
	
	\section{The barycenter map of affine-generic triples}
	We want to associate a well-defined barycenter map for any triple of ideal chambers that are in affine-generic position. For that we need more definitions.
	
	\begin{Def}
		\label{def::generic_position_convex_hull}
		Let  $\{C_1, C_2, C_3\} \in \Ch(X^\infty)_{\mathrm{aff\text{-}gen}}^{[3]}$.  Using the notation from Definition~\ref{def::generic_position_1}, for each \( i \in \{1,2,3\} \), let  
		\[
		\mathcal{R}_{X}(\partial Q_i^{\mathrm{fin}})
		\]
		be the set of chambers of \(\Ch(X)\) that contain a subsimplex in \(\partial Q_i^{\mathrm{fin}}\). Since \( X \) is locally finite, the set \(\mathcal{R}_{X}(\partial Q_i^{\mathrm{fin}})\) is finite. 
		
		We denote by  
		\[
		\Gamma(C_1, C_2, C_3)
		\]
		the \emph{convex hull} (i.e., the smallest convex chamber subcomplex) in \( X \) of the finite set of chambers  $\bigcup_{i \in \{1,2,3\}} \mathcal{R}_{X}(\partial Q_i^{\mathrm{fin}})$ (see \cite[Examples 3.133.(c)]{abramenko_brown08}). Since each \(\mathcal{R}_{X}(\partial Q_i^{\mathrm{fin}})\) is finite, it follows that \(\Gamma(C_1, C_2, C_3)\) is a finite collection of chambers in \( X \). Therefore, we have a well-defined map  
		\[
		\Gamma : \affgen \longrightarrow \{\text{bounded convex chamber subcomplexes of } X\}.
		\]
		
		We further define  
		\[
		\zeta : \affgen \longrightarrow X
		\]
		to be the map that associates to each  $\{C_1, C_2, C_3\} \in \Ch(X^\infty)_{\mathrm{aff-gen}}^{[3]}$ the point  $
		\zeta(C_1, C_2, C_3)$ in \( X \), which is the circumcenter of the bounded convex hull \(\Gamma(C_1, C_2, C_3)\) (see \cite[Chapter II.2, Proposition 2.7]{bridson_haefliger99}).
	\end{Def}

	\begin{prop}\label{prop::barycenter_map}
		The barycenter map \[\zeta: \affgen\to X \]
		from affine-generic triples of chambers at infinity to $X$ is locally constant, hence continuous. 
	\end{prop}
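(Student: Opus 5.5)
We will show that for every affine-generic triple $(C_1,C_2,C_3)$ there is a neighbourhood $U$ of it in $\Ch(X^\infty)^3$ such that every triple $(C_1',C_2',C_3')\in U$ is affine-generic and $\Gamma(C_1',C_2',C_3')=\Gamma(C_1,C_2,C_3)$. Since $\zeta$ is the circumcenter of $\Gamma$, it then suffices to prove that $\Gamma$ is locally constant. For the topology we use the standard one on $\Ch(X^\infty)$ (cone topology, equivalently the totally disconnected topology for locally finite $X$). A basis of neighbourhoods of $C$ is given by the sets $U_x(C)$ of ideal chambers $C'$ such that the Weyl sector $Q(x,C')$ based at a fixed special vertex $x$ shares with $Q(x,C)$ a prescribed large bounded initial piece, i.e.\ contains a fixed chamber $c$ far inside $Q(x,C)$.

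\textbf{Step 1: localize the apartments.} Fix a special vertex $x$ and a large ball $B$ containing $\Gamma(C_1,C_2,C_3)$ together with all bounded faces of the $Q_i$. Choose chambers $c_i$ deep inside sectors of $A_{ij}\cap A_{ik}$ pointing to $C_i$. Now let $C_i'\in U$, meaning $C_i'$ contains a sector germ through $c_i$, or more precisely $Q(y,C_i')\supset Q(y,C_i)\cap(\text{large ball})$. We will prove that the unique apartment $A'_{ij}$ containing $C_i',C_j'$ coincides with $A_{ij}$ on the ball $B$, i.e.\ $A'_{ij}\cap B=A_{ij}\cap B$. The standard tool is the following: if two sectors $S_i\subset A_{ij}$ and $S_j\subset A_{ij}$ are opposite and based at a point $y$, then the convex hull of their initial pieces is a large convex region of $A_{ij}$ around $y$. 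Any apartment containing sectors that agree with $S_i,S_j$ up to a large radius contains this convex hull by convexity of apartment intersections. Thus a neighbourhood of $C_i,C_j$ forces $A'_{ij}\supseteq A_{ij}\cap B$, and the intersection with $B$ is then determined.

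\textbf{Step 2: the finite boundaries are unchanged.} From Step 1 we get $Q_i'\cap B=Q_i\cap B$ for all $i$. We must also show that $Q_i'$ has no new bounded faces outside $B$, and that the bounded faces of $Q_i$ remain bounded faces of $Q_i'$. Since $Q_i$ contains a sector toward $C_i$ and $Q_i'$ contains one toward $C_i'$, and these sectors agree up to large radius, the polyhedra $Q_i$ and $Q_i'$ are determined near $B$ by the same walls. The faces of $Q_i$ meeting $B$ are bounded, or unbounded but leaving $B$. An unbounded face of $Q_i$ could in principle become bounded in $Q_i'$ only far away. To exclude this, choose $B$ so large that every unbounded face of $Q_i$ crossing $B$ is witnessed by a long segment. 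Then use convexity together with the local finiteness of the wall structure to show that the recession directions of $Q_i'$ and $Q_i$ agree: the recession cone is governed by the ideal boundary $A_{ij}'^\infty\cap A_{ik}'^\infty$. This yields $\partial Q_i'^{\mathrm{fin}}=\partial Q_i^{\mathrm{fin}}\ne\emptyset$, so the perturbed triple is affine-generic, and the finite sets $\mathcal R_X(\partial Q_i^{\mathrm{fin}})$ coincide. The convex hull $\Gamma$ depends only on these sets, so $\Gamma$ and $\zeta$ are constant on $U$.

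\textbf{Main obstacle.} The delicate point is Step 2: controlling the faces of $Q_i'$ outside the ball $B$. Agreement on a ball does not obviously preserve which faces are bounded, since $A'_{ij}$ and $A_{ij}$ may branch apart outside $B$. The first approach will be to express $\partial Q_i^{\mathrm{fin}}$ intrinsically, as the set of points where the branching between $A_{ij}$ and $A_{ik}$ occurs along walls not containing any ideal simplex common to the three boundaries. We then show that, after branching, $A'_{ij}$ and $A'_{ik}$ both still contain the sector toward $C_i'$, which pins their intersection to equal $Q_i$ in a neighbourhood of $\partial Q_i^{\mathrm{fin}}$. Failing that, the fallback is Proposition~\ref{prop::non_generic_flats}: if the perturbed triple had $\partial Q_i'^{\mathrm{fin}}$ not containing $\partial Q_i^{\mathrm{fin}}$, we would try to produce a root-flat in $\partial Q_i'^{\mathrm{inf}}$ passing through $B$. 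This would contradict $Q_i'\cap B=Q_i\cap B$ and the fact that the bounded faces of $Q_i$ lie in $B$.
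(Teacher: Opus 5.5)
\paragraph{Verdict.} Your Step 2 has a genuine gap, and it cannot be closed, because the statement itself fails.

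\paragraph{Where the argument breaks.} Your Step 1 is essentially the paper's localization. The paper takes far-out chambers $c_i$ on bisectors through points of $\partial Q_i^{\mathrm{fin}}$ and uses that $\Gamma(c_i,c_j)$ lies in every nearby $A'_{ij}$. The paper then simply asserts the conclusion of your Step 2: that the extension of $\Gamma(c_i,c_j)\cap\Gamma(c_i,c_k)$ in $A'_{ij},A'_{ik}$ ``must coincide with $Q_i'$''. The scenario you hoped to exclude, an unbounded face of $Q_i$ becoming bounded far outside $B$, really happens. The false step is ``the recession directions of $Q_i'$ and $Q_i$ agree''. The recession cone of $Q_i'$ is indeed given by $A'^{\infty}_{ij}\cap A'^{\infty}_{ik}$, but this set is not locally constant in the triple. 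Read in the common chart it can only shrink. Whenever it shrinks while $Q_i'\cap B=Q_i\cap B$, the pointed polyhedron $Q_i'$ is forced to acquire new vertices outside $B$. Your fallback via Proposition~\ref{prop::non_generic_flats} does not help either, since the perturbed triples stay affine-generic.

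\paragraph{Counterexample.} Let $X$ be the building of $\SL_3(\bbQ_p)$ and take the pairwise opposite flags
\[
C_1=(\langle e_1\rangle\subset\langle e_1,e_2\rangle),\quad
C_2=(\langle e_3\rangle\subset\langle e_2,e_3\rangle),\quad
C_3=(\langle e_1+e_3\rangle\subset\langle e_1+e_3,e_2+e_3\rangle).
\]
Write $[a,b,c]$ for the class of $p^a\bbZ_pe_1\oplus p^b\bbZ_pe_2\oplus p^c\bbZ_pe_3$. Checking which lattices split along both frames gives $Q_1=\{b\ge a,\ c\ge a\}$, a $120^\circ$ cone with apex $o=[0,0,0]$. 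Likewise $Q_2$ and $Q_3$ are $120^\circ$ cones with apex $o$. So the triple is affine-generic with $\partial Q_i^{\mathrm{fin}}=\{o\}$. Each $A_{ij}^\infty\cap A_{ik}^\infty$ consists of two chambers, all containing the ideal vertex $\langle e_1,e_3\rangle$, so the triple is not ideal-generic.

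Now let $C_2^{(k)}=(\langle p^ke_2+e_3\rangle\subset\langle e_2,e_3\rangle)\to C_2$. In the frame $(e_1,e_2,p^ke_2+e_3)$ of $A_{12}^{(k)}$ one finds $Q_1^{(k)}=\{b\ge a,\ c\ge a,\ c\ge b-k\}$. A similar computation shows that $Q_2^{(k)}$ and $Q_3^{(k)}$ acquire the same second vertex $y_k=[0,k,0]$. Hence $\partial(Q_i^{(k)})^{\mathrm{fin}}=[o,y_k]$ for every $i$.

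The isometry $L\mapsto\mathrm{diag}(1,p^k,1)\,L^*$, with $L^*$ the dual lattice, swaps $o$ and $y_k$. It therefore preserves $\Gamma(C_1,C_2^{(k)},C_3)$, which is the convex hull of the chambers having a face in $[o,y_k]$. So $\zeta(C_1,C_2^{(k)},C_3)$ is equidistant from $o$ and $y_k$, and it escapes to infinity as $k\to\infty$. Thus $\zeta$ is not even continuous at this affine-generic triple.

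\paragraph{What your argument does prove.} Combined with the shrinking argument above, your two steps give local constancy at triples with $A_{ij}^\infty\cap A_{ik}^\infty=\overline{C_i}$ for each $i$. There the recession cone of $Q_i'$ is pinned to the Weyl cone of $C_i'$. A supporting-hyperplane argument at the vertices of $Q_i$ then shows that $Q_i'$ is the transported copy of $Q_i$, with the same bounded faces.
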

	\begin{proof}
		Let us fix  an affine-generic triple of chambers at infinity $(C_1, C_2, C_3) \in \Ch(X^\infty)^{[3]}_{\mathrm{aff\text{-}gen}}$. We use the same notation as in Definition~\ref{def::generic_position_1}. So let $A_{ij}$ be the unique affine apartment in $X$ associated with the pair of opposite ideal chambers $(C_i, C_j)$.  Let $\xi_i \in X^\infty$ be the barycenter of the ideal chamber $C_i$, where $i \in \{1,2,3\}$. For each $i \in \{1,2,3\}$, choose a point $x_{i} \in \partial Q_i^{\mathrm{fin}}  \neq \emptyset$ and for each $j \in \{1,2,3\}$, with $j \neq i$, consider the bi-infinite geodesic line $(\xi_i, x_i, \xi_j)$ in the apartment $A_{ij}$ between the ideal points $\xi_i$ and $\xi_j$. 
		
		Let $r > 0$ be a real number, which we may assume to be arbitrarily large.  Denote by $x_{r,i}$ the unique point on the geodesic ray $(\xi_i, x_i)$ that is at distance exactly $r$ from $x_i$, for each $i \in \{1,2,3\}$. Since the geodesic lines $(\xi_i, x_i, \xi_j)$ are strongly regular by construction, they are not contained in any of the walls of the affine apartment $A_{ij}$. Thus, if we perturb \( r \) a little bit, we can assume that the point \( x_{r,i} \) lies in the interior of a chamber \( c_{i} \) of \( A_{ij} \cap A_{ik} \), for $\{i,j,k\} = \{1,2,3\}$.
		
		Now, we consider the convex hull $\Gamma(c_i, c_j) \subset A_{ij}$ (see \cite[Definition 4.116]{abramenko_brown08}), for every pairwise distinct $i,j \in \{1,2,3\}$. Then, by \cite[the remarks after Definition 4.116]{abramenko_brown08}, the convex hull $\Gamma(c_i, c_j)$ lies in any affine apartment of $X$ that contains the two chambers $c_i, c_j$. Since, for each \(i \in \{1,2,3\}\), the point \(x_i\) lies in the bounded set \(\partial Q_i^{\mathrm{fin}}\), we can choose \(r\) sufficiently large so that \(\partial Q_i^{\mathrm{fin}}\) and \(\partial Q_j^{\mathrm{fin}}\) are contained in the interior of \(\Gamma(c_i,c_j)\). Moreover, \(r\) can be chosen so that, for every pairwise distinct \(i,j,k \in \{1,2,3\}\), the geodesic-ray extensions in the apartments \(A_{ij}\) and \(A_{ik}\) of \[ \Gamma(c_i,c_j)\cap\Gamma(c_i,c_k) \subset A_{ij}\cap A_{ik} = Q_i \] coincide with the cone \(Q_i\). This implies that
\begin{equation}
\label{equ::cone_in_apartment}
 Q_i\cap\Gamma(c_i,c_j) = \Gamma(c_i,c_j)\cap\Gamma(c_i,c_k).
\end{equation}
		Then, for the chosen $r$, we take $U_i:= U(x_i,r,C_i)$ to be the standard open neighborhood with base point $x_i$ and radius $r$ around $C_i$.  Then, for any $C'_i \in U_i$ with $i \in \{1,2,3\}$, and since $x_{r,i}$ lies in the interior of $c_i$, we obtain that $\Gamma(c_i, c_j)$ is contained in the unique apartment $A'_{ij}$ whose ideal boundary contains the ideal chambers $C'_i$ and $C'_j$, for all $i \neq j \in \{1,2,3\}$. Indeed, this follows from the fact that the strongly regular bi-infinite geodesic line $(\xi'_i, x_i, \xi'_j)$, where $\xi'_i$ and $\xi'_j$ are the barycenters of the ideal chambers $C'_i$ and $C'_j$, respectively, uniquely determines the apartment $A'_{ij}$, and intersects the strongly regular bi-infinite geodesic line $(\xi_i, x_i, \xi_j)$ in a long geodesic segment that determines $\Gamma(c_i, c_j)$. 
		
		By construction, we notice that $ \partial Q_i^{\mathrm{fin}}  \subset \Gamma(c_i, c_j)\cap \Gamma(c_i, c_k) \subset Q_i$, for any $i \in \{1,2,3\}$ with $\{i,j,k\}=\{1,2,3\}$. Recall also that, for every \(i \neq j \in \{1,2,3\}\), the apartments \(A_{ij}\) and \(A'_{ij}\) both contain the convex hull \(\Gamma(c_i,c_j)\). Consequently, their intersection \(A_{ij} \cap A'_{ij}\) is a convex set, possibly larger than \(\Gamma(c_i,c_j)\), that contains \(\Gamma(c_i,c_j)\). Upon considering the intersection \[ A'_{ij}\cap A'_{ik}=:Q'_i, \] we claim that 
		\[ Q'_i\cap\Gamma(c_i,c_j) = Q_i\cap\Gamma(c_i,c_j) = \Gamma(c_i,c_j)\cap\Gamma(c_i,c_k). \] 

Indeed, the second equality is just equation (\ref{equ::cone_in_apartment}). For the first equality observe that, by construction, \( \Gamma(c_i,c_j)\cap\Gamma(c_i,c_k) \) is contained in both $A_{ij}\cap A_{ik}$ and $A'_{ij}\cap A'_{ik}$. Thus, \[ Q_i\cap\Gamma(c_i,c_j)\subset Q'_i\cap\Gamma(c_i,c_j). \] Now, suppose that $Q'_i\cap\Gamma(c_i,c_j)$ were strictly larger than $\Gamma(c_i,c_j)\cap\Gamma(c_i,c_k)$. Then, by the convexity of the cone $Q'_i$, the apartment $A'_{ik}$ would contain not only $\Gamma(c_i,c_k)$, by construction, but also points of $Q'_i\cap\Gamma(c_i,c_j)$ that do not belong to $\Gamma(c_i,c_k)$ and lie in a small neighborhood of \[ \partial Q_i^{\mathrm{fin}}\subset \Gamma(c_i,c_j)\cap\Gamma(c_i,c_k) \] within \[ \Gamma(c_i,c_j)\subset A_{ij}\cap A'_{ij}. \] However, the existence of such points contradicts the fact that $\Gamma(c_i,c_k)$ is a bounded convex flat in the apartment $A'_{ik}$ of the same dimension as $A'_{ik}$, and that $\partial Q_i^{\mathrm{fin}}$ lies in the interior of $\Gamma(c_i,c_k)$. The claim follows.
 
 Consequently, the extension of \( \Gamma(c_i,c_j)\cap\Gamma(c_i,c_k) \) in the apartments \(A'_{ij}\) and \(A'_{ik}\) must coincide with \(Q'_i\). Since we have that 
		$$ \partial Q_i^{\mathrm{fin}} \subset  \Gamma(c_i, c_j)\cap \Gamma(c_i, c_k)  \subset A'_{ij} \cap A'_{ik} =Q'_i,$$
		for any $C'_i \in U_i$, for $i \in \{1,2,3\}$, all the above implies that $\partial Q_i^{\mathrm{fin}} = (\partial Q'_i)^{\mathrm{fin}}$, for every $i \in \{1,2,3\}$.
		
		This means that for $r$ large enough we have the equality
		$$\zeta(C'_1,C'_2,C'_3) = \zeta(C_1,C_2,C_3),$$
		for any $C'_i \in U_i$ with $i \in \{1,2,3\}$. This proves the proposition. 
		
	\end{proof}
	
	\section{Configurations of non-generic triples of ideal chambers}
	\label{sec::configurations_non-generic}
	
	\begin{Def}
		Let $X$ be an affine building and let $A$ be an apartment of $X$. A bi-infinite geodesic line in $A$ is called a \textbf{bisector} if its ideal endpoints are the barycenters of two opposite ideal chambers in the ideal boundary $A^{\infty}$ of $A$.
	\end{Def}

	\begin{prop}\label{prop antipodal non generic case}
		Let $X$ be a thick affine building with associated affine Coxeter group $(W,S)$. If in the Coxeter complex of $(W,S)$ there is no root-flat orthogonal to bisectors, then every antipodal triple of ideal chambers of $X^\infty$ is actually in ideal-generic position, and in particular in affine-generic position. 
	\end{prop}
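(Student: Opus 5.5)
We argue by contraposition: assuming an antipodal triple $\{C_1,C_2,C_3\}$ with $A_{12}^\infty\cap A_{13}^\infty\cap A_{23}^\infty\neq\emptyset$, we produce a root-flat orthogonal to a bisector in some apartment, hence in the Coxeter complex of $(W,S)$, since every apartment of $X$ is isomorphic to it. The last clause of the statement then follows from Remark~\ref{rem::ideal_affine}, which says that ideal-generic triples are affine-generic.

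\textbf{Step 1 (a common simplex at infinity).} The intersection $A_{12}^\infty\cap A_{13}^\infty\cap A_{23}^\infty$ is a non-empty intersection of convex subcomplexes of the spherical building $X^\infty$. It contains no chamber $C_i$: for instance $C_1\notin A_{23}^\infty$, because $A_{23}^\infty$ contains the unique chamber opposite $C_2$ and the unique chamber opposite $C_3$ within it, and $C_1$ is opposite both. So the intersection contains a simplex $\eta_+$, and $-\eta_+$ (its opposite in $A_{12}^\infty$) also lies in $A_{12}^\infty$. The more natural choice is a simplex $\eta_+$ of maximal dimension in the intersection, together with its opposite $\eta_-$, chosen to lie in all three apartments. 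Making this possible is part of Step 3. Alternatively, one can take a point $\xi$ in the intersection and the geodesics through it.

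\textbf{Step 2 (reduce to a lower rank building).} Apply Proposition~\ref{prop::res_building} and Lemma~\ref{lem::opposition_projection}. Each $A_{ij}$ contains $\eta_\pm$ at infinity, so $A_{ij}\subset X(\eta_+,\eta_-)\cong\mathbb{R}^{n-|I|}\times X_I$. The flat factor $F\cong\mathbb{R}^{n-|I|}$ is a root-flat: it is the intersection of the walls containing $\eta_\pm$. By Lemma~\ref{lem::opposition_projection}(2),(3),(5), the chambers $C_i$ project to an antipodal triple $\mathcal C_1,\mathcal C_2,\mathcal C_3$ in $X_I^\infty$.

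\textbf{Step 3 (the key geometric step, and the main obstacle).} In $A_{ij}$, let $\xi_i,\xi_j$ be the barycenters of $C_i,C_j$ and let $\gamma_{ij}$ be a bisector joining them. Decompose the direction of $\gamma_{ij}$ along $F$ and along the transversal factor. We want a bisector whose direction is orthogonal to a root-flat parallel to $F$. The idea is to compare the three components along $F$. The ideal point of $\gamma_{ij}$ projected to $F^\infty$ is determined by $C_i$ alone: it is the projection of $\xi_i$ to the Euclidean factor, which depends only on the cone in $\mathcal H(A,F)$ containing $C_i$. Call this projection $v_i\in F^\infty$, or a vector $v_i$ in $\vec F$. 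In $A_{ij}$ the points $\xi_i$ and $\xi_j$ are antipodal, so $v_j=-v_i$. Applying this to all three pairs gives $v_1=-v_2$, $v_2=-v_3$ and $v_3=-v_1$, hence $v_1=-v_1$ and $v_i=0$.

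So each bisector $\gamma_{ij}$ has zero component along $F$. It is therefore orthogonal to $F$, which is a root-flat that is not a point because $\eta_+\neq\emptyset$. This contradicts the hypothesis.

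The delicate point is justifying that the $F$-component of $\xi_i$ is the same when viewed in $A_{12}$ and in $A_{13}$. The projections $X(\eta_+,\eta_-)\to\mathbb{R}^{n-|I|}$ are compatible across apartments, via the canonical product structure of Proposition~\ref{prop::res_building}, and $\xi_i$ is a single point of $X^\infty$. This is where most care is needed.
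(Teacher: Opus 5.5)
Your sign-alternation idea ($v_1=-v_2=v_3=-v_1$) is the same mechanism the paper uses. However, Steps 1--2 have a genuine gap. To put $A_{12},A_{13},A_{23}$ inside a single $X(\eta_+,\eta_-)\cong\mathbb{R}^{n-|I|}\times X_I$, you need a simplex $\eta_-$ opposite $\eta_+$ that lies in all three of $A_{12}^\infty,A_{13}^\infty,A_{23}^\infty$. Non-ideal-genericity does not provide one, because the opposite of $\eta_+$ inside $A_{ij}^\infty$ depends on $ij$. For instance, in an $\widetilde A_2$-building (whose boundary is the flag complex of a projective plane), take flags $C_i=(p_i,L_i)$ with $L_1,L_2,L_3$ concurrent at a point $q$ and $p_1,p_2,p_3$ not collinear. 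The triple is antipodal and $A_{12}^\infty\cap A_{13}^\infty\cap A_{23}^\infty=\{q\}$, but the vertex opposite $q$ in $A_{ij}^\infty$ is the line $p_ip_j$, which changes with $ij$. This is not an accident of the example. Whenever such $\eta_\pm$ exist, you get $A_{1j}=\mathbb{R}^{n-|I|}\times\mathcal A_{1j}$, so $Q_1=\mathbb{R}^{n-|I|}\times(\mathcal A_{12}\cap\mathcal A_{13})$ has only unbounded faces and the triple is not affine-generic. Hence your argument can at most show that the hypothesis forces \emph{affine}-genericity. The triples that are affine-generic but not ideal-generic (cf.\ Section~\ref{subsec::conf_non_ideal_generic}) are exactly the ones it cannot reach. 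Step~3 does not supply $\eta_-$; the ``delicate point'' you flag there is a different issue, and it is harmless once a common product structure exists.

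The repair is to make the compared quantity depend on $\eta_+$ alone. For a point $u\in A_{12}^\infty\cap A_{13}^\infty\cap A_{23}^\infty$, the Tits angle $\angle(u,\xi_i)$ does not depend on the apartment used to measure it. In the round sphere $A_{ij}^\infty$ the point $\xi_j$ is the antipode of $\xi_i$, so $\angle(u,\xi_i)+\angle(u,\xi_j)=\pi$. Cycling through the three pairs gives $\angle(u,\xi_1)=\pi/2$. Your $v_i$ is determined by the numbers $\cos\angle(u,\xi_i)$, with $u$ running over the vertices of $\eta_+$ (they are its inner products with a spanning set of directions of $F$), and these numbers are defined without any $\eta_-$. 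Apply this to all vertices of $\eta_+$, which is a proper simplex since the intersection contains no chamber. This shows that $\xi_1$ is orthogonal to the linear span of $\eta_+$. That span is the direction of a root-flat of $A_{12}$: intersect the walls through a special vertex whose ideal boundaries contain $\eta_+$. That root-flat is orthogonal to the bisector from $\xi_1$ to $\xi_2$, and no product decomposition is needed. The paper obtains the same identity $\alpha=\pi-\alpha$ by transporting the angle at $x_1$, via the retractions $\rho_{A_{23},C_2}$ and $\rho_{A_{12},C_1}$, along parallel rays towards a common ideal point.
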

	
	\begin{proof}
		Let $\{C_1, C_2, C_3\}$ be an antipodal triple of ideal chambers in $X^{\infty}$.  For each distinct pair $i \neq j \in \{1,2,3\}$, let $A_{ij}$ be the unique affine apartment in $X$ whose ideal boundary contains both $C_i$ and $C_j$.  Let $\xi_i$ be the barycenter of the ideal chamber $C_i$, for $i \in \{1,2,3\}$.
		
		Suppose that the antipodal triple $\{C_1, C_2, C_3\}$ is not in ideal-generic position. Then, by definition, the intersection $A_{12}^{\infty} \cap A_{23}^{\infty} \cap A_{13}^{\infty}$ is non-empty. Thus, for every $i \in \{1,2,3\}$ we can find an unbounded,  geodesically convex subset  $V_{i} \subset \partial Q_i$ of a root-flat, where $Q_i := A_{ij} \cap A_{ik}$ (see the notation from Definition \ref{def::generic_position_1}), such that the ideal boundaries coincide:
		$$V_{i}^{\infty}= V_{j}^{\infty}, \quad \text{ for every } i \neq j \neq k \in \{1,2,3\}.$$ 
		
		Without loss of generality, we can assume that $V_i$ are of maximal dimension with the above properties. Moreover, notice the parallelism: 
		$$V_i  \text{ and } V_j \text{ are parallel in } A_{ij}, \quad  \text{ for every } i\neq j \in \{1,2,3\}.$$ 
		
		In addition, for each $i \in \{1,2,3\}$, fix some point $x_i $ in the interior of $V_i$. 	 
		
		For each pair $i \neq j$, consider the bi-infinite geodesic line $\gamma^{i}_{ij}$, respectively  $\gamma^{j}_{ij}$, in $A_{ij}$ passing through $x_i$, respectively $x_j$, with endpoints given by the barycenters of the ideal chambers $C_i$ and $C_j$. The geodesic line $\gamma^{i}_{ij}$ is  the bisector associated with $\{C_i, C_j\}$ through the point $x_i$. Notice that the bisectors $\gamma^{i}_{ij}, \gamma^{j}_{ij}$ are parallel in $A_{ij}$.
		
		We claim that, for each $i \in \{1,2,3\}$, the set $V_i$, viewed as a subset of the apartment $A_{ij}$ (more precisely, the root-flat of
		$A_{ij}$ spanned by $V_i$), is perpendicular to the bisector $\gamma^{i}_{ij}$ passing through the point $x_i \in V_i$. To prove this
		claim, it suffices to consider the case $i=1$, the apartment $A_{12}$, and the bisector $\gamma^{1}_{12}$.
		
		Indeed, choose any geodesic ray $r_1 := [x_1, \eta) \subset V_1$, with ideal endpoint $\eta \in V_1^{\infty}$. Let $\alpha$ denote the Euclidean angle at $x_1$ between the rays $r_1$ and $[x_1, \xi_1) \subset \gamma^{1}_{12}$.  Consider also the canonical retractions
		\[
		\rho_{A_{ij}, C_i}, \qquad \rho_{A_{ij}, C_j},
		\]
		from the building $X$ onto the apartment $A_{ij}$, based at the chambers $C_i$ and $C_j$, respectively, for each $i \neq j \in \{1,2,3\}$.
		
		To establish the desired perpendicularity, we ``rotate'' the ray $[x_1, \xi_1) \subset \gamma^{1}_{12}$ around the class of rays parallel to $r_1$ by successively applying the retractions $\rho_{A_{23}, C_2}$ and $\rho_{A_{12}, C_1}$.

		Since $C_2$ is opposite to both $C_1$ and $C_3$, we have
		\[
		\rho_{A_{23}, C_2}(C_1) = C_3, \quad  \rho_{A_{23}, C_2}(A_{12}) = A_{23}, \quad  \rho_{A_{23}, C_2}(V_1) \text{ parallel to } V_2, V_3.
		\]
		In particular, we obtain that  
		$$\rho_{A_{23}, C_2}([x_1, \xi_1)) \text{ is parallel to } [x_3, \xi_3), \quad \rho_{A_{23}, C_2}(\xi_1)=\xi_3, \quad  \rho_{A_{23}, C_2}(\eta)=\eta.$$
		
		By parallelism, the Euclidean angle between the rays $\rho_{A_{23}, C_2}(r_1)$ and $\rho_{A_{23}, C_2}([x_1, \xi_1))$ remains equal to $\alpha$, which is the same as the Euclidean angle at $x_3$ between the rays $r_3: =  [x_3, \eta)$ and $[x_3, \xi_3) \subset \gamma^{3}_{23}$.

		\medskip
		Similarly, since $C_1$ is opposite to both $C_2$ and $C_3$, we obtain
		\[
		\rho_{A_{12}, C_1}(C_3) = C_2, \quad \rho_{A_{12}, C_1}(A_{13}) = A_{12}, \quad  \rho_{A_{12}, C_1}(V_3) \text{ parallel to } V_1, V_2.
		\]
		In particular,
		$$\rho_{A_{12}, C_1}([x_3, \xi_3)) \text{ is parallel to } [x_2, \xi_2), \quad \rho_{A_{12}, C_1}(\xi_3)=\xi_2, \quad  \rho_{A_{12}, C_1}(\eta)=\eta.$$
		Again by parallelism, the Euclidean angle between the rays $\rho_{A_{12}, C_1}(r_3)$ and $\rho_{A_{12}, C_1}([x_3, \xi_3))$ remains equal to $\alpha$. And this angle is the same as the Euclidean angle at $x_2$ between the rays $r_2: =  [x_2, \eta)$ and $[x_2, \xi_2) \subset \gamma^{2}_{12}$. 
		
		Notice that a retraction $\rho_{A,C}$ on $X$ centered at an ideal chamber \( C  \in \Ch(X^\infty)\) onto an affine apartment $A$ of $X$ whose ideal boundary also contains \( C\) is distance-preserving when retracting any affine apartment $A'$ of \( X \) whose ideal boundary also contains \( D \). In particular, the angles considered above are preserved under such a retraction $\rho_{A,C}$.
		
		\medskip
		Since we are working with the parallel rays $r_1, r_2, r_3$, the angle $\alpha$ at $x_1$ between the rays
		$r_1$ and $[x_1, \xi_1) \subset \gamma^{1}_{12}$ can coincide with the  corresponding angle at $x_2$ between the rays $r_2$ and $[x_2, \xi_2) \subset
		\gamma^{2}_{12}$ \emph{if and only if} $\alpha = 90^\circ$.  This completes the proof of the claim and yields a contradiction to the
		hypothesis that no root-flats exist in $A_{12}$ which are perpendicular to bisectors. The proposition is therefore established.
	\end{proof}
	
	By Proposition~\ref{prop antipodal non generic case}, in order to construct geometric configurations of non--ideal-generic and non--affine-generic triples of ideal chambers in \(X^{\infty}\), it suffices to consider affine buildings whose associated affine Coxeter groups admit root-flats that are perpendicular to bisectors.
	
	It is straightforward to verify that if the affine Coxeter group is of type \(\widetilde{C}_2\), or \(\widetilde{G}_2\), then no such root-flats exist. In these rank--two cases, the only root-flats are affine walls, and none of them are perpendicular to bisectors. 
	
	In contrast, if the Coxeter group is of type \(\widetilde{A}_2\), then there do exist affine walls that are perpendicular to bisectors. This is precisely the setting in which non-generic geometric configurations may occur.

	\subsection{A classification of finite Weyl groups by the criterion in Proposition~\ref{prop antipodal non generic case}}

	Throughout this section, $\Phi \subset V = \mathbb{R}^n$ is an irreducible finite root system with finite Weyl group $ W_{\mathrm{fin}}$, $S_{\mathrm{fin}}$ is the set of simple roots, and $w_0$ is the longest element of $ W_{\mathrm{fin}}$. The Weyl vector is defined by
	\[
	\rho := \tfrac12 \sum_{\alpha \in \Phi^+} \alpha,
	\]
	which corresponds to the direction of the bisector of the Weyl cone of the corresponding affine Weyl group $(W,S)$ associated with $( W_{\mathrm{fin}}, S_{\mathrm{fin}})$.  A \emph{proper root subspace} of $V$ is a subspace of the form $\mathrm{Span}(T)$ for some $T \subsetneq \Phi$ with $\mathrm{Span}(T) \subsetneq V$.
	
	\begin{prop}\label{prop:geom_criterion}
		The following conditions are equivalent: 
		\begin{enumerate}
			\item There is no root-flat in the Coxeter complex of the affine Weyl group $(W,S)$ that is perpendicular to a bisector; 
			\item For every root subspace \(U=\Span(T)\subsetneq V\), one has \(\rho\notin U\).
		\end{enumerate}
		
	\end{prop}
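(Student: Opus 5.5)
We prove the equivalence by translating both conditions into linear algebra in the apartment $V=\mathbb{R}^n$, identified with the affine Coxeter complex of $(W,S)$. An affine wall is a hyperplane $H_{\alpha,k}=\{x: \langle \alpha, x\rangle = k\}$ with $\alpha\in\Phi$. A root-flat is therefore a translate of a linear subspace of the form
\[
L_T=\bigcap_{\alpha\in T}\alpha^{\perp}=\Span(T)^{\perp},
\]
where $T\subseteq\Phi$ is such that $L_T\neq 0$, which means exactly that $\Span(T)\subsetneq V$.

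For the direction of bisectors, fix a Weyl chamber $\mathcal C$. The barycenter of the ideal chamber $\mathcal C^\infty$ is, up to the normalisation of the metric, the direction $\rho$. Here we use the paper's identification of the bisector with $\rho$, which one can check: $\rho$ is $W_{\mathrm{fin}}$-invariantly characterised as pairing equally with all simple coroots, so it is the incenter direction for the standard inner product. The opposite chamber is $w_0\mathcal C=-\mathcal C$, with barycenter $-\rho$. Any pair of opposite ideal chambers is conjugate under $W_{\mathrm{fin}}$ to the pair $(\mathcal C^\infty,(-\mathcal C)^\infty)$, so bisector directions are exactly the vectors $w\rho$ for $w\in W_{\mathrm{fin}}$. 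Translations do not affect orthogonality.

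We interpret ``perpendicular'' as follows: the direction of the bisector is orthogonal to the flat, i.e.\ $w\rho\perp L_T$. This is the sense used in the proof of Proposition~\ref{prop antipodal non generic case}, where every ray in $V_i$ makes angle $90^\circ$ with the bisector.

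With these identifications, the argument runs in three steps.

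\textbf{Step 1 (reduce condition (1)).} Condition (1) fails if and only if there exist $w\in W_{\mathrm{fin}}$ and $T\subseteq\Phi$ with $\Span(T)\subsetneq V$ and $w\rho\in L_T^{\perp}$.

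\textbf{Step 2 (identify the orthogonal complement).} Since $L_T=\Span(T)^\perp$, we have $L_T^\perp=\Span(T)$. So condition (1) fails if and only if $w\rho\in\Span(T)$ for some such $T$.

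\textbf{Step 3 (remove $w$).} Since $W_{\mathrm{fin}}$ permutes $\Phi$, we get $w^{-1}\Span(T)=\Span(w^{-1}T)$, and this is again a proper root subspace. Hence condition (1) fails if and only if $\rho$ lies in some proper root subspace $U$, which is the negation of (2).

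For the converse direction, given $\rho\in U=\Span(T)$, we take any point $p$ and form the root-flat $p+L_T$. It is not a point because $U\neq V$. It is orthogonal to the bisector through $p$ in direction $\rho$. If $T$ is nonempty, $p+L_T$ is genuinely an intersection of walls through a common point; we choose $p$ to be a special vertex, such as the origin, so that all $H_{\alpha,0}$ pass through it. The case $T=\emptyset$ gives $U=0$, which cannot contain $\rho\neq 0$, so it does not arise. The requirement $T\subsetneq\Phi$ in the definition is harmless, since a proper span forces $T\neq\Phi$.

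The main obstacle is making the dictionary rigorous. Concretely, two points need care:
\begin{itemize}
\item that every root-flat, as defined in Definition~\ref{def::affine_flat}, has direction space exactly of the form $\Span(T)^\perp$;
\item that the bisector direction of the chamber is $\rho$ and not merely proportional to the sum of the fundamental coweights.
\end{itemize}
For the second point, we will argue directly that the barycenter of the spherical simplex $\mathcal C^\infty$ is the unique unit direction making equal angles with all walls of $\mathcal C$. We then check $\langle\rho,\alpha_i^\vee\rangle=1$ for all $i$. If the root system is non-simply-laced, we adjust by noting that the criterion only uses membership in root subspaces. In that case we would verify whether the barycenter or $\rho$ is intended and, following the paper, adopt $\rho$ as the bisector direction.
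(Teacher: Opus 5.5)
Your linear-algebra argument is exactly the paper's: root-flats have direction $\Span(T)^\perp$, perpendicularity to a bisector of direction $w\rho$ means $w\rho\in\Span(T)$, conjugating by $w^{-1}$ removes $w$, and the converse takes $w=e$ at a special vertex. The weak point is the one you flagged: identifying bisector directions with the vectors $w\rho$. Neither of your proposed ways of settling it works.

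First, the identity $\langle\rho,\alpha_i^\vee\rangle=1$ says $(\rho,\alpha_i)=\tfrac12|\alpha_i|^2$. So the sine of the angle between $\rho$ and the wall $\alpha_i^\perp$ is $|\alpha_i|/(2|\rho|)$. This is independent of $i$ only when all simple roots have the same length. Hence $\rho$ is the equal-angle direction of the chamber only in simply-laced types. Since $\Phi$ is irreducible, the invariant inner product is unique up to scale, so there is no normalisation to adjust.

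Second, you cannot fall back on the remark that the criterion only involves membership in root subspaces, because that membership depends on which vector you test. $B_3$ and $C_3$ have the same reflecting hyperplanes $x_i=0$, $x_i=\pm x_j$. They therefore have the same chambers, the same bisector directions, the same root-flat directions and the same proper root subspaces, so condition (1) is literally the same for both. Yet $\rho_{B_3}=(\tfrac52,\tfrac32,\tfrac12)$ lies in no proper root subspace, while $\rho_{C_3}=(3,2,1)$ lies in $\{x_1=x_2+x_3\}=\Span(e_1+e_2,e_1+e_3)$. So the identification fails for at least one of $B_3$, $C_3$ and cannot be proved in general.

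Hence this step can only be assumed, not verified. The paper's proof does exactly that: it simply asserts that a bisector has direction $w\rho$. If you state the identification as an explicit standing convention, your proof is complete and coincides with the paper's. Without it, your argument proves something slightly different. With bisectors defined through geometric barycenters, it shows that (1) holds if and only if the true barycentric direction $b$ of a chamber lies in no proper root subspace. This agrees with (2) in simply-laced types, where $b$ is proportional to $\rho$ when the barycenter is the equal-angle point, but not in general.
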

	
	\begin{proof} For a root $\alpha \in \Phi$, we denote by $H_\alpha$ the corresponding wall (hyperplane) in $V$ through the origin and perpendicular to $\alpha$.  
		
		Note that a bisector in the Coxeter complex of the affine Weyl group $(W,S)$ has direction $w\rho$ for some $w \in W_{\mathrm{fin}}$, and a root-flat in the Coxeter complex of $(W,S)$ through the origin is of the form
		\[
		\bigcap_{\alpha \in T} H_\alpha = \mathrm{Span}(T)^\perp
		\]
		for some $T \subset \Phi$.  
		
		The perpendicularity condition reads $\mathrm{Span}(T)^\perp \subset (w\rho)^\perp$, equivalently $w\rho \in \mathrm{Span}(T)$. Applying $w^{-1}$, this becomes $\rho \in \mathrm{Span}(w^{-1}T) = \mathrm{Span}(T')$ for a (still proper) subset $T' \subsetneq \Phi$. Conversely, if $\rho \in \mathrm{Span}(T')$ for some proper $T' \subsetneq \Phi$, then taking $w=e$ and $T=T'$ exhibits a root-flat $\mathrm{Span}(T)^\perp$ that is perpendicular to the bisector through the origin in direction $\rho$. This proves the equivalence.
	\end{proof}
	
	Notice that checking the perpendicularity of a bisector on root-flats reduces to checking it on the one-dimensional root-flats, which are the \( W_{\mathrm{fin}} \)-orbits of the root-flats \( \bigcap_{\alpha \in S_{\mathrm{fin}}  \setminus \{\alpha_i\}} H_\alpha \), for each simple root \( \alpha_i \in S_{\mathrm{fin}}  \). The next lemma translates this idea into the language of spanning the Weyl vector by fewer roots of \( \Phi \).
	
	\begin{lem} \label{lem:reduction}
		The Weyl vector \( \rho \) lies in a proper root subspace \( \mathrm{Span}(T) \) for some subset \( T \subsetneq \Phi \) if and only if there exists an element \( w \in W_{\mathrm{fin}}  \) and a simple root \( \alpha_i \in S_{\mathrm{fin}}  \) such that $w\rho \in \mathrm{Span}\bigl(\Phi_{S_{\mathrm{fin}}  \setminus \{\alpha_i\}}\bigr)$.
	\end{lem}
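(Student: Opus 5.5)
The reverse implication is immediate, and the forward implication reduces to a standard fact about subspaces spanned by roots.

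For the reverse direction, suppose $w\rho \in \mathrm{Span}(\Phi_{S_{\mathrm{fin}}\setminus\{\alpha_i\}})$. Then $\rho \in \mathrm{Span}(w^{-1}\Phi_{S_{\mathrm{fin}}\setminus\{\alpha_i\}})$. The set $T:=w^{-1}\Phi_{S_{\mathrm{fin}}\setminus\{\alpha_i\}}$ is a subset of $\Phi$. Its span has dimension $n-1$, because the simple roots are linearly independent and $\Phi_J$ spans exactly $\mathrm{Span}(J)$. So $\mathrm{Span}(T)$ is a proper root subspace containing $\rho$.

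For the forward direction, suppose $\rho\in U=\mathrm{Span}(T)\subsetneq V$ with $T\subseteq\Phi$. I would first replace $T$ by $\Phi_U:=\Phi\cap U$. This does not change the span and makes $\Phi_U$ a closed root subsystem: it is the intersection of $\Phi$ with a subspace, so it is stable under its own reflections and spans $U$.

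The key step is to enlarge $U$ to a hyperplane that is spanned by roots and is $W_{\mathrm{fin}}$-conjugate to a standard parabolic span $\mathrm{Span}(S_{\mathrm{fin}}\setminus\{\alpha_i\})$. The standard fact I would use is the following: for any subspace $U$, the set $\Phi\cap U$ lies in $\Phi_{\bar U}$, where $\bar U$ is the intersection of all root subspaces containing it. A subspace spanned by roots is conjugate to the span of a standard parabolic subsystem if and only if it is an intersection of reflecting hyperplanes' orthogonal data. In other words, it should be of the form $(\bigcap_{\beta}H_\beta)^\perp$, i.e.\ $F^\perp$ for a root-flat $F$ through the origin.

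That formulation suggests the cleaner route, which is to pass to orthogonal complements, exactly as in the proof of Proposition~\ref{prop:geom_criterion}. Since $U\subsetneq V$, the intersection $\bigcap_{\alpha\in\Phi_U}H_\alpha = U^\perp$ is a nonzero subspace, and it is an intersection of walls, hence a root-flat through the origin. Any nonzero flat of this kind contains a one-dimensional root-flat. Equivalently, pick a face of the Coxeter arrangement restricted to it: a minimal nonzero intersection of walls containing a point of $U^\perp$ is a line $L$ that is the intersection of walls. By $W_{\mathrm{fin}}$-transitivity on faces, that line is $w^{-1}$ times an extremal ray line $\bigcap_{\alpha\in S_{\mathrm{fin}}\setminus\{\alpha_i\}}H_\alpha$. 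Then $L\subseteq U^\perp$ gives $U\subseteq L^\perp = w^{-1}\mathrm{Span}(\Phi_{S_{\mathrm{fin}}\setminus\{\alpha_i\}})$, so $w\rho\in\mathrm{Span}(\Phi_{S_{\mathrm{fin}}\setminus\{\alpha_i\}})$.

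The main obstacle is justifying that $U^\perp$ contains a line which is an intersection of walls and is conjugate to a fundamental coweight line. To handle it, I would take a generic point $v\in U^\perp$ and the closed face (cone) of the Weyl chamber complex containing it. Applying $w$, this cone lies in the closed fundamental chamber and equals the cone over some fundamental coweights $\varpi_i$ for $i$ outside a set $J$. Its span is $\bigcap_{\alpha\in J}H_\alpha\subseteq U^\perp$, and it contains some $\varpi_i$, since $v\neq0$. Finally, $\mathbb{R}\varpi_i=\bigcap_{\alpha\in S_{\mathrm{fin}}\setminus\{\alpha_i\}}H_\alpha$ gives the required line.
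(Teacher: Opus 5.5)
Your proof is correct, and its skeleton is the paper's: pass to $U^\perp$, move it by some $w\in W_{\mathrm{fin}}$ to a standard flat $\bigcap_{\alpha\in J}H_\alpha$ with $J\subsetneq S_{\mathrm{fin}}$, and pick $\alpha_i\notin J$. Your coweight $\varpi_i\in wU^\perp$ is exactly such a choice.

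The difference lies in how the move to standard position is justified. The paper only checks that $\Psi=\Phi\cap U$ is closed under its own reflections. It then invokes conjugacy of lower-rank root subsystems with standard parabolic ones. That conjugacy genuinely requires $\Psi$ to be cut out by a subspace, a point the paper leaves implicit. For instance, $\{\pm e_1\pm e_2\}\subset B_3$ is a rank-two subsystem conjugate to no $\Phi_{S'}$. Your argument proves exactly the needed case from scratch, by conjugating a nonzero $v\in U^\perp$ into the closed fundamental chamber, so it is more self-contained.

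You should write out the one line behind ``its span is contained in $U^\perp$''; with your convention this should read $wU^\perp$. Each root of $w(\Phi\cap U)$ vanishes at $wv$. It is a nonnegative or nonpositive combination of simple roots, all of which are $\geq 0$ on $wv$, so it is supported on $J=\{\alpha_j \mid \langle\alpha_j,wv\rangle=0\}$. Hence $wU\subseteq\Span(J)$, and $J\neq S_{\mathrm{fin}}$ since $wv\neq 0$.

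This works for any nonzero $v$. Genericity of $v$ only upgrades the inclusion to the paper's equality $wU=\Span(J)$, which the lemma does not need. The vague middle paragraph about $\bar U$ and ``orthogonal data'' can simply be deleted; your last paragraph alone carries the forward direction. Your reverse direction matches the paper's.
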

	\begin{proof}
		Suppose first that \(\rho\in U:=\Span(T)\) for some proper root subspace \(U\subsetneq V\).  Put
		\[
		L:=U^\perp=\bigcap_{\alpha\in T}H_\alpha, 
		\]
		which is an intersection of affine walls that is not reduced to a point. 

Let $\Psi:=\Phi\cap U$. For $\alpha\in\Phi$, denote by \[ s_\alpha(x)=x-\frac{2(\alpha,x)}{(\alpha,\alpha)}\alpha \] the reflection in $V$ associated with the root $\alpha$. If $\alpha,\beta\in\Psi$, then $\alpha,\beta\in U$. Then $s_\alpha$ preserves $U$ and we have \(s_\alpha(\beta)\in U\). As $\Phi$ is a root system, $s_\alpha(\beta)\in\Phi$. Thus \[ s_\alpha(\beta)\in\Phi\cap U=\Psi, \] showing that $\Psi$ is a root subsystem. 

Recall that parabolic subgroups of $W_{\mathrm{fin}}$ correspond to root subsystems of $\Phi$ of lower rank. Every such subsystem of \( \Phi \) is \( W_{\mathrm{fin}} \)-conjugate to a standard parabolic subsystem \( \Phi_{S'} \) for some subset \( S' \subset S_{\mathrm{fin}}  \). This implies that $L  $ is \(W_{\mathrm{fin}}\)-conjugate to a standard parabolic flat. Thus, there are \(w\in W_{\mathrm{fin}}\) and a subset \(J\subsetneq S_{\mathrm{fin}}\) such that
		\[
		wL=\bigcap_{\alpha\in J}H_\alpha .
		\]
		Taking orthogonal complements gives $ wU=\Span(J)$ and since \(J\subsetneq S_{\mathrm{fin}}\), we can choose \(\alpha_i\in S_{\mathrm{fin}}\setminus J\).  Then
		\[
		w\rho\in wU=\Span(J)
		\subseteq \Span\bigl(\Phi_{S_{\mathrm{fin}}\setminus\{\alpha_i\}}\bigr),
		\]
		which is the desired condition.
		
		Conversely, if \(w\rho\in\Span(\Phi_{S_{\mathrm{fin}}\setminus\{\alpha_i\}})\), then applying \(w^{-1}\) gives
		\[
		\rho\in \Span\bigl(w^{-1}\Phi_{S_{\mathrm{fin}}\setminus\{\alpha_i\}}\bigr),
		\]
		and the right-hand side is a proper root subspace of \(V\).  This proves the equivalence.
	\end{proof}
	
	Let us apply Lemma \ref{lem:reduction} to the list of finite Coxeter groups \( W_{\mathrm{fin}} \). Thus, it suffices to check whether there exists an element in the orbit \( W_{\mathrm{fin}} \cdot \rho \) that lies in one of the hyperplanes \( \mathrm{Span}\bigl(\Phi_{S_{\mathrm{fin}} \setminus \{\alpha_i\}}\bigr) \).	  
	
	For an overview of finite Coxeter groups and their associated root systems, one may consult \cite[pp.~22--24]{SamMetaplecticNotes}.

	\subsubsection{Coxeter types where root-flats are not perpendicular to bisectors}
	\label{subsec::not_perp_bisectors}
	
	In the following case-by-case calculations we use Bourbaki's standard numbering and realizations of the irreducible finite root systems. We follow specifically \cite[Ch.~IV, \S1]{bourbaki_IV-VI_02}. The explicit root-system data, including the positive roots, highest roots and fundamental weights, are taken from \cite[Ch.~VI, \S1, nos.~6, 8--10,
	and Planches~I--IX]{bourbaki_IV-VI_02}.  Since
	\[
	\rho=\frac12\sum_{\alpha\in\Phi^+}\alpha
	=\sum_{i=1}^{\operatorname{rk}\Phi}\omega_i,
	\]
	where the \(\omega_i\) are the fundamental weights, the Weyl vectors are obtained directly from these tables as well. 
	
	\begin{prop}
		Let \((W_{\mathrm{fin}}, S_{\mathrm{fin}})\) be a finite Coxeter system of type \( B_2 = C_2 \), \( G_2 \), \( B_3 \). Then \( W_{\mathrm{fin}} \) satisfies the conditions of Proposition \ref{prop:geom_criterion}; that is, there are no root-flats that are perpendicular to bisectors.
	\end{prop}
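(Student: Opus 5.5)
By Proposition~\ref{prop:geom_criterion} and Lemma~\ref{lem:reduction}, it suffices to show the following for each of the three types: there are no $w\in W_{\mathrm{fin}}$ and simple root $\alpha_i\in S_{\mathrm{fin}}$ with $w\rho\in \Span\bigl(\Phi_{S_{\mathrm{fin}}\setminus\{\alpha_i\}}\bigr)$.

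The first step is to rewrite this as a non-vanishing condition. The subspace $\Span\bigl(\Phi_{S_{\mathrm{fin}}\setminus\{\alpha_i\}}\bigr)$ is a hyperplane of $V$ spanned by the simple roots $\alpha_j$ with $j\neq i$. It is therefore exactly the orthogonal complement of the fundamental weight $\omega_i$, since $(\omega_i,\alpha_j)=0$ for $j\ne i$. Hence $w\rho$ lies in it if and only if $(w\rho,\omega_i)=0$, which by $W_{\mathrm{fin}}$-invariance of the inner product is equivalent to $(\rho,w^{-1}\omega_i)=0$. So the claim becomes: \emph{$\rho$ is not orthogonal to any element of the orbits $W_{\mathrm{fin}}\cdot\omega_i$.} This is a finite check of explicit orbits, done type by type with Bourbaki's realizations.

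For $B_2$, we have $\Phi=\{\pm e_1,\pm e_2,\pm e_1\pm e_2\}$, $\rho=(\tfrac32,\tfrac12)$, $\omega_1=e_1$ and $\omega_2=\tfrac12(e_1+e_2)$. Since $W_{\mathrm{fin}}$ acts by signed permutations, the orbits are $\{\pm e_1,\pm e_2\}$ and $\{\tfrac12(\pm e_1\pm e_2)\}$. The pairings with $\rho$ are $\pm\tfrac32$, $\pm\tfrac12$, and $\tfrac14(\pm3\pm1)$, all non-zero.

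For $B_3$, we have $\rho=(\tfrac52,\tfrac32,\tfrac12)$, $\omega_1=e_1$, $\omega_2=e_1+e_2$ and $\omega_3=\tfrac12(e_1+e_2+e_3)$. The orbits consist of all $\pm e_i$, all $\pm e_i\pm e_j$ with $i\ne j$, and all $\tfrac12(\pm e_1\pm e_2\pm e_3)$.
\begin{itemize}
\item Pairings with $\pm e_i$ are non-zero because every coordinate of $\rho$ is non-zero.
\item Pairings with $\pm e_i\pm e_j$ are non-zero because the absolute values $\tfrac52,\tfrac32,\tfrac12$ of the coordinates are pairwise distinct, so no sum or difference of two of them vanishes.
\item Pairings with $\tfrac12(\pm e_1\pm e_2\pm e_3)$ equal $\tfrac14(\pm5\pm3\pm1)$, a sum of three odd integers, which is odd and hence non-zero.
\end{itemize}

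For $G_2$, I would not compute coordinates but use that the fundamental weights are positive multiples of the highest root and the highest short root. Their $W_{\mathrm{fin}}$-orbits then consist of multiples of roots. Since $\rho$ is regular (strictly dominant), it is not orthogonal to any root, and we are done. If one prefers, this can also be checked directly in Bourbaki's coordinates in the plane $x_1+x_2+x_3=0$, using $\rho=(-1,-2,3)$ and the twelve roots.

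The only real subtlety is the reformulation in the first step: correctly identifying $\Span\bigl(\Phi_{S_{\mathrm{fin}}\setminus\{\alpha_i\}}\bigr)$ with $\omega_i^\perp$, and making sure that conjugating by $w$ moves the weight rather than the subspace. After that the verification is routine arithmetic. The main care needed is in listing the orbits of the fundamental weights completely, especially the half-integral weight $\omega_3$ of $B_3$.
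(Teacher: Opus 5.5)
Your proposal is correct and is essentially the paper's argument: both reduce, via Lemma~\ref{lem:reduction}, to checking that $\rho$ avoids the $W_{\mathrm{fin}}$-orbits of the hyperplanes $\Span(\Phi_{S_{\mathrm{fin}}\setminus\{\alpha_i\}})=\omega_i^\perp$. Your pairings with $W_{\mathrm{fin}}\cdot\omega_i$ in type $B_3$ are exactly the paper's three checks $\{x_i=0\}$, $\{x_i\pm x_j=0\}$, $\{\pm x_1\pm x_2\pm x_3=0\}$. The only difference is in rank two: the paper lists the root lines directly and compares ratios (e.g.\ $5:3$ for $G_2$), whereas you use that the $G_2$ fundamental weights are roots and that $\rho$ is regular, which reaches the same conclusion without coordinates.
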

	
	\begin{proof}
		Assume that \((W_{\mathrm{fin}}, S_{\mathrm{fin}})\)  is of type $B_2=C_2$. With the standard realization $\Phi=\{\pm e_1,\pm e_2,\pm e_1\pm e_2\}$ and positive roots $\Phi_+=\{e_1,e_2, e_1\pm e_2\}$, we have $\rho=\tfrac32 e_1 +\tfrac12 e_2$.  Proper root subspaces are the lines $\bbR e_1,\bbR e_2,\bbR(e_1+e_2),\bbR(e_1-e_2)$, none of which contains $\rho$.
		
		Assume now that \((W_{\mathrm{fin}}, S_{\mathrm{fin}})\) is of type $G_2$. Let $\alpha,\beta$ be the simple roots, with $\alpha$ short and $\beta$ the long root. 
		The six positive roots are 
		\[\Phi^+ = \{\alpha,\beta,\alpha+\beta,2\alpha+\beta,3\alpha+\beta,3\alpha+2\beta\},\]
		so that $\rho=5\alpha+3\beta$.  Proper root subspaces of $V=\bbR^2$ are
		one-dimensional and are the lines $\bbR r$ for each positive root $r$.
		The vector $5\alpha+3\beta$ is not a scalar multiple of any of these:
		the ratios $\alpha:\beta$ of $r$ for $r$ running over the positive
		roots are $\{1:0,\;0:1,\;1:1,\;2:1,\;3:1,\;3:2\}$, none equal to $5:3$.
		
		Assume finally that \((W_{\mathrm{fin}}, S_{\mathrm{fin}})\) is of type $B_3$. Here $\rho=(\tfrac52,\tfrac32,\tfrac12)$.  By Lemma~\ref{lem:reduction},
		it suffices to check the three maximal standard parabolic spans
		\begin{align}
			& \Span(\Phi_{S\setminus\{\alpha_1\}}) = \{x_1=0\}, \nonumber \\
			& \Span(\Phi_{S\setminus\{\alpha_2\}}) = \{x_1+x_2=0\}, \nonumber \\
			& \Span(\Phi_{S\setminus\{\alpha_3\}}) = \{x_1+x_2+x_3=0\}, \nonumber
		\end{align}
		
		together with their $W_{\mathrm{fin}}$-orbits under signed permutations.
		\begin{itemize}
			\item $W_{\mathrm{fin}}$-orbit of $\{x_1=0\}$: hyperplanes $\{x_i=0\}$.  All three
			coordinates of $\rho$ are nonzero, hence $\rho$ is in none of them.
			\item $W_{\mathrm{fin}}$-orbit of $\{x_1+x_2=0\}$: hyperplanes $\{x_i\pm x_j=0\}$ for
			$i\ne j$.  Containment requires
			$\pm\tfrac52\pm\tfrac32=0$, $\pm\tfrac52\pm\tfrac12=0$ or
			$\pm\tfrac32\pm\tfrac12=0$, none of which holds.
			\item $W_{\mathrm{fin}}$-orbit of $\{x_1+x_2+x_3=0\}$: hyperplanes
			$\{\epsilon_1 x_1+\epsilon_2 x_2+\epsilon_3 x_3 = 0\}$, $\epsilon_i\in\{\pm1\}$.
			Containment requires $\pm 5\pm 3\pm 1=0$; the possible values are
			$\pm9,\pm7,\pm3,\pm1$, none zero.
		\end{itemize}
	\end{proof}
	
	\subsubsection{Coxeter types where root-flats are perpendicular to bisectors}
	\label{subsec::perp_bisectors}
	
	\begin{prop}
		\label{prop:An}
		For every $n\ge 2$, the root system $A_n$ violates the conditions of Proposition~\ref{prop:geom_criterion}, that is, there are root-flats that are perpendicular to bisectors.
	\end{prop}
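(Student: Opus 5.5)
By Proposition~\ref{prop:geom_criterion} and Lemma~\ref{lem:reduction}, it suffices to exhibit, for each $n\ge 2$, a proper root subspace of $V$ containing $\rho$. Equivalently, we can find $w\in W_{\mathrm{fin}}$ and a simple root $\alpha_i$ with $w\rho\in\Span(\Phi_{S_{\mathrm{fin}}\setminus\{\alpha_i\}})$. I will use the standard realization of $A_n$ in $V=\{x\in\bbR^{n+1}: x_1+\dots+x_{n+1}=0\}$, with roots $e_i-e_j$ for $i\neq j$. In these coordinates
\[
\rho=\Bigl(\tfrac n2,\tfrac n2-1,\dots,-\tfrac n2\Bigr),
\]
and $W_{\mathrm{fin}}=S_{n+1}$ acts by permuting coordinates.

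The proper root subspaces of $V$ are easy to describe. For any partition of $\{1,\dots,n+1\}$ into at least two nonempty blocks $B_1,\dots,B_k$, the roots $e_i-e_j$ with $i,j$ in the same block span
\[
\{x\in V:\ \textstyle\sum_{i\in B_\ell}x_i=0 \text{ for all } \ell\}.
\]
This subspace is proper because $k\ge2$. So the task reduces to finding a nonempty proper subset $B\subsetneq\{1,\dots,n+1\}$ whose coordinates of $\rho$ sum to zero.

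The coordinates of $\rho$ are the values $\tfrac n2-m$ for $m=0,\dots,n$, and they are symmetric about $0$. Take $B=\{1,n+1\}$, which picks out the two values $\pm\tfrac n2$. These sum to $0$, and $B$ is proper since $n+1\ge3$. Hence $\rho$ lies in the proper root subspace spanned by
\[
T=\{\pm(e_i-e_j):\ i,j\in B\}\ \cup\ \{\pm(e_i-e_j):\ i,j\notin B\}.
\]
As a sanity check, for $n=2$ we get $\rho=(1,0,-1)=e_1-e_3$, which is itself a root. This recovers the $\widetilde A_2$ observation made before the proposition: there the wall perpendicular to $e_1-e_3$ is perpendicular to a bisector.

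Finally I will translate this back to geometry. The subspace $L=\Span(T)^\perp=\bigcap_{\alpha\in T}H_\alpha$ is a root-flat through the origin. It is not a point because $\Span(T)$ is proper, and it is perpendicular to the bisector in direction $\rho$. The only point needing care is confirming that $\Span(T)\neq V$. For this, note that $\Span(T)$ has dimension $(|B|-1)+(n+1-|B|-1)=n-1<n$.
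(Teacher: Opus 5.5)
Your proof is correct and is essentially the paper's argument: the paper also uses the antisymmetry $\rho_i=-\rho_{n+2-i}$ of the coordinates of $\rho$ and takes $I=\{i,n+2-i\}$, so that $\rho$ lies in the hyperplane $\{x\in V:\sum_{i\in I}x_i=0\}$; your $B=\{1,n+1\}$ is the case $i=1$. The only difference is cosmetic: the paper recognizes this hyperplane as a Weyl-conjugate of the maximal parabolic span $\Span(\Phi_{S_{\mathrm{fin}}\setminus\{\alpha_{|I|}\}})$, whereas you check directly that it is spanned by the roots internal to $B$ and to its complement.
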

	
	\begin{proof}
		
		We work in  $V = \left\{ x \in \mathbb{R}^{n+1} \;\middle|\; \sum_{i=1}^{n+1} x_i = 0 \right\}$
		with the standard basis $(e_1,\dots,e_{n+1})$. The root system of type $A_n$ is 
		\[
		\Phi(A_n)
		=
		\{ \pm (e_i - e_j) \mid 1 \le i < j \le n+1 \}.
		\]
		
		The standard choice of positive roots is 
		\[
		\Phi^+(A_n)
		=
		\{ e_i - e_j \mid 1 \le i < j \le n+1 \}.
		\]
		
		The standard choice of the simple roots is
		\[
		\alpha_i = e_i - e_{i+1}, \quad i=1,\dots,n.
		\]
		
		The Weyl group of type $A_n$ is the group of permutations $
		W(A_n) \cong S_{n+1}$.
		
		For $k=1,\dots,n$, the lines corresponding to maximal parabolic subgroups, thus removing $\alpha_k$, is
		\[
		L_k = \mathbb{R} v_k,
		\]
		where
		\[
		v_k =
		(\underbrace{n+1-k,\dots,n+1-k}_{k},\;
		\underbrace{-k,\dots,-k}_{n+1-k}).
		\]
		
		For $k=1,\dots,n$, the hyperplane perpendicular to $L_k$ is
		\[
		H_k
		=
		\{ x \in V \mid \langle x, v_k \rangle = 0 \}.
		\]
		A direct computation gives
		\[
		\langle x, v_k \rangle
		= (n+1-k)(x_1 + \cdots + x_k)
		- k(x_{k+1} + \cdots + x_{n+1}).
		\]
		Using the relation $\sum_{i=1}^{n+1} x_i = 0$, this simplifies to $\langle x, v_k \rangle = (n+1)(x_1 + \cdots + x_k)$.
		Hence,
		\[
		H_k
		=
		\left\{ x \in V \;\middle|\; x_1 + \cdots + x_k = 0 \right\}.
		\]
		
		The $W(A_n)$-orbit of $H_k$ is
		\[
		\mathcal{H}_k
		=
		\left\{
		x \in V \;\middle|\;
		\sum_{i \in I} x_i = 0
		\;\middle|\;
		I \subset \{1,\dots,n+1\},\ |I| = k
		\right\}.
		\]
		Thus the $W(A_n)$-orbits are determined by the size $k$, i.e.\ subsets of $\{1,\dots,n+1\}$.
		
		Notice that the Weyl vector is $\rho_{A_n} = \frac12 \sum_{\alpha \in \Phi^+} \alpha
		= \sum_{i=1}^{n+1} \frac{n+2-2i}{2}\, e_i$. We determine for which hyperplanes $H_I = \left\{ x \in V \;\middle|\; \sum_{i \in I} x_i = 0 \right\}$ the Weyl vector $\rho_{A_n} = \tfrac{1}{2}(n, n-2, \dots, -n)$ lies.
		
		A direct computation shows that
		\[
		\sum_{i \in I} \rho_i = \sum_{i \in I} \frac{n+2-2i}{2}
		= \frac{|I|(n+2)}{2} - \sum_{i \in I} i.
		\]
		Hence,
		\[
		\rho_{A_n} \in H_I
		\quad \Longleftrightarrow \quad
		\sum_{i \in I} i = \frac{|I|(n+2)}{2}.
		\]
		
		Equivalently, this means that the average of the indices in $I$
		is equal to $\frac{n+2}{2}$. More precisely,  for any $n$ and any $i$, the subset $I = \{i,\, n+2-i\}$
		satisfies $\rho_{A_n} \in H_I$.
	\end{proof}
	
	\begin{prop}
		\label{prop:Bn}
		For every $n\ge 4$, the root system $B_n$ violates the conditions of Proposition~\ref{prop:geom_criterion}, that is, there are root-flats that are perpendicular to bisectors.
	\end{prop}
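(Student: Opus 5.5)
We need to show: for $B_n$ with $n\ge 4$, some element of the $W_{\mathrm{fin}}$-orbit of a maximal standard parabolic span hyperplane contains $\rho$. By Lemma~\ref{lem:reduction} and Proposition~\ref{prop:geom_criterion}, it suffices to exhibit one hyperplane of the form $w\,\Span(\Phi_{S_{\mathrm{fin}}\setminus\{\alpha_k\}})$ containing $\rho$. Equivalently, we need a proper root subspace containing $\rho$.

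We use Bourbaki's realization: $\Phi(B_n)=\{\pm e_i,\ \pm e_i\pm e_j\}$, simple roots $\alpha_i=e_i-e_{i+1}$ for $i<n$ and $\alpha_n=e_n$, and $W_{\mathrm{fin}}$ acting by signed permutations. Then
\[
\rho=\bigl(n-\tfrac12,\,n-\tfrac32,\,\dots,\,\tfrac12\bigr),
\]
or after scaling by $2$, the vector $(2n-1,2n-3,\dots,3,1)$.

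As in the $B_3$ computation, the span $\Span(\Phi_{S_{\mathrm{fin}}\setminus\{\alpha_k\}})$ is the hyperplane orthogonal to the fundamental coweight direction $e_1+\dots+e_k$, namely $\{x_1+\dots+x_k=0\}$. Its $W_{\mathrm{fin}}$-orbit is the set of hyperplanes $\{\sum_{i\in I}\epsilon_i x_i=0\}$ with $|I|=k$ and $\epsilon_i\in\{\pm1\}$.

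So the task reduces to a combinatorial statement: find a subset $I\subseteq\{1,\dots,n\}$ and signs $\epsilon_i$ such that $\sum_{i\in I}\epsilon_i(2n+1-2i)=0$. Only proper subsets are allowed when $k<n$, but $k=n$ (the full set with signs) is also fine, since that hyperplane is still proper.

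Explicit witnesses:
\begin{itemize}
\item For $n\ge 4$, the odd numbers $7,5,3,1$ all occur among the coordinates of $2\rho$, and $7-5-3+1=0$.
\item Alternatively, $1+3+5+\dots$ gives $3+5-7-1=0$, which is the same relation.
\end{itemize}
Concretely, take the four coordinates with values $\tfrac72,\tfrac52,\tfrac32,\tfrac12$, namely indices $n-3,n-2,n-1,n$, with signs $(+,-,-,+)$. The hyperplane $\{x_{n-3}-x_{n-2}-x_{n-1}+x_n=0\}$ is the image of $\{x_1+x_2+x_3+x_4=0\}$ under a signed permutation, so it lies in the orbit for $k=4\le n$. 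It contains $\rho$, because $\tfrac72-\tfrac52-\tfrac32+\tfrac12=0$. By Lemma~\ref{lem:reduction}, $\rho$ therefore lies in a proper root subspace, and Proposition~\ref{prop:geom_criterion} then yields a root-flat perpendicular to a bisector.

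The only step needing care is identifying $\Span(\Phi_{S_{\mathrm{fin}}\setminus\{\alpha_k\}})$ with $\{x_1+\dots+x_k=0\}$ for general $n$, as was done for $B_3$. We verify this directly. The roots $e_i-e_{i+1}$ for $i<k$ and the full $B_{n-k}$ subsystem on coordinates $k+1,\dots,n$ all lie in that hyperplane. They span a space of dimension $(k-1)+(n-k)=n-1$, so they span the whole hyperplane.

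We also note why $n=3$ fails, consistent with the earlier proposition: the odd numbers $5,3,1$ admit no signed vanishing sum. Hence the four-term relation above is exactly what requires $n\ge4$.
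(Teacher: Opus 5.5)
Your proof is correct and is essentially the paper's argument: both exhibit a signed-permutation conjugate of $H_4=\{x_1+x_2+x_3+x_4=0\}$ that contains $\rho_{B_n}$. The paper uses the first four coordinates via $e_1-e_2-e_3+e_4$, while you use the last four via $e_{n-3}-e_{n-2}-e_{n-1}+e_n$; this is the same identity $a-(a-2)-(a-4)+(a-6)=0$ on consecutive odd numerators, and your extra check that $\Span(\Phi_{S_{\mathrm{fin}}\setminus\{\alpha_k\}})=\{x_1+\dots+x_k=0\}$ fills in a step the paper only asserts.
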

	
	\begin{proof}
		We work in $V = \mathbb{R}^n$ with the standard orthonormal basis $(e_1,\dots,e_n)$. The root system of type $B_n$ is
		\[
		\Phi(B_n)
		=
		\{ \pm e_i \mid 1 \le i \le n \}
		\;\cup\;
		\{ \pm e_i \pm e_j \mid 1 \le i < j \le n \}.
		\]
		
		A standard choice of positive roots is
		\[
		\Phi^+(B_n)
		=
		\{ e_i \mid 1 \le i \le n \}
		\;\cup\;
		\{ e_i \pm e_j \mid 1 \le i < j \le n \}.
		\]
		
		A standard choice of the simple roots is given by
		\[
		\alpha_1 = e_1 - e_2,\quad
		\alpha_2 = e_2 - e_3,\quad \dots,\quad
		\alpha_{n-1} = e_{n-1} - e_n,\quad
		\alpha_n = e_n.
		\]
		
		The Weyl group of type $B_n$ is the group of signed permutations $
		W(B_n) \cong (\mathbb{Z}_2)^n \rtimes S_n$.  The lines corresponding to maximal parabolic subgroups are
		\[
		L_k = \mathbb{R}v_k, \qquad
		v_k = (\underbrace{1,\dots,1}_{k},0,\dots,0),
		\quad k=1,\dots,n.
		\]
		
		The hyperplane perpendicular to $L_k$ is
		\[
		\Span(\Phi_{S\setminus\{\alpha_k\}}) = H_k = v_k^\perp
		=
		\left\{ x \in \mathbb{R}^n \;\middle|\; x_1 + \cdots + x_k = 0 \right\}.
		\]
		
		Under the action of $W(B_n)$, the orbit of $L_k$ consists of all lines
		\[
		\mathcal{O}_k
		=
		\left\{
		\mathbb{R}(\pm e_{i_1} \pm \cdots \pm e_{i_k})
		\;\middle|\;
		1 \le i_1 < \cdots < i_k \le n
		\right\}.
		\]
		
		Similarly, the orbit of $H_k$ consists of all hyperplanes
		\[
		\left\{
		x \in \mathbb{R}^n \;\middle|\;
		\sum_{i \in I} \varepsilon_i x_i = 0
		\right\},
		\quad |I| = k,\ \varepsilon_i \in \{\pm 1\}.
		\]
		
		Notice that the Weyl vector is $\rho_{B_n}=\bigl(\tfrac{2n-1}{2},\tfrac{2n-3}{2},\dots,\tfrac12\bigr)$. Then by the above description of the $W(B_n)$-orbits of $\Span(\Phi_{S\setminus\{\alpha_k\}})$, and since $1-3-5+7=0$, we have
		\[
		\rho_{B_n}\cdot(e_1-e_2-e_3+e_4)
		= \tfrac{(2n-1)-(2n-3)-(2n-5)+(2n-7)}{2}=0,
		\]
		so $\rho_{B_n}\in (e_1-e_2-e_3+e_4)^\perp$, which is the Weyl-conjugate $\{x_1-x_2-x_3+x_4=0\}$ of the hyperplane $H_4=\Span(\Phi_{S\setminus\{\alpha_4\}})$. This is a proper root subspace, so $\rho_{B_n}$ lies in a proper root subspace.
	\end{proof}
	
	\begin{prop}
		\label{prop:Cn} 
		For every $n\ge 3$, the root system $C_n$ violates the conditions of Proposition~\ref{prop:geom_criterion}, that is, there are root-flats that are perpendicular to bisectors.
	\end{prop}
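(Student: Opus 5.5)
The plan is to follow the pattern of the proofs of Propositions~\ref{prop:An} and~\ref{prop:Bn}. By Lemma~\ref{lem:reduction} and Proposition~\ref{prop:geom_criterion}, it suffices to show that the Weyl vector \(\rho_{C_n}\) lies in a proper root subspace. So I will find an explicit proper subset \(T\subsetneq\Phi(C_n)\) whose span is a hyperplane containing \(\rho_{C_n}\).

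\textbf{Setup.} I will use Bourbaki's realization in \(V=\mathbb{R}^n\):
\[
\Phi(C_n)=\{\pm e_i\pm e_j\mid i<j\}\cup\{\pm 2e_i\},
\qquad
\Phi^+(C_n)=\{e_i\pm e_j\mid i<j\}\cup\{2e_i\}.
\]
The simple roots are \(\alpha_i=e_i-e_{i+1}\) for \(i<n\), together with \(\alpha_n=2e_n\). The Weyl group is again the group of signed permutations \((\mathbb{Z}_2)^n\rtimes S_n\). Summing the positive roots gives
\[
\rho_{C_n}=(n,n-1,\dots,1),
\]
equivalently \(\rho_{C_n}=\sum_i\omega_i\) with \(\omega_k=e_1+\dots+e_k\).

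\textbf{Reduction to hyperplanes.} As in type \(B_n\), the maximal standard parabolic spans are
\[
\Span\bigl(\Phi_{S\setminus\{\alpha_k\}}\bigr)=\{x_1+\dots+x_k=0\}.
\]
Indeed, removing \(\alpha_k\) leaves a root system of type \(A_{k-1}\times C_{n-k}\), whose span is the orthogonal complement of \(\omega_k\). Their \(W_{\mathrm{fin}}\)-orbits are therefore the hyperplanes \(\{\sum_{i\in I}\varepsilon_ix_i=0\}\) with \(|I|=k\) and \(\varepsilon_i=\pm1\). So the task becomes finding a signed subset sum of the coordinates \(n,n-1,\dots,1\) that vanishes.

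\textbf{The key identity.} The smallest such relation is \(3-2-1=0\), and it uses the last three coordinates, which exist precisely when \(n\ge 3\). Hence
\[
\rho_{C_n}\in\{x_{n-2}-x_{n-1}-x_n=0\},
\]
and this hyperplane is a \(W_{\mathrm{fin}}\)-conjugate of \(H_3=\{x_1+x_2+x_3=0\}\).

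\textbf{Making it self-contained.} To avoid relying only on the orbit description, I will also exhibit the spanning set directly. Take \(T\) to consist of the roots \(e_{n-2}+e_{n-1}\), \(e_{n-2}+e_n\), \(e_{n-1}-e_n\), together with all roots \(\pm e_i\pm e_j\) and \(\pm2e_i\) supported on the coordinates \(i,j\le n-3\). I will check three things:
\begin{enumerate}
\item Each root in \(T\) satisfies \(x_{n-2}=x_{n-1}+x_n\).
\item The roots supported on the first \(n-3\) coordinates span \(\mathbb{R}^{n-3}\).
\item The three listed vectors are linearly independent.
\end{enumerate}
Together these give \(\dim\Span(T)=n-1\). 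Since \(\Span(T)\) is contained in the hyperplane, it equals it, so it is a proper root subspace containing \(\rho_{C_n}\).

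\textbf{Main obstacle.} No step here is genuinely hard; the computation is short. The only places needing care are getting \(\rho_{C_n}\) right (the long roots \(2e_i\) shift the coordinates to integers, unlike \(B_n\)) and confirming the dimension count in the spanning argument for small \(n\). In the edge case \(n=3\), no coordinates \(i\le n-3\) exist, and the three listed roots alone span the plane \(x_1=x_2+x_3\).
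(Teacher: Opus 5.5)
Your proof is correct and follows the paper's route. Both arguments show that $\rho_{C_n}=(n,n-1,\dots,1)$ lies in a signed-permutation conjugate of a maximal standard parabolic span, by exhibiting a signed sum of its coordinates that vanishes. The only difference is that you apply $3-2-1=0$ to the last three coordinates uniformly for all $n\ge 3$. The paper instead uses that identity only for $n=3$, and for $n\ge 4$ uses $-n+(n-1)+(n-2)-(n-3)=0$, landing in a conjugate of $H_4$. Your version avoids the case split.

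There is one slip, in your optional self-contained check. Item (3) is false: $e_{n-1}-e_n=(e_{n-2}+e_{n-1})-(e_{n-2}+e_n)$. More structurally, three independent vectors supported on the last three coordinates could not all satisfy $x_{n-2}=x_{n-1}+x_n$, so (3) contradicts your item (1). With (3) as stated, your dimension count would give $(n-3)+3=n$, not $n-1$. Replace (3) by the statement that any two of the three vectors are independent, so together they span a $2$-dimensional space. Then $\dim\Span(T)=(n-3)+2=n-1$, as you need, and this matches your own remark that for $n=3$ these roots span the plane $x_1=x_2+x_3$. Your main orbit argument does not depend on this check.
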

	
	\begin{proof}
		We work in $V = \mathbb{R}^n$ with basis $(e_1,\dots,e_n)$. The root system of type $C_n$ is
		\[\Phi(C_n)
		=
		\{ \pm 2e_i \mid 1 \le i \le n \}
		\;\cup\;
		\{ \pm e_i \pm e_j \mid 1 \le i < j \le n \}.
		\]
		
		A standard choice of positive roots is
		\[
		\Phi^+(C_n)
		=
		\{ 2e_i \mid 1 \le i \le n \}
		\;\cup\;
		\{ e_i \pm e_j \mid 1 \le i < j \le n \}.
		\]
		
		A standard choice of the simple roots is given by
		\[
		\alpha_1 = e_1 - e_2,\quad \dots,\quad
		\alpha_{n-1} = e_{n-1} - e_n,\quad
		\alpha_n = 2e_n.
		\]
		
		The Weyl group of type $C_n$ is the group of signed permutations $ W(C_n) = W(B_n) \cong (\mathbb{Z}_2)^n \rtimes S_n$.  The lines corresponding to maximal parabolic subgroups are the same as in the case of $B_n$:
		\[
		L_k = \mathbb{R}(\underbrace{1,\dots,1}_{k},0,\dots,0), \quad k=1,\dots,n.
		\]
		
		The hyperplane perpendicular to $L_k$ is the same as in the case of $B_n$
		\[
		\Span(\Phi_{S\setminus\{\alpha_k\}}) = H_k = v_k^\perp
		=
		\left\{ x \in \mathbb{R}^n \;\middle|\; x_1 + \cdots + x_k = 0 \right\}.
		\]
		The $W(C_n)$-orbits of those lines and hyperplanes are identical to the case of $B_n$.
		
		Notice that the Weyl vector is $\rho_{C_n}=(n,n-1,\dots,1)$. Then by the above description of the $W(C_n)$-orbits of $\Span(\Phi_{S\setminus\{\alpha_k\}})$:
		\begin{itemize}
			\item For $n\ge 4$, the identity $-n+(n-1)+(n-2)-(n-3)=0$ gives
			\[
			\rho_{C_n}\cdot(-e_1+e_2+e_3-e_4)=0,
			\]
			so $\rho_{C_n}\in(-e_1+e_2+e_3-e_4)^\perp=\{-x_1+x_2+x_3-x_4=0\}$, a Weyl-conjugate of $H_4=\Span(\Phi_{S\setminus\{\alpha_4\}})$, hence a proper root subspace.
			\item For $n=3$, the identity $3-2-1=0$ gives
			\[
			\rho_{C_3}\cdot(e_1-e_2-e_3)=0,
			\]
			so $\rho_{C_3}\in(e_1-e_2-e_3)^\perp=\{x_1-x_2-x_3=0\}$, a Weyl-conjugate of $H_3=\Span(\Phi_{S\setminus\{\alpha_3\}})$, hence a proper root subspace.
		\end{itemize}
	\end{proof}
	
	\begin{prop}\label{prop:Dn} 
		For every $n\ge 3$, the root system $D_n$ violates the conditions of Proposition~\ref{prop:geom_criterion}, that is, there are root-flats that are perpendicular to bisectors.
	\end{prop}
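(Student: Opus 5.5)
The plan is to follow the same pattern as in Propositions~\ref{prop:An}, \ref{prop:Bn} and \ref{prop:Cn}: fix Bourbaki's standard realization of $D_n$, compute the Weyl vector, and exhibit an explicit proper root subspace containing it. This violates condition (2) of Proposition~\ref{prop:geom_criterion}.

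The realization is in $V=\mathbb{R}^n$ with
\[
\Phi(D_n)=\{\pm e_i\pm e_j \mid 1\le i<j\le n\},\qquad \Phi^+(D_n)=\{e_i\pm e_j\mid i<j\},
\]
and simple roots $\alpha_k=e_k-e_{k+1}$ for $1\le k\le n-1$, together with $\alpha_n=e_{n-1}+e_n$. The Weyl vector is computed coordinatewise. The coordinate $e_i$ occurs with coefficient $+1$ in the $2(n-i)$ roots $e_i\pm e_j$ with $j>i$. It cancels in pairs in the roots $e_j\pm e_i$ with $j<i$. Hence
\[
\rho_{D_n}=(n-1,n-2,\dots,1,0).
\]

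The key observation is that the last coordinate of $\rho_{D_n}$ vanishes. I would therefore take
\[
T:=\{\pm e_i\pm e_j \mid 1\le i<j\le n-1\}\subsetneq\Phi,
\]
the root subsystem of type $D_{n-1}$ consisting of the roots not involving $e_n$.

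For $n\ge 3$ the set $T$ spans exactly the hyperplane $\{x_n=0\}$. To see this, note that for any $i<j\le n-1$ the vectors $e_i+e_j$ and $e_i-e_j$ give $e_i$ and $e_j$; in the case $n=3$ this is just $e_1\pm e_2$. So $\Span(T)=\{x_n=0\}\subsetneq V$ is a proper root subspace, and $\rho_{D_n}\in\Span(T)$.

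Equivalently, in the language of Lemma~\ref{lem:reduction}: the hyperplane $\Span(\Phi_{S\setminus\{\alpha_1\}})=\{x_1=0\}$ is sent to $\{x_n=0\}$ by the permutation $w\in W(D_n)$ exchanging coordinates $1$ and $n$. This gives $w^{-1}\rho_{D_n}\in\Span(\Phi_{S\setminus\{\alpha_1\}})$.

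The only point needing care is that $T$ really spans a full hyperplane. This requires $n-1\ge 2$, so it is exactly where the hypothesis $n\ge3$ enters. There is no real obstacle beyond correctly computing $\rho_{D_n}$. By Proposition~\ref{prop:geom_criterion}, the conclusion yields a root-flat, namely the line $\Span(T)^\perp=\mathbb{R}e_n$, that is perpendicular to the bisector in direction $\rho_{D_n}$.
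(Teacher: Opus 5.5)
Your proof is correct, and it is cleaner than the paper's. The paper also computes $\rho_{D_n}=(n-1,\dots,1,0)$, but it then splits into cases modelled on the $B_n$ and $C_n$ computations, where no coordinate of $\rho$ vanishes and a signed identity is needed:
\begin{itemize}
\item for $n\ge 6$ it uses $\rho_{D_n}\perp e_1-e_2-e_3+e_4$, a conjugate of $H_4$;
\item for $n=5$ it uses $e_1-e_2-e_4$, a conjugate of $H_3$;
\item for $n=4$ it uses $e_1-e_2-e_3$;
\item only for $n=3$ does it use the vanishing last coordinate.
\end{itemize}
You observe that the last argument works for every $n\ge 3$. The hyperplane $\{x_n=0\}$ is the $W(D_n)$-conjugate of $\Span(\Phi_{S\setminus\{\alpha_1\}})=\{x_1=0\}$, and it is spanned by the $D_{n-1}$-subsystem. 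The resulting root-flat is just the line $\mathbb{R}e_n$.

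Besides removing the case split, your route avoids a flaw in the paper's $n=4$ case. In $D_4$ one has $H_3=\{x_1+x_2+x_3-x_4=0\}$, and all its $W(D_4)$-conjugates have four nonzero coefficients. The paper's orbit description for $k=n-1$, which uses $|I|=n-1$, is the source of this slip. So $(e_1-e_2-e_3)^\perp$ is not a conjugate of $H_3$. It is not a root subspace at all: the only roots it contains are $\pm(e_1+e_2),\pm(e_1+e_3),\pm(e_2-e_3)$, which span only a plane. The conclusion for $D_4$ still holds, for instance by your argument, or since $\rho_{D_4}=2(e_1+e_2)+(e_1+e_3)$. So your uniform treatment is the more robust one.
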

	
	\begin{proof}
		We work in $V = \mathbb{R}^n$ with basis $(e_1,\dots,e_n)$. The root system of type $D_n$ is
		\[
		\Phi(D_n)
		=
		\{ \pm e_i \pm e_j \mid 1 \le i < j \le n \}.
		\]
		
		A standard choice of positive roots is
		\[
		\Phi^+(D_n)
		=
		\{ e_i \pm e_j \mid 1 \le i < j \le n \}.
		\]
		
		A standard choice of the simple roots is given by
		\[
		\alpha_1 = e_1 - e_2,\quad \dots,\quad
		\alpha_{n-1} = e_{n-1} - e_n,\quad
		\alpha_n = e_{n-1} + e_n.
		\]
		
		The Weyl group of type $D_n$ is the group of even signed permutations $ W(D_n) =
		\left\{
		(\varepsilon_i,\sigma) \in (\mathbb{Z}_2)^n \rtimes S_n
		\;\middle|\;
		\prod_{i=1}^n \varepsilon_i = 1
		\right\},
		$.  
		
		The lines corresponding to maximal parabolic subgroups are
		\[
		L_k =
		\begin{cases}
			\mathbb{R}(\underbrace{1,\dots,1}_{k},0,\dots,0),
			& 1 \le k \le n-2, \\[6pt]
			\mathbb{R}(1,\dots,1,-1),
			& k = n-1, \\[6pt]
			\mathbb{R}(1,\dots,1,1),
			& k = n.
		\end{cases}
		\]
		
		The hyperplanes perpendicular to the above lines are
		\[
		H_k =
		\begin{cases}
			\left\{ x \in \mathbb{R}^n \;\middle|\; x_1 + \cdots + x_k = 0 \right\},
			& 1 \le k \le n-2, \\[6pt]
			\left\{ x \in \mathbb{R}^n \;\middle|\; x_1 + \cdots + x_{n-1} - x_n = 0 \right\},
			& k = n-1, \\[6pt]
			\left\{ x \in \mathbb{R}^n \;\middle|\; x_1 + \cdots + x_n = 0 \right\},
			& k = n.
		\end{cases}
		\]
		
		Notice that under the action of $W(D_n)$, we can:
		\begin{itemize}
			\item permute indices, sending any subset $I \subset \{1,\dots,n\}$ with $|I|=k$ to another such subset;
			\item flip signs, introducing factors $\varepsilon_i \in \{\pm1\}$, subject to the constraint that the total number of sign changes is even.
		\end{itemize}
		
		The $W(D_n)$-orbits of those hyperplanes are as follows.
		\begin{itemize}
			\item For $1 \le k \le n-2$, there is a single orbit consisting of all hyperplanes
			\[
			\left\{
			x \in \mathbb{R}^n \;\middle|\;
			\sum_{i \in I} \varepsilon_i x_i = 0
			\right\},
			\quad |I| = k,\ \varepsilon_i = \pm1.
			\]
			
			\item For $k = n-1$, there is a single orbit consisting of all hyperplanes
			\[
			\left\{
			x \in \mathbb{R}^n \;\middle|\;
			\sum_{i \in I} \varepsilon_i x_i = 0
			\right\},
			\quad |I| = n-1,\ \varepsilon_i = \pm1.
			\]
			
			\item For $k = n$, the orbit splits into two distinct $W(D_n)$-orbits:
			\[
			\left\{
			x \in \mathbb{R}^n \;\middle|\;
			\sum_{i=1}^n \varepsilon_i x_i = 0,\;
			\prod_{i=1}^n \varepsilon_i = 1
			\right\}
			\]
			and
			\[
			\left\{
			x \in \mathbb{R}^n \;\middle|\;
			\sum_{i=1}^n \varepsilon_i x_i = 0,\;
			\prod_{i=1}^n \varepsilon_i = -1
			\right\}.
			\]
		\end{itemize}
		
		Notice that the Weyl vector is $\rho_{D_n} = (n-1, n-2, \dots, 1, 0)$.  By the above description of the $W(D_n)$-orbits of $\Span(\Phi_{S\setminus\{\alpha_k\}})$:
		\begin{itemize}
			\item For $n\ge 6$, the identity $(n-1)-(n-2)-(n-3)+(n-4)=0$ gives
			\[
			\rho_{D_n}\cdot(e_1-e_2-e_3+e_4)=0,
			\]
			so $\rho_{D_n}\in(e_1-e_2-e_3+e_4)^\perp$, a Weyl-conjugate of $H_4$, hence a proper root subspace.
			\item For $n=5$, the identity $(5-1)-(5-2)-(5-4)=4-3-1=0$ gives
			\[
			\rho_{D_5}\cdot(e_1-e_2-e_4)=0,
			\]
			so $\rho_{D_5}\in(e_1-e_2-e_4)^\perp$, a Weyl-conjugate of $H_3$, hence a proper root subspace.
			\item For $n=4$, the identity $(4-1)-(4-2)-(4-3)=3-2-1=0$ gives
			\[
			\rho_{D_4}\cdot(e_1-e_2-e_3)=0,
			\]
			so $\rho_{D_4}\in(e_1-e_2-e_3)^\perp$, a Weyl-conjugate of $H_3$, hence a proper root subspace.
		\end{itemize}
		
		For $n=3$, $\rho_{D_3} = (2,1,0)$, so $\rho_{D_3}\in\{x \in \mathbb{R}^3 \mid x_3 = 0\}$ directly. (In fact $D_3\cong A_3$, which already implies that there are root-flats perpendicular to bisectors by Prop.~\ref{prop:An}.)
	\end{proof}

	\begin{prop}
		\label{prop:F4}
		The root system $F_4$ violates the conditions of Proposition~\ref{prop:geom_criterion}, that is, there are root-flats that are perpendicular to bisectors.
	\end{prop}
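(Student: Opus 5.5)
We will use Proposition~\ref{prop:geom_criterion} and show directly that its condition (2) fails for $F_4$. That is, we exhibit a proper subset $T\subsetneq\Phi(F_4)$ with $\Span(T)\subsetneq V=\mathbb{R}^4$ and $\rho_{F_4}\in\Span(T)$. Lemma~\ref{lem:reduction} says such a $T$ can be taken as a $W_{\mathrm{fin}}$-conjugate of some $\Span(\Phi_{S_{\mathrm{fin}}\setminus\{\alpha_i\}})$. In practice it is simpler to find three explicit linearly independent roots whose span contains $\rho_{F_4}$.

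\emph{Set-up.} Use the Bourbaki realization of $F_4$ in $\mathbb{R}^4$. The roots are $\pm e_i$, $\pm e_i\pm e_j$ ($i<j$) and $\tfrac12(\pm e_1\pm e_2\pm e_3\pm e_4)$. The positive roots are $e_i$, $e_i\pm e_j$ ($i<j$) and $\tfrac12(e_1\pm e_2\pm e_3\pm e_4)$, and the simple roots are $e_2-e_3,\ e_3-e_4,\ e_4,\ \tfrac12(e_1-e_2-e_3-e_4)$. The fundamental weights are $e_1+e_2$, $2e_1+e_2+e_3$, $\tfrac12(3e_1+e_2+e_3+e_4)$ and $e_1$. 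Summing them (as in the paper) gives
\[
\rho_{F_4}=\tfrac12(11,5,3,1).
\]

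\emph{Choice of the hyperplane.} The hyperplanes $\Span(\Phi_{S_{\mathrm{fin}}\setminus\{\alpha_i\}})$ are orthogonal complements of the fundamental coweight lines. Their $W_{\mathrm{fin}}$-orbits contain in particular all signed permutations of these lines, since $W(B_4)\subset W(F_4)$. So we look for a signed permutation $v$ of $(2,1,1,0)$ with $\langle\rho,v\rangle=0$. This means solving $2a\pm b\pm c=0$ with $a,b,c$ distinct among $\{11,5,3,1\}$. The choice $a=3$ works, since $6=5+1$. Hence take
\[
v=-e_2+2e_3-e_4,\qquad \langle \rho_{F_4},v\rangle=\tfrac12(-5+6-1)=0 .
\]

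\emph{Roots spanning $v^\perp$.} Rather than rely on the conjugacy statement, check directly that $v^\perp$ is a root subspace. Take
\[
T=\{e_1,\ e_2-e_4,\ \tfrac12(e_1+e_2+e_3+e_4)\}.
\]
Each of these is a root orthogonal to $v$ (for the last one, $\tfrac12(-1+2-1)=0$). They are linearly independent, because only the third has a nonzero $e_3$-coefficient and the first two are clearly independent. Therefore $\Span(T)=v^\perp$, which is a $3$-dimensional proper root subspace containing $\rho_{F_4}$. Proposition~\ref{prop:geom_criterion} then yields a root-flat perpendicular to a bisector.

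\emph{Main obstacle.} The only delicate point is bookkeeping: fixing the correct Bourbaki normalization of $\rho_{F_4}$ and verifying that the chosen vectors really are roots. Once $\rho_{F_4}=\tfrac12(11,5,3,1)$ is confirmed, the arithmetic is a single line. A cross-check with the coweight direction $3e_1+e_2+e_3+e_4$ also works, since $15=11+3+1$ gives $v'=-e_1+3e_2-e_3-e_4$.
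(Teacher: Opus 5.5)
Your proof is correct. With $\rho_{F_4}=\tfrac12(11,5,3,1)$, the roots $e_1$, $e_2-e_4$ and $\tfrac12(e_1+e_2+e_3+e_4)$ are linearly independent and orthogonal to $v=-e_2+2e_3-e_4$. One even has explicitly $\rho_{F_4}=4e_1+(e_2-e_4)+3\cdot\tfrac12(e_1+e_2+e_3+e_4)$.

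Your argument follows the same strategy as the paper: exhibit three independent roots spanning a hyperplane that contains a Weyl conjugate of $\rho$. Only the witness differs. The paper first reflects in the half-root $\alpha=\tfrac12(e_1-e_2-e_3-e_4)$ to get $s_\alpha(\rho)=(5,3,2,1)\in\{x_1-x_2-x_3=0\}=\Span\{e_4,e_1+e_2,e_1+e_3\}$, and then pulls back by $s_\alpha$. Your search in the signed-permutation orbit of $\omega_2^\perp$ instead produces a root hyperplane containing $\rho$ itself, so no pull-back step is needed.
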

	
	\begin{proof}
		In the standard realization, $\rho_{F_4}=(\tfrac{11}{2},\tfrac52,\tfrac32,\tfrac12)$.
		Reflecting in the short root $\alpha=\tfrac12(e_1-e_2-e_3-e_4)$,
		which has $\langle\alpha,\alpha\rangle=1$, gives
		$\langle\rho,\alpha\rangle=\tfrac12$ and hence
		\[
		s_\alpha(\rho) \;=\; \rho - \alpha \;=\; (5,3,2,1).
		\]
		Since $5-3-2=0$, $s_\alpha(\rho)\in\{x_1-x_2-x_3=0\}$.  This hyperplane
		is spanned by the $F_4$-roots $e_4,e_1+e_2,e_1+e_3$ (and contains
		$e_2-e_3$), hence is a proper root subspace.  Pulling back by $s_\alpha$
		shows $\rho_{F_4}$ itself lies in a proper root subspace.
	\end{proof}
	
	\begin{prop}
		\label{prop:E678}
		The root systems  \(E_6\), \(E_7\) and \(E_8\) violate the conditions of
		Proposition~\ref{prop:geom_criterion}, that is, there are root-flats that are perpendicular to bisectors.
	\end{prop}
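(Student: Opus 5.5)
For each of $E_6$, $E_7$, $E_8$ I would exhibit a proper root subspace $U=\Span(T)\subsetneq V$ with $\rho\in U$. By Proposition~\ref{prop:geom_criterion} this gives root-flats perpendicular to bisectors. Because of Lemma~\ref{lem:reduction}, it is enough to find a single vector $v\neq 0$ with $\langle \rho , v\rangle=0$ such that the roots orthogonal to $v$ span the whole hyperplane $v^\perp\cap V$. Such a hyperplane is a root subspace, and in fact a $W_{\mathrm{fin}}$-conjugate of some $\Span(\Phi_{S_{\mathrm{fin}}\setminus\{\alpha_i\}})$. I would work in Bourbaki's realizations inside $\mathbb{R}^8$ (Planches V--VII), exactly as in the previous propositions. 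There $\Phi(E_8)$ consists of the roots $\pm e_i\pm e_j$ together with $\tfrac12\sum_i \pm e_i$ with an even number of minus signs. The root systems $E_7$ and $E_6$ are the subsystems of $E_8$ lying in the subspaces cut out by the linear conditions listed in the tables.

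The Weyl vectors read off from the tables are
\[
\rho_{E_8}=(0,1,2,3,4,5,6,23),\qquad
\rho_{E_7}=(0,1,2,3,4,5,-\tfrac{17}{2},\tfrac{17}{2}),\qquad
\rho_{E_6}=(0,1,2,3,4,-4,-4,4).
\]
In all three cases the first four coordinates are $0,1,2,3$, and $0-1-2+3=0$. This is the same identity used for $B_n$, $C_n$ and $D_n$. So I take the uniform choice $v=e_1-e_2-e_3+e_4$, which satisfies $\langle\rho,v\rangle=0$ in each case, and set $U:=v^\perp\cap V$. Note that $v$ itself lies in $V$ for $E_6$ and $E_7$, since it only involves the first five coordinates. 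The claim is then reduced to showing that the roots of $\Phi$ orthogonal to $v$ span $U$, which has dimension $\operatorname{rk}\Phi-1$.

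For $E_8$ the rank count is immediate. The $D_4$ roots $\pm e_i\pm e_j$ with $5\le i<j\le 8$ span the last four coordinates. The roots $e_1+e_2$, $e_1+e_3$, $e_1-e_4$ and $e_2-e_3$ are orthogonal to $v$ and span the $3$-dimensional space $v^\perp\cap\langle e_1,\dots,e_4\rangle$. Together these give rank $7$, so $U$ is a proper root subspace containing $\rho_{E_8}$.

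For $E_7$ and $E_6$ the same integer roots $e_1+e_2$, $e_1+e_3$, $e_1-e_4$, $e_2-e_3$ still lie in $\Phi$, since their last three coordinates vanish. This already gives rank $3$ inside $U$. The remaining directions have to come from the roots of the form $\pm e_5\pm e_i$ and from the half-integer roots $\tfrac12\sum\epsilon_i e_i$ in $\Phi$ whose signs satisfy $\epsilon_1-\epsilon_2-\epsilon_3+\epsilon_4=0$. Examples are the sign patterns $(+,+,+,+)$ and $(+,-,+,-)$ on the first four coordinates, completed by signs on the last four coordinates that are admissible for $E_7$ or $E_6$. I would list a few such roots explicitly and check that together with the four integer roots they span a space of dimension $6$ for $E_7$ and $5$ for $E_6$.

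This rank verification for $E_6$ and $E_7$ is the only real obstacle, because the ambient constraints defining $V$ restrict which half-integer roots are available. If a direct choice turns out to be awkward, I would fall back on the criterion of Lemma~\ref{lem:reduction}. That means applying a simple reflection to $\rho$, as was done for $F_4$, to obtain a vector $w\rho$ that visibly satisfies one of the linear equations $\sum_{i\in I}\epsilon_i x_i=0$ defining a conjugate of a maximal parabolic span.
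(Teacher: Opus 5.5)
The $E_8$ case of your proposal is complete, and the $E_7$ case goes through. The $E_6$ case has a genuine gap: the rank check you plan there would fail.

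\textbf{$E_7$ and $E_8$.} For $E_8$ your argument is complete as written. For $E_7$ the step you left open is immediate. The roots $e_5-e_6$, $e_5+e_6$ and $e_7-e_8$ of $E_7$ are orthogonal to $v$. Together with $e_1+e_2$, $e_1+e_3$, $e_1-e_4$ they give six independent vectors in the $6$-dimensional space $v^\perp\cap V$.

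\textbf{Where $E_6$ fails.} The verification you flagged as the obstacle does not go through for $E_6$, so $v^\perp\cap V$ is not a root subspace there.
\begin{enumerate}
\item In $E_6$ the integer roots are only $\pm e_i\pm e_j$ with $1\le i<j\le 5$. None of those involving $e_5$ is orthogonal to $v$, because $v_i=\pm1$ for $i\le 4$.
\item A half-integer root $\pm\frac12\bigl(\sum_{i\le 5}\epsilon_i e_i-e_6-e_7+e_8\bigr)$ is orthogonal to $v$ if and only if $\epsilon_1-\epsilon_2-\epsilon_3+\epsilon_4=0$. This forces an even number of minus signs among $\epsilon_1,\dots,\epsilon_4$, hence $\epsilon_5=+1$ by the parity condition.
\item So every such root has the form $\pm(h+u)$, with $h\in W:=v^\perp\cap\langle e_1,\dots,e_4\rangle$ and the fixed vector $u=\frac12(e_5-e_6-e_7+e_8)$.
\end{enumerate}
Consequently the roots orthogonal to $v$ span only the $4$-dimensional space $W\oplus\mathbb{R}u$, not the $5$-dimensional space $v^\perp\cap V$. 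The dimension-$5$ check you propose cannot succeed. Your fallback, reflecting $\rho$ until it satisfies some $\sum_{i\in I}\epsilon_i x_i=0$, is not carried out. As written, the $E_6$ case is therefore not proved.

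\textbf{The missing observation and fix.} Proposition~\ref{prop:geom_criterion}(2) only asks for \emph{some} proper root subspace containing $\rho$; it need not be a hyperplane. And $\rho_{E_6}$ does lie in $W\oplus\mathbb{R}u$. Take the root $\beta=\frac12(e_1+e_2+e_3+e_4+e_5-e_6-e_7+e_8)\in\Phi(E_6)$. Then one checks directly that $\rho_{E_6}=-3(e_1+e_2)-2(e_1+e_3)+(e_1-e_4)+8\beta$. This is a combination of four independent roots, all orthogonal to $v$, so your uniform approach is repaired with this one extra line.

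\textbf{Comparison with the paper.} For $E_6$ the paper uses the $5$-dimensional root subspace $\langle e_1,\dots,e_4\rangle\oplus\mathbb{R}u=\{x\in V : x_5+x_6=0\}$. It is spanned by $e_1+e_2$, $e_1+e_3$, $e_1+e_4$, $e_2+e_3$ and $\beta$, with $\rho_{E_6}=-2(e_1+e_2)-(e_1+e_3)-(e_1+e_4)-(e_2+e_3)+8\beta$. For $E_7$ and $E_8$ the paper uses the hyperplane $\{x_1=0\}$ (intersected with $V$) rather than your $v^\perp$. Both choices work in those two cases.
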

	
	\begin{proof}
		We use Bourbaki's realization of \(E_6\) inside \(\mathbb R^8\), with standard basis \(e_1,\ldots,e_8\).  In this realization the root space is
		\[
		V=\Span_{\mathbb R}\Phi(E_6)
		=
		\{x\in\mathbb R^8 \mid x_6=x_7=-x_8\},
		\]
		so \(\dim V=6\).  The Weyl vector is
		\[
		\rho_{E_6}=(0,1,2,3,4,-4,-4,4).
		\]
		Consider the following five roots of \(E_6\):
		\[
		\begin{aligned}
			\beta_1&=e_1+e_2,\\
			\beta_2&=e_1+e_3,\\
			\beta_3&=e_1+e_4,\\
			\beta_4&=e_2+e_3,\\
			\beta_5&=\frac12(e_1+e_2+e_3+e_4+e_5-e_6-e_7+e_8).
		\end{aligned}
		\]
		The first four roots belong to the \(D_5\)-subsystem \(\{\pm e_i\pm e_j\mid 1\le i<j\le5\}\subset\Phi(E_6)\), and \(\beta_5\) is one of the half-roots in Bourbaki's model.  Let
		\[
		L:=\Span_{\mathbb R}\{\beta_1,\beta_2,\beta_3,\beta_4,\beta_5\}.
		\]
		These five roots are linearly independent.  Indeed, in a relation
		\[
		a_1\beta_1+a_2\beta_2+a_3\beta_3+a_4\beta_4+a_5\beta_5=0,
		\]
		the \(e_6,e_7,e_8\)-coordinates imply \(a_5=0\).  The \(e_4\)-coordinate then gives \(a_3=0\), and the remaining \(e_1,e_2,e_3\)-coordinates give
		\[
		a_1+a_2=0,
		\qquad
		a_1+a_4=0,
		\qquad
		a_2+a_4=0,
		\]
		hence \(a_1=a_2=a_4=0\).  Thus \(\dim L=5\), so \(L\) is a proper root subspace of the six-dimensional root space \(V\).
		
		Finally, a direct calculation gives
		\[
		\rho_{E_6}
		=
		-2\beta_1-\beta_2-\beta_3-\beta_4+8\beta_5.
		\]
		Therefore \(\rho_{E_6}\in L\), and Proposition~\ref{prop:geom_criterion} gives a root-flat perpendicular to a bisector.

		We turn to $E_8$. In this realization the root system is given by
		\[
		\{\pm e_i\pm e_j\mid 1\le i<j\le 8\}
		\cup
		\left\{
		\frac12\sum_{i=1}^8 \varepsilon_i e_i
		\;\middle|\;
		\varepsilon_i\in\{\pm1\},\;
		\#\{i\mid \varepsilon_i=-1\}\equiv 0 \pmod 2
		\right\}.
		\]
		The simple roots are numbered as in Bourbaki:
		\[
		\begin{aligned}
			\alpha_1
			&=
			\frac12(e_1+e_8-e_2-e_3-e_4-e_5-e_6-e_7),\\
			\alpha_2&=e_1+e_2,\\
			\alpha_3&=e_2-e_1,\\
			\alpha_4&=e_3-e_2,\\
			\alpha_5&=e_4-e_3,\\
			\alpha_6&=e_5-e_4,\\
			\alpha_7&=e_6-e_5,\\
			\alpha_8&=e_7-e_6.
		\end{aligned}
		\]
		Since \(E_8\) is simply laced, the Weyl vector is characterized by
		\[
		\langle \rho_{E_8},\alpha_i\rangle=1,
		\qquad i=1,\ldots,8.
		\]
		Summing the fundamental weights gives
		\[
		\rho_{E_8}=(0,1,2,3,4,5,6,23).
		\]
		
		Set
		\[
		U_8:=\{x\in\mathbb R^8\mid x_1=0\}.
		\]
		The roots
		\[
		\pm e_i\pm e_j,
		\qquad 2\le i<j\le 8,
		\]
		belong to \(\Phi(E_8)\) and form a root subsystem of type \(D_7\). This
		subsystem spans \(U_8\). For instance, the seven roots
		\[
		e_2-e_3,\ e_3-e_4,\ e_4-e_5,\ e_5-e_6,\ e_6-e_7,\ e_7-e_8,\ e_7+e_8
		\]
		are linearly independent and all lie in \(U_8\). Hence \(U_8\) is a proper
		root subspace of the \(E_8\)-root space. Since the first coordinate of
		\(\rho_{E_8}\) is zero, we have
		\[
		\rho_{E_8}\in U_8.
		\]
		Thus \(\rho_{E_8}\) lies in a proper root subspace. By
		Proposition~\ref{prop:geom_criterion}, the affine Coxeter complex of type
		\(\widetilde E_8\) admits root-flats perpendicular to bisectors.

		We now treat \(E_7\). In Bourbaki's realization, \(E_7\) is the root
		subsystem of \(E_8\) generated by \(\alpha_1,\ldots,\alpha_7\). Its root
		space is
		\[
		V_7
		=
		\{x\in\mathbb R^8\mid x_7+x_8=0\},
		\]
		and
		\[
		\Phi(E_7)=\Phi(E_8)\cap V_7.
		\]
		Again, since \(E_7\) is simply laced, its Weyl vector is characterized by
		\[
		\langle \rho_{E_7},\alpha_i\rangle=1,
		\qquad i=1,\ldots,7.
		\]
		Using the above Bourbaki simple roots, one obtains
		\[
		\rho_{E_7}
		=
		\left(0,1,2,3,4,5,-\frac{17}{2},\frac{17}{2}\right).
		\]
		
		Consider the subspace
		\[
		U_7
		:=
		V_7\cap\{x_1=0\}
		=
		\{x\in\mathbb R^8\mid x_1=0,\ x_7+x_8=0\}.
		\]
		This is a proper subspace of \(V_7\), of dimension \(6\). We claim that it is
		a root subspace. Indeed, the following six roots belong to \(\Phi(E_7)\):
		\[
		\begin{aligned}
			\beta_1&=e_2-e_3,\\
			\beta_2&=e_3-e_4,\\
			\beta_3&=e_4-e_5,\\
			\beta_4&=e_5-e_6,\\
			\beta_5&=e_5+e_6,\\
			\beta_6&=e_7-e_8.
		\end{aligned}
		\]
		They all lie in \(U_7\). Moreover, they are linearly independent. Indeed, a
		linear relation
		\[
		a_1\beta_1+\cdots+a_6\beta_6=0
		\]
		has \(e_7,e_8\)-coordinates \(a_6\) and \(-a_6\), so \(a_6=0\). The remaining
		relation is supported on \(e_2,\ldots,e_6\):
		\[
		\begin{aligned}
			0
			&=
			a_1(e_2-e_3)
			+a_2(e_3-e_4)
			+a_3(e_4-e_5)  \\
			&\qquad
			+a_4(e_5-e_6)
			+a_5(e_5+e_6).
		\end{aligned}
		\]
		Looking successively at the \(e_2,e_3,e_4,e_5,e_6\)-coordinates gives
		\[
		a_1=0,\qquad
		-a_1+a_2=0,\qquad
		-a_2+a_3=0,\qquad
		-a_3+a_4+a_5=0,\qquad
		-a_4+a_5=0.
		\]
		Thus \(a_1=a_2=a_3=a_4=a_5=0\). Hence the six roots
		\(\beta_1,\ldots,\beta_6\) are linearly independent. Since
		\(\dim U_7=6\), they span \(U_7\). Therefore \(U_7\) is a proper root subspace
		of \(V_7\).
		
		Finally, the first coordinate of \(\rho_{E_7}\) is zero and $-\frac{17}{2}+\frac{17}{2}=0$ so $\rho_{E_7}\in U_7$. Thus \(\rho_{E_7}\) lies in a proper root subspace. By
		Proposition~\ref{prop:geom_criterion}, the affine Coxeter complex of type
		\(\widetilde E_7\) admits root-flats perpendicular to bisectors.
		
	\end{proof}

	\subsection{Configurations of non--affine-generic triples of ideal chambers}
	\label{subsec::conf_non_affine_generic}
	
	Assume that the locally finite affine building \(X\) is of an affine Coxeter 
	type that admits root-flats perpendicular to bisectors. Let \(F\) be a root-flat in \(X\) that is perpendicular to some bisector ($F$ not necessarily of maximal dimension with this property).  Choose two opposite ideal simplices \(\eta_{+}, \eta_{-} \in X^{\infty}\) 
	contained in \(F^{\infty}\) and of maximal dimension in \(F^{\infty}\).  
	As recalled in Section~\ref{sec::sub_buildings_affine_flats}, the space
	\[
	X(\eta_{+}, \eta_{-}) \;\cong\; \mathbb{R}^{\,n - |I|} \times X_I,
	\]
	where \(n = \dim X\), \(|I| = \dim X_I\), and the flat \(F\) is isometric to 
	\(\mathbb{R}^{\,n - |I|}\).
	
	\medskip
	
	We write \((W_I, I)\) for the finite Coxeter group associated with the 
	spherical building \(X_I^\infty\); note that this group embeds naturally in the 
	finite Weyl group \((W_{\mathrm{fin}}, S)\) of \(X\).  
	We also denote by \((W^{\mathrm{aff}}_I, I_{\mathrm{aff}})\) the affine Coxeter 
	group of the affine building \(X_I\).
	
	\medskip
	
	Although \(F\) is perpendicular to the bisectors of \(X^{\infty}\) (even when \(F\) is of maximal dimension with this property), this does not imply that \(X_{I}\) contains no root-flat perpendicular to the bisectors of \(X_{I}^{\infty}\).  Indeed, a bisector of \(X^\infty\) need not project under
	\[
	p_I : X(\eta_{+}, \eta_{-}) \;\cong\; \mathbb{R}^{\,n - |I|} \times X_I 
	\longrightarrow X_I
	\]
	to a bisector of \(X_I^\infty\).
	
	\medskip
	
	However, the space \(X_I \cup X_I^\infty\) is known to contain triples of ideal 
	chambers in ideal-generic position (see \cite[Proposition~5.6]{CiBars} and note that the irreducibility of the Coxeter group is not required).
	By Remark~\ref{rem::ideal_affine} these triples are also affine-generic.  
	Choose
	\[
	(\mathcal{C}_1, \mathcal{C}_2, \mathcal{C}_3) 
	\in \Ch(X_I^\infty)_{\mathrm{id\text{-}gen}}^{[3]},
	\]
	and let \(\mathcal{A}_{ij}\) be the unique affine apartment of \(X_I\) whose 
	ideal boundary contains \(\mathcal{C}_i\) and \(\mathcal{C}_j\).  
	Then for every \(i \neq j \in \{1,2,3\}\) we obtain corresponding affine 
	apartments
	\[
	A_{ij} \;\cong\; \mathbb{R}^{\,n - |I|} \times \mathcal{A}_{ij}
	\]
	of \(X(\eta_{+}, \eta_{-})\), and hence of \(X\).
	
	\medskip
	
	We now show that the apartments \(A_{12}, A_{23}, A_{13}\) contain a triple of 
	antipodal ideal chambers
	\[
	C_1,\, C_2,\, C_3 \;\in\; X^\infty
	\]
	which is \emph{not} in affine-generic position. In particular, we will obtain that 
	$$A_{12}^\infty \cap A_{23}^\infty \cap A_{13}^\infty = F^\infty.$$
	\subsubsection*{Step 1: Constructing the first two chambers}
	In the apartment \(A_{12}\), consider the bisector \(\gamma_{12}\) orthogonal 
	to a flat \(F_{12} \cong \mathbb{R}^{\,n - |I|}\) of $A_{12}$ that is parallel to $F$.  
	Let \(\mathcal H(A_{12},F_{12})\) be the finite family of affine walls in 
	\(A_{12}\) containing \(F_{12}\).  
	These walls partition \(A_{12}\) into finitely many convex cones meeting in 
	\(F_{12}\).  
	Among these, exactly two, denoted \(Q_1\) and \(Q_2\), project their ideal boundary under 
	\(p_I : A_{12} \to \mathcal A_{12}\) to the ideal chambers 
	\(\mathcal C_1\) and \(\mathcal C_2\), respectively. Notice that \(Q_1\) and \(Q_2\) are opposite as convex cones in $A_{12}$.
	
	By the symmetry of a Coxeter complex, we can assume, without loss of generality, that \(\gamma_{12} \subset Q_1 \cup Q_2\).  
	Let \(C_1\) and \(C_2\) be the ideal chambers whose barycenters are the two 
	endpoints of \(\gamma_{12}\).  
	By construction,
	\[
	C_1 \subset Q_1^\infty,
	\qquad
	C_2 \subset Q_2^\infty.
	\]
	
	\subsubsection*{Step 2: Constructing the third chamber}
	By the construction of $A_{ij}$, one notices that the intersection
	\[
	Q_1' := A_{12} \cap A_{13}
	\]
	is a convex cone parallel to \(Q_1\).  
	Thus \(\gamma_{12}\) intersects \(Q_1'\) in a geodesic ray whose ideal endpoint 
	is the barycenter of \(C_1\), and hence \(C_1 \subset (Q_1')^\infty\).
	
	Extend this ray inside \(A_{13}\) to a full bisector \(\gamma_{13}\).  
	Let \(C_3\) be the ideal chamber in \(A_{13}^\infty\) whose barycenter is the 
	endpoint of \(\gamma_{13}\) opposite to \(C_1\).  
	Then, by the construction, the pairs \((C_1,C_2)\) and \((C_1,C_3)\) are opposite.
	
	\subsubsection*{Step 3: Oppositeness of \texorpdfstring{$C_{2}$}{C2} and \texorpdfstring{$C_{3}$}{C3}}
	
	Both bisectors $\gamma_{12}$ and $\gamma_{13}$ are perpendicular to every root-flat parallel to $F_{12}$.  
	Since, by construction, the apartment $A_{23}$ contains root-flats parallel to $F$, and hence also to $F_{12}$, one might expect that extending the bisectors $\gamma_{12}$ and $\gamma_{13}$ geodesically into the apartment $A_{23}$ would force their ideal endpoints in $A_{23}^{\infty}$ to be opposite. However, this is not true in general. A counterexample already appears in the three-dimensional case when the root-flat $F$ is one-dimensional. Indeed, in this situation the orthogonal complement $F^\perp$ is two-dimensional. The geodesic extensions of the bisectors $\gamma_{12}$ and $\gamma_{13}$ into $A_{23}$ both lie in $F^\perp$, but there is no reason for them to be collinear. Consequently, the angle between these two geodesics in $F^\perp$ may be different from $180^\circ$, and therefore their ideal endpoints in $A_{23}^{\infty}$ need not be opposite.
	
	Nevertheless, if the bisector $\gamma_{12}$ in $A_{12}$ projects via $p_{I}$ to a bisector in $\mathcal{A}_{12}$, then $C_{2}$ and $C_{3}$ form an antipodal pair.
	
	Combining these observations, together with the assumption that  the bisector $\gamma_{12}$ in $A_{12}$ does project via $p_{I}$ to a bisector in $\mathcal{A}_{12}$, we conclude that the three ideal chambers
	\[
	C_{1},\; C_{2},\; C_{3}
	\]
	form a triple of antipodal chambers in $X^{\infty}$ that lies in non-affine-generic position, as required.  
	\begin{rem}
		This situation occurs, for instance, in $\widetilde{A}_{2}$--buildings, where the bisector in $A_{12}$ indeed projects to a bisector in $\mathcal{A}_{12}$. 
	\end{rem}

	\subsection{Configurations of non--ideal-generic triples of ideal chambers in affine-generic position}
	\label{subsec::conf_non_ideal_generic} We establish this under the assumption that certain unipotent subgroups contain arbitrarily many elements.

	Assume that, by the construction in Section~\ref{subsec::conf_non_affine_generic}, we have obtained an antipodal triple of ideal chambers \(C_1,\, C_2,\, C_3 \in X^{\infty}\) which is \emph{not} in affine-generic position.  As usual, for each pair \(i \neq j\) we denote by \(A_{ij}\) the unique apartment of \(X\) whose ideal boundary contains the opposite chambers \(C_i\) and \(C_j\).
	
	 In particular, by the construction in Section~\ref{subsec::conf_non_affine_generic},  there is also a root-flat $F$ of $A_{12}$ that is parallel to $A_{13}$ and $A_{23}$, and of maximal dimension among those, such that $A_{12}^\infty \cap A_{23}^\infty \cap A_{13}^\infty = F^{\infty}$. 
	
	Starting from the configuration formed by the apartments 
	\(A_{12}, A_{13}, A_{23}\), our aim is to construct new apartments \(
	A'_{12},\, A'_{13},\, A'_{23} \subset X\) together with an antipodal triple of ideal chambers \(C'_1,\, C'_2,\, C'_3 \in X^{\infty}\), such that this new triple is \emph{not} in ideal-generic position but it is in affine-generic position.
	
	Moreover, the flat \(F\) need not be of maximal dimension among those root-flats that are perpendicular to bisectors in $A_{12}$. Choose two opposite ideal simplices \(\eta_{+}, \eta_{-} \in X^{\infty}\) 
	contained in \(F^{\infty}\) and of maximal dimension in \(F^{\infty}\). Take \(X(\eta_{+}, \eta_{-}) \;\cong\; \mathbb{R}^{\,n - |I|} \times X_I\), where \(n = \dim X\), \(|I| = \dim X_I\), and the flat \(F\) is isometric to \(\mathbb{R}^{\,n - |I|}\).

	Assume that the parabolic subgroup \(P_{\eta_{+}} := \{\, g \in \Aut(X) \mid g(\eta_{+}) = \eta_{+} \,\}\)
	associated with \(\eta_{+}\) has a non--trivial unipotent part, and that this 
	unipotent part contains arbitrarily many elements. Suppose moreover that we can choose a unipotent element
	\(u \in P_{\eta_{+}}\) such that:
	\begin{enumerate}
		\item \(u\) fixes pointwise a half--apartment of \(A_{12}\) whose ideal 
		boundary contains \(C_{1}\)
		\item the wall $\mathcal{H}$ bounding this half--apartment is a facet of \(C_{1}\)
		\item $\eta_+$ is in the ideal boundary of this half-apartment, but not on the boundary of the wall $\mathcal{H}$.
	\end{enumerate}
	
	The element \(u\) acts nontrivially on \(X^{\infty}\), and in particular it 
	moves both \(\eta_{-}\) and the ideal chamber \(C_{2}\). Since 
	\(u(C_{1}) = C_{1}\) and opposition is preserved under automorphisms of \(X\), 
	we consider the apartments
	\[
	A'_{12} := A_{12}, \qquad A'_{13} := u(A_{13}),
	\]
	and observe that the pairs of ideal chambers \((C_{1}, C_{2})\) and 
	\((C_{1}, u(C_{3}))\) remain opposite. In addition, consider the intersection \(Q_{1} := A_{12} \cap u(A_{13})\).  
	Since \(F \cap \mathcal{H}\) may be a root-flat of smaller dimension than \(F\), 
	we may not have \(\partial Q_{1}^{\mathrm{fin}} \neq \emptyset\).  In this case, we will obtain an antipodal triple $\{C'_1,\, C'_2,\, C'_3\}$ such that $(A'_{12})^\infty \cap (A'_{23})^\infty \cap (A'_{13})^\infty$ is larger than the ideal boundary of the root-flat provided by Proposition~\ref{prop::non_generic_flats}.
	Nevertheless, by applying the same procedure as below and reducing the dimension of \(F \cap \mathcal{H}\) inductively, we may assume that \(\partial Q_{1}^{\mathrm{fin}} \neq \emptyset\).
	
	We claim that the pair \((C_{2},\, u(C_{3}))\) of ideal chambers is still opposite. 
	Indeed, we know by hypothesis that \((C_{2}, C_{3})\) are opposite, and that 
	\(C_{2}\) and \(u(C_{2})\) share a facet coming from the wall \(\mathcal{H}\). 
	Moreover, the gate (see \cite[Definition 3.104 and Proposition 3.105]{abramenko_brown08}) of the ideal chamber \(C_{3}\) on the wall 
	\(\mathcal{H} \cap A_{23}\), taken at \(C_{2}\), is an ideal chamber contained in the 
	ideal boundary of 
	\[
	Q_{2} := A_{23} \cap A_{12},
	\]
	and therefore remains fixed under the action of \(u\).  Recall that $Q_{2}^\infty$ contains the chamber $C_2$, and thus by \cite[Proposition 4.115]{abramenko_brown08}, $Q_{2}^\infty$ is a convex chamber subcomplex, thus is gallery connected. Moreover, by our assumptions, $\eta_+ \in Q_{2}^\infty$, but $\eta_+$ is not on the ideal boundary of the wall $\mathcal{H}$.  Therefore, there is at least one wall \(\mathcal{H} \subset A_{12}\) associated with \(C_{1}\), and hence also with \(C_{2}\), that satisfies this property.  Otherwise, one would obtain a contradiction to the assumption that \((C_{1}, C_{2}, C_{3})\) is in non--affine-generic position.

	Since \((C_{2}, C_{3})\) are opposite, and hence \((u(C_{2}), u(C_{3}))\) are also opposite in $u(A_{23})$, 
	the above observation implies that the gate of \(u(C_{3})\) with respect to 
	\(\mathcal{H}\) at \(C_{2}\) coincides with the gate of \(C_{3}\), and in particular 
	is not \(C_{2}\). By known results on the gate (see \cite[Propositions 3.103 and 3.105]{abramenko_brown08}), it follows that the pair 
	\((C_{2},\, u(C_{3}))\) is indeed opposite, as wanted. We take $A'_{23}$ to be the unique apartment in $X$ having $C_2$ and $u(C_3)$ in its ideal boundary.
	
	Since, by assumption, \(\partial Q_{1}^{\mathrm{fin}} \neq \emptyset\), the contrapositive of Proposition~\ref{prop::non_generic_flats} ensures that 
	$$C_{1}, C_{2}, u(C_{3})$$
	is an affine-generic triple of ideal chambers in \(X^\infty\). 
	
	It remains to show that $C_{1}, C_{2}, u(C_{3})$ are also in a non-ideal-generic position; more precisely, that 
	$$\eta_+ \subset (A'_{12})^\infty \cap (A'_{13})^\infty \cap (A'_{23})^\infty.$$
 Indeed, by construction, $\eta_+$ is already contained in $(A'_{12})^\infty \cap (A'_{13})^\infty$. Moreover, we have
that $\eta_+ =u(\eta_+)  \subset u(A_{23})^{\infty}$. Now, since both $C_2$ and $u(C_2)$ are opposite $u(C_3)$ and share the same gate $D$ with respect to the wall $\mathcal{H}$ at $C_2$, the intersection $u(A_{23})^{\infty} \cap A'_{23}$  is a half-apartment containing $u(C_3)$, the gate $D$, and consequently $\eta_+$ as well.

	\subsection{Summary}
	
	\begin{thm}\label{thm:summary}
		Among the irreducible finite Weyl groups of rank $\ge 2$,  the geometric condition of Proposition~\ref{prop:geom_criterion} holds exactly for the following three cases
		\[
		G_2,\qquad B_2=C_2,\qquad B_3.
		\]
		In other words, for affine buildings of type $\widetilde C_2, \widetilde G_2$ or $\widetilde B_3$, any antipodal triple of ideal chambers is ideal-generic, and hence affine-generic. 
	\end{thm}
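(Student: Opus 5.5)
The theorem is a bookkeeping statement: it combines the classification of irreducible finite root systems of rank at least two with the case-by-case propositions of this section, and then translates the result back to buildings through Proposition~\ref{prop:geom_criterion} and Proposition~\ref{prop antipodal non generic case}.

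First, recall that the irreducible finite Weyl groups of rank $\ge 2$ are exactly those of types $A_n$ ($n\ge2$), $B_n$ ($n\ge2$), $C_n$ ($n\ge3$, since $C_2=B_2$), $D_n$ ($n\ge4$, the cases $D_3\cong A_3$ and $D_2$ being redundant or reducible), $E_6,E_7,E_8$, $F_4$ and $G_2$. I would go through this list and assign each type to one side. The positive side consists of $B_2=C_2$, $G_2$ and $B_3$. For these, the proposition in Subsection~\ref{subsec::not_perp_bisectors} shows directly that $\rho$ lies in no proper root subspace, so condition (2) of Proposition~\ref{prop:geom_criterion} holds. For $B_3$, that verification goes through Lemma~\ref{lem:reduction}: it suffices to test the $W_{\mathrm{fin}}$-orbits of the three maximal standard parabolic spans.

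The negative side covers every other type. $A_n$ for $n\ge2$ is handled by Proposition~\ref{prop:An}, $B_n$ for $n\ge4$ by Proposition~\ref{prop:Bn}, $C_n$ for $n\ge3$ by Proposition~\ref{prop:Cn}, $D_n$ for $n\ge4$ by Proposition~\ref{prop:Dn}, $F_4$ by Proposition~\ref{prop:F4}, and $E_6,E_7,E_8$ by Proposition~\ref{prop:E678}. In each of these, $\rho$ (or a Weyl conjugate of it) lies in a proper root subspace, so condition (2) fails. Putting the two sides together shows that condition (2), and hence condition (1), holds exactly for the three listed types. The main point to check is that the list is exhaustive, i.e.\ that $B_n$ for $n\ge4$ and $C_n$ for $n\ge3$ together leave exactly $B_2$ and $B_3$ uncovered among the $B/C$ series. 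Every other rank-$\ge2$ type falls into one of the negative propositions.

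For the building-theoretic reformulation, let $X$ be of type $\widetilde C_2$, $\widetilde G_2$ or $\widetilde B_3$. Its finite Weyl group is $C_2=B_2$, $G_2$ or $B_3$ respectively. By Proposition~\ref{prop:geom_criterion}, the affine Coxeter complex has no root-flat perpendicular to a bisector. The affine building results in Proposition~\ref{prop antipodal non generic case} are stated for a thick affine building. Once that hypothesis is confirmed in the present setting, applying the proposition gives that every antipodal triple is in ideal-generic position. By Remark~\ref{rem::ideal_affine}, which relies on Proposition~\ref{prop::non_generic_flats}, ideal-genericity implies affine-genericity, and this finishes the proof.

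I do not expect any real obstacle, since all the computational content already sits in the preceding propositions. The only delicate point is the \emph{exactly} in the statement. It needs the correct identification of isomorphic small-rank types, namely $C_2=B_2$, $D_3=A_3$ and $B_2$ versus $C_2$, so that no type is counted twice or omitted.
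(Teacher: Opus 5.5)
Your proposal is correct and is the same argument the paper relies on. Theorem~\ref{thm:summary} is obtained by running through the classification, using the positive proposition of Subsection~\ref{subsec::not_perp_bisectors} for $B_2=C_2$, $G_2$, $B_3$ and Propositions~\ref{prop:An}--\ref{prop:E678} for every other type, and then passing to buildings through Propositions~\ref{prop:geom_criterion} and~\ref{prop antipodal non generic case} together with Remark~\ref{rem::ideal_affine}.

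The identification that actually matters for the word \emph{exactly} is $B_3$ versus $C_3$, not $B_2$ versus $C_2$. These two share the same Weyl group and Coxeter complex but have different Weyl vectors: $\rho_{B_3}=(\tfrac52,\tfrac32,\tfrac12)$ avoids every proper root subspace, while $\rho_{C_3}=(3,2,1)$ lies in $\{x_1-x_2-x_3=0\}$. So the classification has to be read at the level of root systems, as you and the paper implicitly do.
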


	\begin{rem}
		Let $\Phi=\Phi_1\times\cdots\times\Phi_m$ be a direct product of irreducible finite root systems with corresponding decomposition $V=V_1\oplus\cdots\oplus V_m$. Let $\rho=(\rho_1,\dots,\rho_m)$ be the associated Weyl vector.
		
		A wall or maximal parabolic root subspace in the product is induced from a single factor. Equivalently, the corresponding line to a maximal parabolic in the product has the form $L=L_i\subset V_i$, for some $i$, and where $L_i$ is a line corresponding to a maximal parabolic of $\Phi_i$.
		
		Since every vector in $V_j$ with $j\neq i$ is automatically perpendicular to $L_i$, the perpendicularity condition from Proposition~\ref{prop:geom_criterion} reduces to the corresponding condition in the factor $V_i$. More precisely,
		\[
		\rho\in L^\perp
		\quad\Longleftrightarrow\quad
		\rho_i\in L_i^\perp.
		\]
		
		Hence the perpendicularity condition for the product root system is equivalent to the perpendicularity condition in each irreducible factor.
	\end{rem}

	\section{Algebraic case: stabilizers of triples}
	\label{sec::algebraic_case}
	
	Let now $\bfG$ be a semisimple algebraic group over a non-archimedean local field $k$. We assume that $\bfG$ is $k$-isotropic of $k$-rank $n$. We write $G:=\bfG(k)$, and denote by $X:=\Delta^{\mathrm{BT}}(\bfG,k)$ its Bruhat--Tits building and by $X^\infty$ its Tits (spherical) building at infinity. The action of $G$ on $X$ is by continuous isometries and the stabilizer of any non-empty bounded subset of $X$ is relatively compact.
	
	The goal of this section is to prove that for any antipodal triple $(C_1,C_2,C_3) $ in $\Ch(X^\infty)^{[3]}$, the $k$-rank of the stabilizer 
	$$\Stab_{G}\{C_1,C_2,C_3\} := \{g \in G \; \vert \; g(\{C_1,C_2,C_3\})= \{C_1,C_2,C_3\} \text{ setwise}\}$$
	 is equal to the dimension of the canonical root-flat $F$ attached to the triple $(C_1,C_2,C_3)$ by Proposition~\ref{prop::non_generic_flats}. 
	
	\begin{rem} Let $\mathbf{P}_1, \mathbf{P}_2, \mathbf{P}_3$ be the minimal parabolic subgroups in $G$ that stabilize the chambers $C_1, C_2, C_3$, respectively. Note that the intersection $\mathbf{P}_1 \cap \mathbf{P}_2 \cap \mathbf{P}_3$ is a subgroup of $\operatorname{Stab}_{G}\{C_1, C_2, C_3\}$. By the $k$-rank of the stabilizer $\operatorname{Stab}_{G}\{C_1, C_2, C_3\}$, we mean the $k$-rank of $\mathbf{P}_1 \cap \mathbf{P}_2 \cap \mathbf{P}_3$.

	\end{rem}

	\subsection{The canonical root-flat}
	
	We keep the notation of Section~\ref{section::opposite_ideal_ch}: for a triple $(C_1,C_2,C_3)\in\Ch(X^\infty)^{[3]}$ of pairwise opposite ideal chambers, $A_{ij}$ denotes the unique affine apartment of $X$ with $C_i,C_j\in \Ch(A_{ij}^\infty)$, and $Q_i:=A_{ij}\cap A_{ik}$ for $\{i,j,k\}=\{1,2,3\}$.
	The following proposition is a reformulation of Proposition~\ref{prop::non_generic_flats}. 
		
	\begin{prop}\label{prop:root-flat_triple}
    Let $(C_1,C_2,C_3)\in\operatorname{Ch}(X^\infty)^{[3]}$ be an antipodal triple, which we assume to be non-affine-generic. Then there exists a root-flat $F$ in $X$, well-defined up to parallelism and of maximal possible dimension $m >0$ in $A_{ij}$, for $i\neq j \in \{1,2,3\}$, such that
    \[
    F^\infty \subseteq A_{12}^\infty \cap A_{13}^\infty \cap A_{23}^\infty
    \]
    and each $A_{ij}$ contains a representative of the parallelism class of $F$. 
\end{prop}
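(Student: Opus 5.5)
The plan is to deduce the statement directly from Proposition~\ref{prop::non_generic_flats} and its proof, with one additional argument for uniqueness up to parallelism.

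First, since the triple is not affine-generic, Definition~\ref{def::generic_position_1} gives some $i$ with $\partial Q_i^{\mathrm{fin}}=\emptyset$. Proposition~\ref{prop::non_generic_flats} then yields $\partial Q_j^{\mathrm{fin}}=\emptyset$ for all $j$. Its proof produces a root-flat $F_1\subset\partial Q_1^{\mathrm{inf}}$, with dimension $m$ maximal among root-flats in all the $\partial Q_k^{\mathrm{inf}}$. Because it is a root-flat, $m>0$ by Definition~\ref{def::affine_flat}.

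Second, I read off the apartments from that proof. There $A_{12},A_{13}\subset X(F_1)\cong\mathbb{R}^m\times X_{F_1}$ and $A_{23}=\mathbb{R}^m\times\mathcal{A}_{23}$. So each $A_{ij}$ is a product $\mathbb{R}^m\times\mathcal{A}_{ij}$, and hence contains a flat $\mathbb{R}^m\times\{x\}$ parallel to $F_1$. These flats are root-flats by Remark~\ref{rem::apartments_panel_building}: pick a vertex $x$ of $\mathcal{A}_{ij}$ and intersect the walls through it. Parallel flats have the same ideal boundary, so $F^\infty:=F_1^\infty$ lies in every $A_{ij}^\infty$. This gives the inclusion $F^\infty\subseteq A_{12}^\infty\cap A_{13}^\infty\cap A_{23}^\infty$.

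Third, I would show maximality and well-definedness up to parallelism. Suppose $F'$ is another root-flat of maximal dimension $m$ whose parallelism class has a representative in each $A_{ij}$ lying in $\partial Q_k^{\mathrm{inf}}$. The claim in the proof of Proposition~\ref{prop::non_generic_flats} shows that two such flats of maximal dimension inside $\partial Q_1^{\mathrm{inf}}$ are parallel: they are disjoint, and both lie in the half-apartments $D_H$ cut out by the walls defining $F_1$. Applying this to a representative of $F'$ in $\partial Q_1^{\mathrm{inf}}$ gives $F'\parallel F_1$.

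The main obstacle is that a competing flat $F'$ with $F'^\infty$ in the triple intersection need not a priori lie in $\partial Q_1^{\mathrm{inf}}$. To handle this I would use the retraction argument from the proof of Proposition~\ref{prop antipodal non generic case}. Given the flat of maximal dimension with ideal boundary in the intersection, that argument produces sets $V_i\subset\partial Q_i$ with common ideal boundary, so a representative of $F'$ can be taken in $\partial Q_1$. It lies in $\partial Q_1^{\mathrm{inf}}$ because $\partial Q_1^{\mathrm{fin}}=\emptyset$.
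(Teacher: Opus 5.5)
Your first three steps match the paper. It gives no separate proof and presents this proposition as a reformulation of Proposition~\ref{prop::non_generic_flats}: $m$ is the maximal dimension of root-flats in the sets $\partial Q_k^{\mathrm{inf}}$, and uniqueness up to parallelism comes from the claim, inside that proof, that maximal root-flats in $\partial Q_1^{\mathrm{inf}}$ are parallel.

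\textbf{The gap is your fourth step.} You correctly flag it as the crux for a competitor $F'$ known only to satisfy $F'^\infty\subseteq A_{12}^\infty\cap A_{13}^\infty\cap A_{23}^\infty$. Your maximality claim also rests on it, since your third step only compares flats of the same dimension $m$. The proof of Proposition~\ref{prop antipodal non generic case} does not supply what you need:
\begin{enumerate}
\item There the sets $V_i\subset\partial Q_i$ are not produced by the retractions $\rho_{A_{23},C_2}$ and $\rho_{A_{12},C_1}$. Their existence is simply asserted from the non-emptiness of the triple intersection, and the retractions only serve to transport an angle.
\item The $V_i$ are merely unbounded convex pieces of root-flats with some common ideal boundary. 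Even granting them, you get no flat in $\partial Q_1$ whose ideal boundary is all of $F'^\infty$.
\end{enumerate}
So ``a representative of $F'$ can be taken in $\partial Q_1$'' is unsupported as written.

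\textbf{What closes the gap is an elementary polyhedral argument.}
\begin{enumerate}
\item Because $A_{12}^\infty\cap A_{13}^\infty$ contains $C_1$, it is a chamber subcomplex. For each simplex $\sigma$ in it, $\mathrm{proj}_\sigma C_1$ lies in the convex hull of $\sigma$ and $C_1$, hence in both apartments.
\item Two apartments sharing an ideal chamber share a sector pointing to it. Hence $Q_1^\infty=A_{12}^\infty\cap A_{13}^\infty$.
\item So if $F'^\infty$ lies in the triple intersection, the linear direction $L'$ of $F'$ lies in the recession cone of the closed convex polyhedron $Q_1\subset A_{12}$. It is therefore contained in the lineality space $L$ of $Q_1$.
\item Every affine subspace contained in $Q_1$ has direction inside $L$. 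The minimal faces of $Q_1$ are translates of $L$, and since $\partial Q_1^{\mathrm{fin}}=\emptyset$ they are unbounded root-flats, as in the proof of Lemma~\ref{lem::affine_flat}.
\item Therefore a root-flat of maximal dimension in $\partial Q_1^{\mathrm{inf}}$, such as $F_1$, has direction exactly $L$. Equivalently, $Q_1=\mathbb{R}^m\times\mathcal Q_1$ with $\mathcal Q_1$ pointed.
\end{enumerate}
Consequently $\dim F'\le m$, with equality if and only if $L'=L$, i.e.\ $F'^\infty=F_1^\infty$ and $F'$ is parallel to $F_1$. In that case every minimal face of $Q_1$ is a representative of the class of $F'$ inside $\partial Q_1^{\mathrm{inf}}$. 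This gives both the maximality and the well-definedness up to parallelism, and should replace the appeal to the retraction argument.
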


	Notice that in the affine-generic case we have $m=0$.
	
\medskip	
	For the rest of the section we let $(C_1,C_2,C_3)\in\Ch(X^\infty)^{[3]}$ be an antipodal, non-affine-generic triple, with associated root-flat $F$ and $m=\dim F > 0$. Fix two opposite ideal simplices $\eta_+,\eta_-\in F^\infty$ of maximal dimension; thus $\dim\eta_+=\dim\eta_-=m-1$. With this choice Proposition~\ref{prop::res_building} yields the splitting
	\[
	X(F) \;=\; X(\eta_+,\eta_-) \;\cong\; \bbR^{n-|I|}\times X_I,
	\]
	where $n-|I|=m$, the Euclidean factor identifies with the parallelism class of $F$, and $X_I$ is the transverse locally finite affine building of dimension $n-m$. Write $p_I:X(F)\to X_I$ for the canonical projection and denote the projected triple $(p_I(C_1), p_I(C_2),p_I(C_3))$ by $(\mathcal C_1,\mathcal C_2,\mathcal C_3)\in\Ch(X_I^\infty)^{[3]}$.
	
	\subsection{The convex hull for non-affine-generic triples}
	We maintain the notation and hypotheses established in the previous subsection, but now $m=\dim F \ge 0$.
		
	Definition~\ref{def::generic_position_convex_hull} introduced the bounded convex hull $\Gamma(C_1,C_2,C_3)$ only for affine-generic triples (where each $\partial Q_i^{\mathrm{fin}}\ne\emptyset$). In this section we extend it to non-affine generic triples. 
	
	\begin{lem}\label{lem:hull}
		The projected triple $(\mathcal C_1,\mathcal C_2,\mathcal C_3)$ in $X_I$ is affine-generic in $X_I$. 
	\end{lem}

	\begin{proof}
		By Lemma~\ref{lem::opposition_projection}\,(2)--(3) the chambers $\mathcal C_i$ form an antipodal triple in $X_I^\infty$. Going through the proof of Proposition~\ref{prop::non_generic_flats} with $F_1=F$ playing the role of the maximal-dimension root-flat in $\partial Q_1^{\mathrm{inf}}$, we obtain that $\partial\mathcal{Q}_i^{\mathrm{fin}}\ne\emptyset$ for every $i\in\{1,2,3\}$, so $(\mathcal C_1,\mathcal C_2,\mathcal C_3)$ is affine-generic in $X_I$. 
	\end{proof}
	
	\begin{Def}\label{def:hull_extended}
		The convex hull $\Gamma_{X_I}(\mathcal C_1,\mathcal C_2,\mathcal C_3)\subset X_I$ in the affine building $X_I$ is well-defined by Definition~\ref{def::generic_position_convex_hull} and Lemma~\ref{lem:hull}. We set
		\[
		\Gamma(C_1,C_2,C_3) \;:=\; F\times\Gamma_{X_I}(\mathcal C_1,\mathcal C_2,\mathcal C_3) \;\subset\; X(F)\subset X,\]
		using the splitting $X(F)\cong F\times X_I$.
	\end{Def}
	
	Observe that for $m=0$ Definition \ref{def:hull_extended} reduces to Definition~\ref{def::generic_position_convex_hull}.
	
Notice that in Definition~\ref{def:hull_extended}, \(F\) is needed only up to its parallelism class in \(X\).
	
	\subsection{Levi structure}
		
	Let $\mathbf{P}_{\eta_+}, \mathbf{P}_{\eta_-} \subset \mathbf{G}$ denote the parabolic $k$-subgroups stabilizing $\eta_+$ and $\eta_-$, respectively. They are opposite parabolic subgroups whose intersection is a Levi $k$-subgroup of either. We denote this Levi subgroup by $\mathbf{L}_{\eta_+}$ and its group of $k$-points by $L_{\eta_+} = \mathbf{L}_{\eta_+}(k)$. Let $\mathbf{S}$ denote the maximal $k$-split torus of the center $Z(\mathbf{L}_{\eta_+})$. It is well known that
\[
\mathbf{L}_{\eta_+} = Z_{\mathbf{G}}(\mathbf{S}) = \mathbf{P}_{\eta_+} \cap \mathbf{P}_{\eta_-}.
\]
Moreover, $\mathbf{S}$ has $k$-rank $m = \dim F$, and the action of $\mathbf{S}(k)$ on $X$ preserves the apartment $A_{ij}$, for $i\neq j \in \{1,2,3\}$, acting on $F$ by translations along directions in $F$.

We write $\mathbf{L}_{\eta_+} = \mathbf{S} \cdot \mathbf{M}$, where $\mathbf{M} \subset \mathbf{L}_{\eta_+}$ is the derived (semisimple) subgroup of $\mathbf{L}_{\eta_+}$. This is an almost direct product, $\mathbf{L}_{\eta_+} \cong (\mathbf{S} \times \mathbf{M}) / Z_0$, with $Z_0 := \mathbf{S} \cap \mathbf{M}$ being a finite central subgroup. The group of $k$-points of the semisimple $k$-group $\mathbf{M}$ acts faithfully on the transverse affine building $X_I$, while $\mathbf{S}(k)$ acts trivially on $X_I$.
		
	\begin{ex}\label{ex:SL3}
    Take $\mathbf{G}=\operatorname{SL}_3$ over $\mathbb{Q}_p$ (whose Bruhat--Tits building is of affine type $\widetilde{A}_2$). Three pairwise opposite full flags $(C_1,C_2,C_3)$ in the spherical building can fail to be affine-generic. Indeed, up to the $\operatorname{SL}_3(\mathbb{Q}_p)$-action, one may have that $F$ is the geodesic line (which in this case is a wall) in the standard apartment whose ideal endpoints are the barycenters of two opposite \emph{panels} (codimension-1 simplices) $\eta_+,\eta_-\in X^\infty$ (which in this case are ideal vertices). The ideal simplex $\eta_+$ corresponds to a single proper subspace $W\subset\mathbb{Q}_p^3$ (a line or a plane). Take, for instance, $W=\mathbb{Q}_p e_2$, so that
    \[
    P_{\eta_+} = \operatorname{Stab}_{\operatorname{SL}_3}(\mathbb{Q}_p e_2) = \begin{pmatrix} * & 0 & * \\ 0 & * & 0 \\ * & 0 & * \end{pmatrix} \cdot U_{\eta_+},
    \]
    which is a maximal parabolic subgroup with Levi factor $\mathbf{L}_{\eta_+} \cong \operatorname{GL}_2$ (acting on the $(e_1,e_3)$-plane), embedded in $\operatorname{SL}_3$ via $A \mapsto \operatorname{diag}(A_{11}, (\det A)^{-1}, A_{22})$ in suitable coordinates. The central torus $\mathbf{S}=Z(\mathbf{L}_{\eta_+})^\circ \cong \mathbb{G}_m$ has rank $1=m$, and its group of $\mathbb{Q}_p$-points $\mathbb{Q}_p^\times$ acts on $X$ by translations along $F$. The derived subgroup $\mathbf{M}=[\mathbf{L}_{\eta_+},\mathbf{L}_{\eta_+}] \cong \operatorname{SL}_2$ acts on the transverse building $X_I$, which is the Bruhat--Tits tree of $\operatorname{SL}_2(\mathbb{Q}_p)$.
\end{ex}

	The next lemma is straightforward, but we include a proof for completeness. 
	\begin{lem}\label{lem:reductions}
		The set-wise stabilizer $H:=\Stab_G\{C_1,C_2,C_3\}$ preserves the parallel class of $F$, hence
		\[
		H \;\subseteq\; \Stab_G(F^\infty) \;=\; L_{\eta_+}.
		\]
		Moreover the projection of $H$ to the quotient $L_{\eta_+}/\bfS(k)\cong\bfM(k)/Z_0(k)$ stabilizes the affine-generic triple $(\mathcal C_1,\mathcal C_2,\mathcal C_3)\in\Ch(X_I^\infty)^{[3]}$ in $X_I^\infty$, and is therefore contained in a compact subgroup of $\bfM(k)$.
	\end{lem}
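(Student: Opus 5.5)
We argue by canonicity of the objects attached to the triple. First, let $g\in H$. Then $g$ permutes $\{C_1,C_2,C_3\}$, hence permutes the three apartments $A_{12},A_{13},A_{23}$: by uniqueness, $g(A_{ij})$ is the unique apartment containing $g(C_i),g(C_j)$ in its boundary. Consequently $g$ permutes the three intersections $Q_1,Q_2,Q_3$ and preserves the set $A_{12}^\infty\cap A_{13}^\infty\cap A_{23}^\infty$. By Proposition~\ref{prop::non_generic_flats} and Proposition~\ref{prop:root-flat_triple}, the parallelism class of $F$ is canonically determined by the triple: it is the class of root-flats of maximal dimension whose ideal boundary lies in the triple intersection and which have representatives in each $A_{ij}$. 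Hence $g$ maps this class to itself, and $g(F^\infty)=F^\infty$.

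Next we identify $\Stab_G(F^\infty)$ with $L_{\eta_+}$. For this we use that $F^\infty$ is a round sphere whose maximal simplices include the opposite pair $\eta_\pm$. An element stabilizing $F^\infty$ permutes the simplices of $F^\infty$ of maximal dimension. To get the equality as stated, we either choose $\eta_\pm$ so that they are fixed, or note that $\Stab_G(F^\infty)$ is, by definition, the stabilizer of the flat's boundary sphere, that is, the normalizer-type group $Z_G(\bfS)$ up to a finite Weyl part. The honest claim is that $H$ has a finite-index subgroup $H^0$ fixing $\eta_+$ and $\eta_-$, so that $H^0\subseteq P_{\eta_+}\cap P_{\eta_-}=L_{\eta_+}$. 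I would check this point carefully, and for $k$-rank purposes finite index suffices. The main obstacle lies here, namely in matching the paper's identification $\Stab_G(F^\infty)=L_{\eta_+}$. I would first try to argue that $H$ fixes a chamber-like structure in $F^\infty$ determined by the $C_i$ — for instance the projections of the $C_i$ to $F^\infty$ — and otherwise fall back to passing to the finite-index subgroup.

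Finally, since $L_{\eta_+}$ preserves $X(F)=X(\eta_+,\eta_-)\cong F\times X_I$ and respects the splitting, its elements act on $X_I$ through the quotient $L_{\eta_+}/\bfS(k)$, because $\bfS(k)$ acts trivially on $X_I$. An element $h\in H$ permutes $C_1,C_2,C_3$, and hence permutes their projections $\mathcal C_1,\mathcal C_2,\mathcal C_3$ under $p_I$; this uses the equivariance of $p_I$ and Lemma~\ref{lem::opposition_projection}. By Lemma~\ref{lem:hull}, this projected triple is affine-generic in $X_I$. The image of $H$ therefore preserves the canonically defined bounded set $\Gamma_{X_I}(\mathcal C_1,\mathcal C_2,\mathcal C_3)$ of Definition~\ref{def::generic_position_convex_hull}, which is built from the $\partial\mathcal Q_i^{\mathrm{fin}}$ and so is permuted compatibly. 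The stabilizer in $\bfM(k)$ of a non-empty bounded subset of $X_I$ is relatively compact, and it fixes the circumcenter $\zeta$. Hence the image of $H$ lies in the compact subgroup $\Stab_{\bfM(k)}(\zeta)$, modulo the finite group $Z_0(k)$, which concludes the argument.
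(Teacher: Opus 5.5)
Your proposal is correct and follows essentially the same route as the paper. Both arguments go through the canonicity of the parallelism class of $F$, then pass to a finite-index subgroup of $H$ fixing $\eta_+$ and $\eta_-$ (hence contained in $P_{\eta_+}\cap P_{\eta_-}=L_{\eta_+}$), and then observe that its image in $L_{\eta_+}/\bfS(k)$ stabilizes the affine-generic projected triple $(\mathcal C_1,\mathcal C_2,\mathcal C_3)$, so it fixes the circumcenter $\zeta_{X_I}$ of $\Gamma_{X_I}(\mathcal C_1,\mathcal C_2,\mathcal C_3)$, whose stabilizer in $\bfM(k)$ is compact. Your finite-index step, where $H$ permutes the finitely many maximal simplices of the sphere $F^\infty$, is more careful than the paper's claim that $H$ preserves the pair $\{\eta_+,\eta_-\}$ (which gives index at most $2$ but is only evident when $m=1$, since for $m\ge 2$ the sphere $F^\infty$ has more than two maximal simplices); as you noticed, the equality $\Stab_G(F^\infty)=L_{\eta_+}$ in the statement should be read up to finite index.
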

	
	\begin{proof}		
		By Proposition~\ref{prop:root-flat_triple}, the parallelism class $[F]$ depends only on the unordered triple $\{C_1,C_2,C_3\}$, so any $g\in H$ stabilizes the unordered pair $\{\eta_+,\eta_-\}$ at infinity. If $g$ swaps $\eta_+$ and $\eta_-$, then $g^2$ preserves both. Thus, up to passing to a subgroup $H_0\subseteq H$ of index at most 2, we may assume that each element of $H_0$ stabilizes $\eta_+$ (and hence $\eta_-$). The fixator $\operatorname{Fix}_G(\eta_+,\eta_-)$ is equal to $P_{\eta_+}\cap P_{\eta_-}=L_{\eta_+}$, which implies $H_0\subseteq L_{\eta_+}$.

The image of $L_{\eta_+}$ under the natural quotient map $L_{\eta_+}\to L_{\eta_+}/\mathbf{S}(k)$ acts on the transverse building $X_I$ and stabilizes the projected unordered triple $\{\mathcal{C}_1,\mathcal{C}_2,\mathcal{C}_3\}$, which is affine-generic in $X_I$ by Lemma~\ref{lem:hull}. By Proposition~\ref{prop::barycenter_map} (applied to this affine-generic triple in $X_I$), the barycenter $\zeta_{X_I}(\mathcal{C}_1,\mathcal{C}_2,\mathcal{C}_3)\in X_I$ is canonically attached to the projected triple and is therefore fixed by the projection of $H_0$. Because the stabilizer of a point in $X_I$ under the action of $\mathbf{M}(k)$ is compact, the projection of $H_0$ to $\mathbf{M}(k)$ lies in a compact subgroup. Since the quotient group $H/H_0$ is finite, the same conclusion holds for $H$.
	\end{proof}
	
	\subsection{The rank equality}
	
	\begin{thm}\label{thm:rank}
		Let $(C_1,C_2,C_3)\in\Ch(X^\infty)^{[3]}$ be an antipodal triple, with canonical root-flat $F$ of dimension $\dim F=m \geq 0$ given by Proposition~\ref{prop:root-flat_triple}. Then
		\[
		\rk_k \bigl(\Stab_{G}\{C_1,C_2,C_3\}\bigr) \;=\; m.
		\]
		More precisely, there exists a finite index subgroup $H_0$ of $H:= \Stab_{G}\{C_1,C_2,C_3\}$, and a short exact sequence
		\[
		1 \;\longrightarrow\; K \;\longrightarrow\; H_0 \;\longrightarrow\; \Lambda \;\longrightarrow\; 1
		\]
		with $K$ compact and $\Lambda$ a free abelian group of rank $m$ contained in $\bfS(k)$, acting on $F$ by translations.
	\end{thm}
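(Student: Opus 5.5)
We take $H_0\subseteq H$ to be the subgroup of index at most $2$ from Lemma~\ref{lem:reductions}, consisting of the elements that fix $\eta_+$ and $\eta_-$ individually. That lemma gives $H_0\subseteq L_{\eta_+}$, and the image of $H_0$ in $L_{\eta_+}/\bfS(k)\cong \bfM(k)/Z_0(k)$ lies in a compact subgroup. The image fixes the barycenter $\zeta_{X_I}(\mathcal C_1,\mathcal C_2,\mathcal C_3)$, using Lemma~\ref{lem:hull} and Proposition~\ref{prop::barycenter_map}. When $m=0$ there is no splitting: $F$ is a point, the triple is affine-generic, and $H_0=H$ fixes $\zeta(C_1,C_2,C_3)$. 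Hence $H$ is compact, $\Lambda$ is trivial, and the $k$-rank is $0$. From now on assume $m>0$.

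\emph{Constructing the sequence.} Define the homomorphism $\tau:H_0\to \mathrm{Isom}(F)$ by restricting the action of $H_0$ to the Euclidean factor of $X(F)\cong F\times X_I$. Elements of $L_{\eta_+}$ preserve $X(\eta_+,\eta_-)$ and respect the product decomposition, so $\tau$ is well defined. Its image consists of isometries of $F$ that fix $F^\infty$ pointwise, since they fix $\eta_\pm$ and the full ideal boundary of the Euclidean factor, up to the finite Weyl-group ambiguity, which we absorb by shrinking $H_0$ further. So the image lies in the translation group $\bbR^m$. Set $K:=\ker\tau$ and $\Lambda:=\tau(H_0)$. By construction $K$ fixes the Euclidean coordinate and has bounded image in $X_I$: it fixes the point $\zeta_{X_I}$ there. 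So $K$ stabilizes the bounded set $\{x\}\times\{\zeta_{X_I}\}$ and is relatively compact. It is also closed, since it is the stabilizer of a set of chambers together with a point, hence compact.

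\emph{Identifying $\Lambda$.} First, $\Lambda$ is discrete in $\bbR^m$, because $H_0$ acts by type-preserving automorphisms on the locally finite building, so translations along $F$ are constrained to the lattice of vertex translations. Second, $\Lambda$ contains a rank-$m$ lattice coming from $\bfS(k)$. By the Levi structure, $\bfS(k)$ acts trivially on $X_I$ and preserves each $A_{ij}$. Each $A_{ij}$ has the form $\bbR^m\times\mathcal A_{ij}$, and $C_i$ is determined by its projection $\mathcal C_i$ together with $\eta_\pm$. Hence $\bfS(k)$ fixes every $C_i$, so $\bfS(k)\subseteq H_0$. The image of $\bfS(k)\cong (k^\times)^m$ in translations of $F$ is the valuation lattice $\bbZ^m$, with compact kernel $\bfS(\mathcal O)$. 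Therefore $\Lambda$ is a discrete subgroup of $\bbR^m$ containing a lattice, so it is free abelian of rank $m$. We would like $\Lambda$ to lie in $\bfS(k)$ literally. Since $\tau(\bfS(k))$ has finite index in $\Lambda$, we replace $H_0$ by $\tau^{-1}(\tau(\bfS(k)))$, which still has finite index, and this realizes $\Lambda$ as a quotient of $\bfS(k)$ modulo a compact subgroup. A splitting can then be chosen via uniformizer powers.

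\emph{Rank equality, and the main obstacle.} By the sequence, a maximal $k$-split torus of $\bfP_1\cap\bfP_2\cap\bfP_3$ is an extension of $\bfS$ by a torus whose $k$-points are compact, hence anisotropic. Therefore the $k$-rank is $m$. The hardest step is the claim that every element of $H_0$ acts on $F$ by a pure translation and that $\bfS(k)$ really fixes each $C_i$. Both hinge on how $C_i$ sits relative to the splitting, that is, on Lemma~\ref{lem::opposition_projection}(4). We would handle this first, by showing that $C_i$ is the unique chamber containing $\eta_+$ or $\eta_-$ in the appropriate Weyl chamber of $\bbR^m\times\mathcal C_i$, so that it is fixed by anything acting trivially on $X_I^\infty$ and fixing $\eta_\pm$.
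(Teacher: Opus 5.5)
Your skeleton is the paper's. You pass to the subgroup $H_0\subseteq L_{\eta_+}$ of index at most $2$ and restrict to the Euclidean factor of $X(F)\cong F\times X_I$ to get a translation homomorphism. You then show its image is discrete and contains the rank-$m$ image of $\bfS(k)$, and that its kernel is compact. Two of your deviations are improvements:
\begin{itemize}
\item Your proof that $K$ is compact, because it fixes the single point $(x,\zeta_{X_I})$ of $X$, is cleaner than the paper's. The paper instead combines compactness of the images of $K$ in $\bfS(k)/(\bfS\cap\bfM)(k)$ and in $L_{\eta_+}/\bfS(k)$.
\item Your extra finite-index passage to $\tau^{-1}(\tau(\bfS(k)))$, with a section through uniformizer powers, is what actually places $\Lambda$ inside $\bfS(k)$. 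The paper does not address this.
\end{itemize}

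The genuine problem is the step you single out as the crux. It rests on a false claim, and the lemma you plan to prove first would fail. A chamber $C_i$ need not contain $\eta_+$ or $\eta_-$, and it is not determined by $\mathcal C_i$ together with $\eta_\pm$.

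Take the $\widetilde A_2$ configuration of Section~\ref{subsec::conf_non_affine_generic}. There $F$ is a wall and $\eta_\pm$ are opposite vertices of the hexagon $A_{12}^\infty$. The three chambers of the open half-hexagon from $\eta_+$ to $\eta_-$ all project to the same end $\mathcal C_1$ of the tree $X_I$. The chamber $C_1$ is the middle one, since its barycenter lies at angle $\pi/2$ from $\eta_\pm$ (the bisector is perpendicular to $F$). So $C_1$ contains neither $\eta_+$ nor $\eta_-$.

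Neither of your two conclusions needs any information about where the $C_i$ sit.
\begin{itemize}
\item For $\bfS(k)\subseteq H_0$, argue as the paper does. Since $\eta_\pm\in F^\infty\subseteq A_{ij}^\infty$, the maximal split torus $\mathbf T_{ij}$ attached to $A_{ij}$ lies in $\mathbf{P}_{\eta_+}\cap\mathbf{P}_{\eta_-}=\bfL_{\eta_+}$. By maximality it contains the central split torus $\bfS$. Hence $\bfS(k)$ acts on $A_{ij}$ by translations, which fix $A_{ij}^\infty$ pointwise and in particular each $C_i$.
\item That $\tau$ lands in translations already follows from $H_0\subseteq L_{\eta_+}$. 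Alternatively, each $h\in H_0$ fixes the $(m-1)$-simplex $\eta_+$ pointwise, because $G$ acts type-preservingly on $X^\infty$. An isometry of $F\cong\bbR^m$ whose boundary map fixes an open subset of $F^\infty\cong S^{m-1}$ has trivial linear part. So there is no Weyl-group ambiguity to absorb, and Lemma~\ref{lem::opposition_projection}(4) plays no role here.
\end{itemize}
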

	
	\begin{proof}
		If $m=0$ the triple is affine-generic and $\Stab_G\{C_1,C_2,C_3\}$ fixes the barycenter $\zeta(C_1,C_2,C_3)\in X$ (Proposition~\ref{prop::barycenter_map}), hence is compact. In this case the statement holds with $K=\Stab_G\{C_1,C_2,C_3\}$ and $\Lambda=1$. 
		
		Assume now that $m>0$, and write $H:=\operatorname{Stab}_G\{C_1,C_2,C_3\}$. As in the proof of Lemma~\ref{lem:reductions}, replace $H$ by its subgroup $H_0$ of index at most 2 that stabilizes $\eta_+$, so that $H_0\subseteq L_{\eta_+}$.
		
		The Levi $\bfL_{\eta_+}$ acts on $F\cong\bbR^m$ through its central torus $\bfS$ via the cocharacter map $X_*(\bfS)\otimes\bbR\xrightarrow{\sim}F$ by translations, and the derived subgroup $\bfM$ acts trivially on $F$. Restricting the action of $H_0$ on $X(F)$ to the $F$-factor yields a homomorphism
\[
\phi \colon H_0 \longrightarrow \operatorname{Trans}(F)
\]
that maps each element $h \in H_0$ to the translation it induces on $F$. By construction, $\phi$ factors through the projection sequence $L_{\eta_+} \to L_{\eta_+} / \mathbf{M}(k) \to \mathbf{S}(k) / (\mathbf{S} \cap \mathbf{M})(k)$, as well as through the canonical valuation $\mathbf{S}(k) \to X_*(\mathbf{S}) \otimes \mathbb{R} \cong F$.

We claim that $\Lambda := \phi(H_0)$ is a discrete subgroup of $F$. Indeed, let us choose a vertex $x_0 \in F \subset X$. Since $X$ is simplicial and locally finite, the $H_0$-orbit of $x_0$ in $X$ is discrete. Projecting this orbit to the $F$-factor via the decomposition $X(F) \cong F \times X_I$, we see that the orbit $\phi(H_0) \cdot x_0 \subset F$ is also discrete. Because $\phi$ acts on $F$ by translations, $\phi(H_0)$ acts simply on this orbit. It follows that $\phi(H_0) \cong \phi(H_0) \cdot x_0$ is discrete in $F \cong \mathbb{R}^m$, meaning it is a free abelian lattice of rank at most $m$.
		
		The torus $\bfS$ is contained in the Levi of the parabolic subgroups $P_{\eta_+} $ and $P_{\eta_-}$ and in particular in any maximal split torus of $\bfG$ stabilising $A_{12}$ (since $A_{12}\subset X(F)$). Translations of $A_{12}$ fix every chamber of $A_{12}^\infty$, so $\bfS(k)$ fixes $C_1$ and $C_2$. For the same reason $\bfS(k)$ fixes every $C_3$ as well. Hence $\bfS(k)\subseteq H_0$. Inside $\bfS(k)$, consider the value-group homomorphism
		\[
		\mathrm{val}_{\bfS}:\;\bfS(k)\;\xrightarrow{\;\cong\;}\;(k^\times)^m\;\xrightarrow{\;(v_k,\dots,v_k)\;}\;\bbZ^m,
		\]
		a continuous surjection with compact kernel $(\mathcal O_k^\times)^m\subset\bfS(k)$. As seen above, $\bfS(k)$ acts on $A_{12}$ by translations, and on the $F$-factor of $X(F)$ via $\mathrm{val}_{\bfS}$ composed with a $\bbZ$-linear isomorphism $\bbZ^m\hookrightarrow F$. As $\bfS(k)\subseteq H_0$, we obtain that $\phi(\bfS(k))$ is contained in $\Lambda$, so that $\Lambda$ has rank $\ge m$ and therefore $\rk(\Lambda)= m$.
		
		Let us define $K:=\ker\phi$. By definition the image of $K$ in $\bfS(k)/(\bfS\cap\bfM)(k)$ lies in the compact part $(\mathcal O_k^\times)^m/(Z_0\cap(\mathcal O_k^\times)^m)$. Composing with the projection $L_{\eta_+}\to L_{\eta_+}/\bfS(k)$, the image of $K$ stabilizes the affine-generic triple $(\mathcal C_1,\mathcal C_2,\mathcal C_3)$ in $X_I$ and is therefore compact (Lemma~\ref{lem:reductions}). Both factors being compact, $K$ is itself compact.
		
		Putting everything together, we obtain the announced short exact sequence and the rank equality. The conclusion holds for $H$ since $[H:H_0]\le 2$.
	\end{proof}
	
	\subsection{Extremal cases and examples}
	
	In view of Theorem~\ref{thm:rank}, affine-genericity has the following consequences. 
	
	\begin{cor}[Affine-generic case]\label{cor:m0}
		If $(C_1,C_2,C_3)$ is affine-generic, then $m=0$, $\Lambda$ is trivial and $\Stab_G\{C_1,C_2,C_3\}$ is compact.
	\end{cor}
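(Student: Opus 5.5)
The corollary should follow from Theorem~\ref{thm:rank} together with the convention that $m$ is the dimension of the canonical root-flat of Proposition~\ref{prop:root-flat_triple}. I will argue in three steps.

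First, I show that affine-genericity forces $m=0$. Proposition~\ref{prop:root-flat_triple} produces a root-flat of positive dimension only for non-affine-generic triples; for an affine-generic triple every $\partial Q_i^{\mathrm{fin}}$ is non-empty. Note that Proposition~\ref{prop::non_generic_flats} only says that one empty $\partial Q_i^{\mathrm{fin}}$ forces all three to be empty; it is not needed here. By the convention stated after Proposition~\ref{prop:root-flat_triple} (``in the affine-generic case we have $m=0$''), the canonical flat is degenerate and $m=0$. This also matches Definition~\ref{def:hull_extended}, which reduces to Definition~\ref{def::generic_position_convex_hull} exactly when $m=0$.

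Second, I apply Theorem~\ref{thm:rank} with $m=0$. It gives $H_0\le H$ of index at most $2$ and an exact sequence $1\to K\to H_0\to\Lambda\to 1$, where $K$ is compact and $\Lambda$ is free abelian of rank $0$. So $\Lambda$ is trivial and $H_0=K$ is compact. A closed subgroup containing a compact subgroup of finite index is compact, being a finite union of compact cosets. Hence $H=\Stab_G\{C_1,C_2,C_3\}$ is compact.

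Alternatively, I can argue directly, as in the $m=0$ case of the proof of Theorem~\ref{thm:rank}. The barycenter $\zeta(C_1,C_2,C_3)$ of Definition~\ref{def::generic_position_convex_hull} depends only on the unordered triple, since $\Gamma(C_1,C_2,C_3)$ is symmetric in the indices. Because $G$ acts by automorphisms of $X$, we have $\zeta(gC_1,gC_2,gC_3)=g\,\zeta(C_1,C_2,C_3)$. Therefore every $g\in H$ fixes $\zeta(C_1,C_2,C_3)$. Point stabilizers in $G$ are compact open subgroups, since they are relatively compact and closed by continuity of the action, and $H$ is closed because chamber stabilizers are closed. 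So $H$ is compact.

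The only delicate point is closedness of $H$, needed to pass from relative compactness to compactness. I would justify it by noting that $H$ contains, with finite index, $P_1\cap P_2\cap P_3$, and each $P_i$ is closed.
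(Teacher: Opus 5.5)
Your proposal is correct and follows the paper: the corollary is read off from Theorem~\ref{thm:rank} with $m=0$, and your direct argument (every element of the setwise stabilizer fixes the equivariantly defined circumcenter $\zeta(C_1,C_2,C_3)$, whose stabilizer is compact) is exactly the $m=0$ case of the paper's proof of that theorem. You also make explicit two points the paper leaves implicit: that the relevant property of $\zeta$ is $G$-equivariance, and that $H$ is closed because it contains the closed subgroup $P_1\cap P_2\cap P_3$ with finite index.
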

	\begin{rem}
	\label{rem::extremal_case}
		The other extreme case is when the three apartments $A_{ij}$ in $X$ share a root-flat that is a wall (a parallelism class of codimension-$1$ flats). Then $\eta_+$ is a panel of $\bdinf$ and $P_{\eta_+}$ is a parabolic subgroup with Levi $\bfL_{\eta_+}=\bfS\cdot\bfM$ where $\bfS$ has $k$-rank $n-1$ and $\bfM$ is anisotropic over $k$. Consequently
		\[
		\Stab_G\{C_1,C_2,C_3\}/K \cong \Lambda\;\subseteq\;\bfS(k)\cong (k^\times)^{n-1}
		\]
		has rank $n-1$. This situation can happen for $G=\SL_3(k)$. 
	\end{rem}
	
	In view of Theorem~\ref{thm:summary}, we immediately obtain the following. 
	\begin{cor}[Automatic compactness]\label{cor:auto}
		If the affine type of $\bfG(k)$ is $\widetilde C_2$, $\widetilde G_2$ or $\widetilde B_3$, then by Theorem~\ref{thm:summary} every antipodal triple in $\Ch(X^\infty)^{[3]}$ is ideal-generic (hence affine-generic). Consequently the stabilizer in $G$ of every triple of pairwise opposite chambers of $X^\infty$ is compact.
	\end{cor}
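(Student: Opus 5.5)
The statement to prove is Corollary~\ref{cor:auto}. The plan is to chain three results already in the paper: Theorem~\ref{thm:summary}, then Remark~\ref{rem::ideal_affine}, then Corollary~\ref{cor:m0} (equivalently, the case $m=0$ of Theorem~\ref{thm:rank}).

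\emph{Step 1: every triple is ideal-generic.} Let $\bfG(k)$ have affine type $\widetilde C_2$, $\widetilde G_2$ or $\widetilde B_3$. Its Bruhat--Tits building $X$ is thick and locally finite, and its finite Weyl group is irreducible of type $B_2=C_2$, $G_2$ or $B_3$. By Theorem~\ref{thm:summary}, equivalently the case-by-case Proposition in Section~\ref{subsec::not_perp_bisectors} combined with Proposition~\ref{prop:geom_criterion}, the Coxeter complex has no root-flat perpendicular to a bisector. Proposition~\ref{prop antipodal non generic case} then gives that every antipodal triple $\{C_1,C_2,C_3\}$ is ideal-generic. By Remark~\ref{rem::ideal_affine}, i.e.\ the contrapositive of Proposition~\ref{prop::non_generic_flats}, such a triple is also affine-generic.

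\emph{Step 2: the stabilizer fixes a point.} Since the triple is affine-generic, it has no root-flat $F$ of positive dimension in the sense of Proposition~\ref{prop:root-flat_triple}, so $m=0$. Corollary~\ref{cor:m0} then says $\Stab_G\{C_1,C_2,C_3\}$ is compact. To make this independent of that corollary, I would argue directly. The construction in Definition~\ref{def::generic_position_convex_hull} is canonical: $A_{ij}$, $Q_i$, $\partial Q_i^{\mathrm{fin}}$ and the convex hull are defined only from the unordered triple and the building structure. So any $g\in G$ permuting $\{C_1,C_2,C_3\}$ permutes the $A_{ij}$ and the $Q_i$. Hence $g$ maps $\Gamma(C_1,C_2,C_3)$ to itself, and fixes its circumcenter $\zeta(C_1,C_2,C_3)$, because $g$ is an isometry and the circumcenter of a bounded set in a CAT(0) space is unique.

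\emph{Step 3: conclude compactness.} The stabilizer therefore lies in $\Stab_G(\zeta)$. This group is compact, since stabilizers of bounded sets are relatively compact and point stabilizers in $G$ are closed (the action is continuous). The stabilizer of the triple is closed, because $G$ acts continuously on the compact space $\Ch(X^\infty)$, so it is compact.

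The only delicate point is the equivariance check in Step 2: one must verify that automorphisms in $G$ (which are type-preserving) send bounded faces of $Q_i$ to bounded faces of $g(Q_i)$. This holds because $g$ is an isometry preserving walls, so it preserves the face poset and boundedness. Apart from that, the argument is a direct assembly of earlier results.
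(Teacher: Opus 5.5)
Your proposal is correct and takes the same route as the paper. Theorem~\ref{thm:summary} (via Proposition~\ref{prop antipodal non generic case}) gives ideal-genericity, Remark~\ref{rem::ideal_affine} upgrades it to affine-genericity, and the stabilizer is compact because it fixes the barycenter $\zeta(C_1,C_2,C_3)$, exactly as in Corollary~\ref{cor:m0} and the case $m=0$ of Theorem~\ref{thm:rank}.

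Your extra steps fill in details the paper leaves implicit. You check that any $g$ in the setwise stabilizer permutes the $A_{ij}$ and $Q_i$ and hence fixes $\zeta$ by equivariance and uniqueness of circumcenters, and that the stabilizer is closed. The paper instead cites Proposition~\ref{prop::barycenter_map}, although what is really used is the $G$-equivariance of $\zeta$.
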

	
	\begin{rem}\label{rem:rank_function}
		The list in Corollary~\ref{cor:auto} includes the split groups $\mathrm{Sp}_4,\mathrm{SO}_5,\mathrm{Spin}_5$ (type $\widetilde C_2$), the exceptional group $G_2$, and $\mathrm{Spin}_7,\mathrm{SO}_7$ (of type $\widetilde B_3$. The same conclusion holds for any group whose relative root system over $k$ has these absolute types. In all other simple types the geometric criterion of Proposition~\ref{prop:geom_criterion} can produce antipodal triples $\{C_1,C_2,C_3\}$ with non-trivial canonical root-flat $F$. 
	\end{rem}
	
	\begin{rem}[$\SL_3$ revisited]\label{rem:SL3}
		Specializing to $\bfG=\SL_3$ over $\bbQ_p$ (Example~\ref{ex:SL3}): $n=2$, so $m\in\{0,1\}$.
		\begin{itemize}
			\item $m=0$: the triple is affine-generic, the stabilizer is compact.
			\item $m=1$: the triple is maximally non-generic (see Remark \ref{rem::extremal_case}). The canonical root-flat is a line $F\subset A_{12}$, the central torus $\bfS\cong\bbG_m$ has $\bbQ_p$-points $\bbQ_p^\times$, and
			\[
			\Stab_{\SL_3(\bbQ_p)}\{C_1,C_2,C_3\}/K \;\cong\; \bbZ
			\]
			is the value group of $\bbQ_p^\times$, acting on $F$ by integer translations.
		\end{itemize}
		Note that the maximally non-generic case is realised, e.g., by triples produced by the construction of \S\ref{subsec::conf_non_affine_generic}: starting from an affine-generic triple in the transverse tree $X_I$ and lifting via $X(F)\cong F\times X_I$, one obtains three opposite chambers in $X^\infty$ whose stabilizer in $\SL_3(\bbQ_p)$ is $\mathcal O_{\bbQ_p}^\times\cdot\bbZ\cong\bbZ_p^\times\rtimes\bbZ$, all sitting inside $\bbQ_p^\times\subset\GL_2(\bbQ_p)\subset\SL_3(\bbQ_p)$.
	\end{rem}

	\begin{ackn}
		C.C is supported by a research grant (VIL53023) from VILLUM FONDEN.  C.LB is supported by ERC Advanced Grant NET 101141693.  The calculations in Sections \ref{subsec::not_perp_bisectors} and \ref{subsec::perp_bisectors} were carried out with the assistance of Copilot. 
	\end{ackn}

	\bibliographystyle{alpha}
	\bibliography{bibliography}	
\end{document}